\newtheorem{theorem}{Theorem}[section]
\newtheorem{lemma}[theorem]{Lemma}
\newtheorem{corollary}[theorem]{Corollary}
\newtheorem{proposition}[theorem]{Proposition}
\newtheorem{remark}{Remark}
\theoremstyle{definition}
\newtheorem{definition}[theorem]{Definition}
\numberwithin{equation}{section}
\newcommand{\N}{\mathbb{N}}
\newcommand{\Z}{\mathbb{Z}}
\newcommand{\R}{\mathbb{R}}
\newcommand{\C}{\mathbb{C}}
\newcommand{\Hess}{\textnormal{Hess}}
\newcommand{\signal}{\textnormal{signal}}
\newcommand{\tatop}[2]{\genfrac{}{}{0pt}{10}{#1}{#2}}
\begin{document}

\title[Polynomials of degree $4$ and real Jacobian conjecture]{On polynomial submersions of degree $4$ and the real Jacobian conjecture in $\R^2$}

\author{Francisco Braun}
\address[F. Braun and B. Or\'{e}fice-Okamoto]
{Departamento de Matem\'{a}tica, Universidade Federal de S\~{a}o Carlos \newline 
 Rod. Washington Lu\'{i}s, Km 235 - C.P. 676 - 13565-905 S\~{a}o Carlos, SP - Brasil}%
\author{Bruna Or\'{e}fice-Okamoto}
\email[F. Braun]{franciscobraun@dm.ufscar.br}%
\email[B. Or\'{e}fice-Okamoto]{bruna@dm.ufscar.br}

\thanks{The second author was partially supported by FAPESP grants 2011/08877-3 and 2013/14014-3.}

\thanks{This paper is in final form and no version of it will be submitted for
publication elsewhere.}
\date{May 5, 2014.}
\subjclass[2010]{Primary: 14R15; Secondary: 26C10, 53C12.}

\keywords{Real Jacobian conjecture, global injectivity, positive polynomials, half-Reeb components}

\begin{abstract}
The main result of this paper is the following version of the real Jacobian conjecture: ``Let $F=(p,q):\R^2\to\R^2$ be a polynomial map with nowhere zero Jacobian determinant. If the degree of $p$ is less than or equal to $4$, then $F$ is injective''. 
Assume that two polynomial maps from $\R^2$ to $\R$ are equivalent when they are the same up to affine changes of coordinates in the source and in the target. 
We completely classify the polynomial submersions of degree $4$ with at least one disconnected level set up to this equivalence, obtaining four classes. 
Then, analysing the half-Reeb components of the foliation induced by a representative $p$ of each of these classes, we prove there is not a polynomial $q$ such that the Jacobian determinant of the map $(p,q)$ is nowhere zero. 
Recalling that the real Jacobian conjecture is true for maps $F=(p,q)$ when all the level sets of $p$ are connected, we conclude the proof of the main result.
\end{abstract}

\maketitle

\section{Introduction}

Let $F=(p,q):\R^2\to\R^2$ be a polynomial map such that its Jacobian determinant, $\det DF$, is nowhere zero in $\R^2$. 
By the inverse function theorem, $F$ is locally injective.  
The \emph{real Jacobian conjecture} asserts that $F$ is globally injective. 
This is closely related to the famous \emph{Jacobian conjecture},  which claims that given $K$ a field of characteristic zero, any polynomial map from $K^n$ to $K^n$ such that its Jacobian determinant is equal to $1$  is injective. 
Jacobian conjecture was stated at the first time in 1939 by Keller (\cite{Kel}) and up to now is open if $n\geq 2$. 
We refer to \cite{Ess} for further informations on Jacobian conjecture.

The real Jacobian conjecture is nevertheless not true: in 1994, Pinchuk constructed, in \cite{Pi}, a non injective polynomial map $F=(p,q)$ such that $\det DF\neq 0$ in $\R^2$. 
In this example, the polynomials $p$ and $q$ have high  degrees: $p$ has degree $10$ and $q$ has degree $40$. 
Since the injectivity of $F$ is clear when the degree  of $F$ is one (in this case $F$ is an affine change of coordinates), it is natural to ask what would be the highest degree of $p$ or $q$ guaranteeing the global injectivity of $F$. 

In this direction,  Gwo\'{z}dziewicz proved, in  \cite{Gw}, that if the degrees of $p$ and $q$ are less than or equal to $3$ then $F$ is injective. In \cite{BS}, Braun and Santos generalized  Gwo\'{z}dziewicz result proving that the injectivity of $F$ is true provided just the degree of $p$ being less than or equal to $3$. 
Thus a natural question is: what is the maximum degree of $p$, between $3$ and $9$, in order that  the injectivity of $F$ is necessary independently of the degree of $q$? As a partial answer to this question, we prove in this note the following result. 

\begin{theorem}\label{main}
Let $F=(p,q):\R^2\to\R^2$ be a polynomial map such that $\det DF\neq 0$ in $\R^2$. If the degree of $p$ is less than or equal to $4$, then $F$ is injective. 
\end{theorem}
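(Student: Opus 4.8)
The plan is to dispose of the easy range of degrees first and then reduce the whole problem to a finite, explicit case analysis. Since \cite{BS} already yields injectivity whenever $\deg p\le 3$, I would assume from the start that $\deg p=4$. The hypothesis $\det DF\neq 0$ forces the gradient $\nabla p=(p_x,p_y)$ to be nowhere zero, so $p$ is a polynomial submersion and its level sets $\{p=c\}$ constitute a regular foliation $\mathcal F(p)$ of $\R^2$. I would then split into two cases according to the level sets of $p$: if every level set of $p$ is connected, the quoted version of the conjecture delivers injectivity at once, so the entire difficulty is concentrated in the case where some level set of $p$ is disconnected. In that case the goal is not to prove injectivity directly, but the stronger assertion that no admissible partner $q$ exists at all; since we are assuming $\det DF\neq 0$, this turns the disconnected case into a contradiction and leaves only the connected case, hence injectivity.

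The first substantial step is the classification. Working modulo affine changes of coordinates in the source and in the target, I would normalise a degree-$4$ submersion $p$ possessing a disconnected level set. The topology of the level sets near infinity is governed by the asymptotic behaviour of the leaves of $\mathcal F(p)$, which is in turn controlled by the homogeneous parts of $p$; using affine transformations to simplify the leading form and then the lower-order terms, I expect to be funnelled into a short list of normal forms, matching the four classes announced in the abstract. Proving completeness of this list — that every degree-$4$ submersion with a disconnected level set is affinely equivalent to one of the four representatives and that no configuration has been missed — is where the case bookkeeping is heaviest.

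With four explicit representatives $p$ in hand, the final step is to exclude every partner $q$ for each of them. I would pass to the vector field $X_p=(-p_y,p_x)$, whose trajectories are exactly the leaves of $\mathcal F(p)$, compactify it following Poincar\'e, and read off from the singularities at infinity that a disconnected level set forces the foliation $\mathcal F(p)$ to contain a half-Reeb component. The decisive algebraic identity is that $\det D(p,q)$ equals, up to sign, the derivative $X_p(q)$ of $q$ along the leaves of $\mathcal F(p)$; consequently $\det D(p,q)\neq 0$ everywhere is equivalent to $q$ being strictly monotone along every leaf. A half-Reeb component carries a compact transverse edge bounded by two separatrices that escape to a common point at infinity, and I would argue that strict monotonicity of $q$ along the leaves crossing this edge is incompatible with the boundedness and continuity that the compact edge imposes, producing the required contradiction. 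Carrying out this half-Reeb analysis for each of the four foliations, and verifying that the obstruction is insensitive to the degree of $q$, is the principal obstacle of the whole argument; once it is in place, combining the three steps establishes Theorem \ref{main}.
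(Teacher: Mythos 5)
Your overall architecture coincides with the paper's: reduce to $\deg p=4$, invoke the connected-level-set case, classify the degree-$4$ submersions with a disconnected level set into the four normal forms, and then rule out any partner $q$ for each representative via half-Reeb components. The classification step is only sketched in your proposal, but the paper's route (normalising the quartic form via the Cima--Llibre classification and then deciding, with subresultant criteria, whether $\nabla p$ vanishes or all level sets are connected) is of the kind you describe, so I will not press that point.

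The genuine gap is in your final step. You propose to derive a contradiction directly from the geometry: ``strict monotonicity of $q$ along the leaves crossing this edge is incompatible with the boundedness and continuity that the compact edge imposes.'' No such incompatibility exists in general: Pinchuk's map $(p,q)$ has $\det D(p,q)\neq 0$ everywhere while $\mathscr{F}(p)$ necessarily contains half-Reeb components (its level sets are not all connected), so the mere presence of a hRc together with monotonicity of $q$ along leaves cannot produce a contradiction. The obstruction the paper actually uses is quantitative. For the representatives $y+xy^2+y^4$ and $y+a_{02}y^2+xy^3$ one shows that \emph{every} positive polynomial $h$ has $\int_{\mathcal{A}}h=\infty$ over the hRc $\mathcal{A}$, while Green's theorem bounds $\int_{B_n}\det D(p,q)$ uniformly by an integral over the compact edge; this is Proposition \ref{integral}. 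But for $y+x^2y^2$ and $y+a_{02}y^2+y^3+x^2y^2$ the relevant hRc has \emph{finite} area, so this integral argument fails (the paper notes explicitly that $\int_{\mathcal{A}}1<\infty$ there); it is only used to constrain the exponents of $q$, after which a separate algebraic argument is needed: substituting $y=\theta x^{-2}$ yields a one-variable polynomial $L(\theta)\geq 0$ of the special form $\sum_j b_j(2(j+1)\theta+2j+1)\theta^j$, and Lemma \ref{bruna} (proved via positivity of a family of Hankel-type determinants) forces $L\equiv 0$, contradicting $h(0,0)>0$. Without something playing the role of this second mechanism, your argument cannot close the two hardest cases, and as stated the geometric ``incompatibility'' you invoke would prove the full real Jacobian conjecture, which is false.
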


Since the assumption ``$\det DF\neq 0$ in $\R^2$'' gives that $q$ is strictly monotone along each connected component of a level set of $p$ (as the level sets of $q$ are transversal to the level sets of $p$), one strategy to show the injectivity of $F$ is to prove that the level sets of $p$ are all connected. 
Actually, since injective polynomial maps are bijections (see \cite{BBR}), this is also necessary to the injectivity of $F$.
Thus to prove Theorem \ref{main} it is enough to consider all the polynomial submersions $p$ with  disconnected level sets and to show that for these    there is not a polynomial $q$ such that $\det DF\neq 0$ in $\R^2$. Before continuing, we need the following  definition.

\begin{definition}\label{def:equi}
We say that two functions $p,q:\R^2\to\R$ are \emph{equivalent} if there exist an affine change of coordinates\footnote{By affine change of coordinates we mean a map of the form $\left(\begin{array}{c}
x\\
y
\end{array}\right)\mapsto\left(\begin{array}{cc}
a & b\\
c & d
\end{array}\right)\left(\begin{array}{c}
x\\
y
\end{array}\right)+\left(\begin{array}{c}
e\\
f
\end{array}\right)$, where $a,b,c,d,e,f\in\R$, with $ad-bc\neq0$.} 
$T:\R^2\to\R^2$ and constants $M,N\in\R$, with $M\neq 0$, such that $p(x,y)=Mq\circ T^{-1}(x,y)+N$.
\end{definition}

Let $p$ be a polynomial submersion. It is quite simple to show that  all its  level sets are connected if the degree of $p$ is less than or equal to $2$. 
If $p$ has degree $3$ and has at least one disconnected level set, one of the results of the above cited paper \cite{BS} shows that $p$  is equivalent to $y+xy^2$. 
In our next theorem, we give the classification in the case of degree $4$.

\begin{theorem}\label{Classification} 
If $p:\R^2\to \R$ is a polynomial submersion of degree $4$ which has at least one disconnected level set, then $p$ is equivalent to one of the following
\begin{enumerate}
\item\label{1} $p(x,y)=y+xy^2+y^4$,
\item\label{2} $p(x,y)=y+a_{02}y^2+xy^3$, with $a_{02}=0$ or $1$, 
\item\label{3} $p(x,y)=y+x^2y^2$,
\item\label{4} $p(x,y)=y+a_{02}y^2+y^3+x^2y^2$, with $a_{02}^2-3<0$.
\end{enumerate}
\end{theorem}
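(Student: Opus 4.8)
The plan is to pin down the leading degree-$4$ form of $p$ first and then push the submersion hypothesis down onto the lower-order coefficients. I begin with two consequences of $p$ being a submersion (no critical points). If the top homogeneous part $p_4$ were positive or negative definite, then $p$ would be proper and bounded on one side, hence would attain a global extremum and thus a critical point, a contradiction; so $p_4$ is indefinite and therefore has a real linear factor. Similarly, no regular level set can have a compact component, since such a component would bound a disc on which $p$ has an interior extremum. Using an affine change in the source together with a map $q\mapsto Mq+N$ in the target, I normalize $p(0,0)=0$; the remaining affine freedom will be spent on the leading form and then on the surviving low-order terms, and it will turn out that the linear part can always be brought to $y$.

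The conceptual core is to show that the presence of a disconnected level set forces $p_4$ to have a \emph{repeated} real linear factor. The mechanism is that the ends of the (smooth, non-compact) level curves run off to infinity along the directions given by the real linear factors of $p_4$; when all these factors are simple the foliation meets large circles transversally in a way that lets one follow leaves across the plane and conclude that every fibre is connected --- equivalently, a half-Reeb component (whose existence is what a disconnected fibre detects) can only form from two leaves asymptotic to a common point at infinity, i.e.\ from a tangency of the foliation with the line at infinity, i.e.\ from a multiple point at infinity. Granting this, $p_4$ has a real factor of multiplicity $\geq 2$, which after a linear change I take to be $y$, so $p_4=y^2Q(x,y)$ for a binary quadratic $Q$. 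I then use the residual linear symmetries together with the submersion condition to exclude the possibility that $Q$ contributes a second, distinct, non-$y$ real factor or is definite, reducing $Q$ up to scale to $y^2$, $xy$, or $x^2$; equivalently $p_4$ is equivalent to exactly one of $y^4$, $xy^3$, $x^2y^2$, which are the three coarse types in the statement.

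With the leading form fixed, the remainder is a finite normalization carried out case by case. In each case I write $p=y+\sum a_{ij}x^iy^j$ with the prescribed $p_4$, compute $p_x$ and $p_y$, and impose that they have no common real zero. Because $y^2\mid p_4$, the line $y=0$ is always a component of $\{p_x=0\}$, and on it $p_y$ reduces to the constant $1$, which already forces many intermediate coefficients to cancel; the residual torus then normalizes the survivors. For $p_4=y^4$ this yields (1); for $p_4=xy^3$ it yields (2), where the surviving quadratic coefficient is scaled to $a_{02}\in\{0,1\}$. When $p_4=x^2y^2$ the set $\{p_x=0\}$ acquires the extra component $x=0$, on which $p_y$ restricts to $\tfrac{d}{dy}p(0,y)$; the split is then governed by the restriction $p(0,y)$ to the $y$-axis, giving (3) when it reduces to $y$ and (4) when higher $y$-terms survive.

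The step I expect to be the main obstacle is twofold. Conceptually it is the implication ``disconnected fibre $\Rightarrow$ repeated factor of $p_4$'', the only place where the asymptotic geometry of the foliation (the half-Reeb component mechanism) genuinely enters and cannot be reduced to coefficient bookkeeping. Computationally it is the $x^2y^2$ case, where the submersion condition does not simply kill coefficients but persists as an inequality: I expect $a_{02}^2-3<0$ in (4) to appear exactly as the condition that the one-variable polynomial $p_y(0,y)=1+2a_{02}y+3y^2$ have negative discriminant, equivalently no real root, so that $\nabla p$ does not vanish on the line $x=0$.
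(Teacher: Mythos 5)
Your overall architecture (pin down the degree-$4$ form, then normalize the lower-order coefficients case by case) matches the paper's, which runs through the Cima--Llibre classification of quartic forms and filters each case. But the two steps you yourself flag as the main obstacles are precisely where the argument breaks, and in both places the proposed mechanism is not merely unproved but incorrect. The implication ``disconnected fibre $\Rightarrow$ repeated real factor of $p_4$'' cannot come from transversality at infinity: if $p_4$ had $r\geq 2$ distinct real linear factors, all simple, then the projective closure of every level curve $p^{-1}(c)$ would meet the line at infinity transversally at exactly those $r$ real points, so $p^{-1}(c)$ would have exactly $2r$ ends; since each component of a fibre of a submersion is a properly embedded line with two ends (compact components are excluded, as you note), \emph{every} fibre would then have exactly $r\geq 2$ components. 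So ``all factors simple'' does not yield connectedness by following leaves near large circles --- it would yield disconnectedness of every fibre, and the actual content of this step is that \emph{no submersion with such a leading form exists}. That nonexistence is the hard part (it is what the subresultant machinery of Section \ref{section_common_zeros} is built for, e.g.\ for the forms $x^4+6\mu x^2y^2\pm y^4$), and your proposal supplies no argument for it.

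The second reduction fails for a similar reason: the submersion condition does not exclude $p_4=y^2Q$ with $Q$ definite or with extra simple real factors. For instance $y+x^2(x^2+y^2)$ is a genuine submersion whose leading form has a definite cofactor; it is absent from the classification only because \emph{all its level sets are connected}, which requires the discriminant/connectivity criteria of Section \ref{section_level_sets}, not the nonvanishing of $\nabla p$. The same missing filter undermines your final step: $x+y^4$ is a submersion with leading form $y^4$ and connected fibres, not equivalent to $y+xy^2+y^4$, so for each admissible leading form you must still separate the (many) submersions with all fibres connected from the single normal form with a disconnected fibre --- imposing only ``$p_x,p_y$ have no common real zero'' cannot do this, and your premise that $y=0$ is a component of $\{p_x=0\}$ is false until the coefficients $a_{i0}$, $i\geq 1$, have already been removed. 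The one local claim that does check out is the origin of $a_{02}^2-3<0$ as the negativity of the discriminant of $p_y(0,y)=1+2a_{02}y+3y^2$.
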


For each case of Theorem \ref{Classification}, we will prove there is no  polynomial $q$ such that $\det D(p,q)$ is nowhere zero. 
As a consequence, we will obtain Theorem \ref{main}, since when the degree of $p$ is less than or equal to $3$, the injectivity was already proven in \cite{BS} (in that  paper, it was also shown there is no $q$ such that $\det D(y+xy^2,q)$ is nowhere zero).

The paper is organized as follows. In Section \ref{section_common_zeros}, we use subresultants of two polynomials to develop results to decide when two special polynomials in $\R[x,y]$ have common zeros. 
These special polynomials have the form of $\frac{\partial p}{\partial x}$ and $\frac{\partial p}{\partial y}$, where $p$ is a polynomials of degree $4$.
In Section \ref{section_level_sets}, we construct results to decide when a special polynomial submersion has all its level sets connected. 
Then in Section \ref{section_classification}, we use the results of sections \ref{section_common_zeros} and \ref{section_level_sets} to prove Theorem \ref{Classification}.

In Section \ref{quase_final}, we will consider the polynomials of Theorem \ref{Classification} and prove that for each of them there is not a polynomial $q(x,y)$ such that $\det D(p,q)(x,y)>0$, for all $(x,y)\in\R^2$. 
For the arguments we shall use the concept of half-Reeb component of a foliation. Then, in Section \ref{fim}, we depicted the proof of Theorem \ref{main}. 

The arguments in Section \ref{quase_final} are divided in two groups. In the first one, we study the polynomials \ref{1} and \ref{2} of Theorem \ref{Classification} and use techniques analogous to \cite{BS} to conclude the non existence of a polynomial $q$.
Now in the second group, when we analyse the polynomial \ref{3} and \ref{4}, these techniques no longer work. Thus we transform part of the problem in being able to decide when a special polynomial of one variable is positive. 
Namely, Lemma \ref{bruna} asserts that  $L(\theta)=\sum_{j=0}^{N}b_j\big(2(j+1)\theta +2j+1\big)\theta^j$ can not be a positive polynomial, if not identically zero. The proof of this lemma is in the Appendix.

\section{Common zeros of polynomials}\label{section_common_zeros}
Let us recall the concept of subresultants of polynomials.  Consider two polynomials $p(x),q(x)\in \C[x]$, 
\begin{align*}
p(x) & = a_nx^n+a_{n-1}x^{n-1}+\cdots+a_0, \\ 
q(x) & = b_mx^m+b_{m-1}x^{m-1}+\cdots+b_0,
\end{align*}
and take their Sylvester matrix written in the following form
$$
Syl(p,q,x)=\left(\begin{array}{cccccccc}
a_n    & a_{n-1} & \cdots     & \cdots  & a_0     &        &         &        \\
       & a_n     & a_{n-1}    & \cdots  & \cdots  & a_0    &         &        \\
       &         & \ddots     & \ddots  &         &        & \ddots  &        \\
       &         &            & a_n     & a_{n-1} & \cdots & \cdots  & a_0    \\
\cdots & \cdots  &            & \cdots  & \cdots  & \cdots & \cdots  & \cdots \\
       &         & \iddots    & \iddots &         &        & \iddots &        \\
       & b_m     & b_{m-1}    & \cdots  & \cdots  & b_0    &         &        \\
b_m    & b_{m-1} & \cdots     & \cdots  & b_0     &        &         &          
\end{array}\right)\hspace{-1cm}
\begin{array}{c}
\left.\begin{array}{c}\phantom{a_0}\\ \phantom{a_0}\\ \phantom{\ddots}\\ \phantom{a_0}
\end{array}\right\} m \textnormal{ rows} \\
\left.\begin{array}{c}\phantom{\cdots}\\ \phantom{\iddots}\\ \phantom{a_0}\\ \phantom{a_0}
\end{array}\right\} n \textnormal{ rows}
\end{array}.
$$
For each $k\in\left\{0,1,\ldots,[(n+m)/2]\right\}$, we define the \emph{$k$-subresultant} of $p$ and $q$, $R_k(p,q,x)$, by the determinant of the $(n+m-2k)\times (n+m-2k)$ matrix obtained 
when we delete the first and the latest $k$ columns and rows of $Syl(p,q,x)$.     The following result is classical but we include a proof of it for the sake of completeness. Our proof was based on \cite{Bo}.
\begin{lemma}\label{resultant}
Consider $p(x)$ and $q(x)$ as above, with $a_nb_m\neq 0$. Then $p$ and $q$ have exactly $k$ common roots (counting multiplicity) if and only if 
$$
R_0(p,q,x)=\cdots=R_{k-1}(p,q,x)=0,\ \ \ \ \ \ \ \ \ R_k(p,q,x)\neq 0.
$$  
\end{lemma}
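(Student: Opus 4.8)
The plan is to reinterpret each subresultant as the determinant of a Sylvester-type linear map and thereby reduce everything to the degree of $\gcd(p,q)$. First I would recast the statement: since we work over $\C$, the phrase ``exactly $k$ common roots counting multiplicity'' means precisely that $g:=\gcd(p,q)$ has degree $k$. Indeed, if $\alpha_i$ are the common roots with multiplicities $\mu_i$ in $p$ and $\nu_i$ in $q$, then $g=\prod_i (x-\alpha_i)^{\min(\mu_i,\nu_i)}$, so $\deg g=\sum_i\min(\mu_i,\nu_i)$ is exactly that count. Thus I will prove the equivalent assertion that $R_0(p,q,x)=\cdots=R_{k-1}(p,q,x)=0$ and $R_k(p,q,x)\neq0$ if and only if $\deg\gcd(p,q)=k$.

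The main tool is the linear map $\psi_k$ sending a pair $(U,V)$ of polynomials with $\deg U<m-k$ and $\deg V<n-k$ to the coefficients of degrees $k,k+1,\ldots,n+m-1-k$ of $Up+Vq$. Its domain and codomain both have dimension $n+m-2k$, and the matrix of $\psi_k$ in the monomial bases is, up to sign, exactly the submatrix of $Syl(p,q,x)$ obtained by deleting the first and last $k$ rows and columns: deleting rows restricts the admissible degrees of $U$ and $V$, while deleting columns discards the top and bottom $k$ output coefficients. Hence $R_k(p,q,x)=\pm\det\psi_k$, so $R_k(p,q,x)\neq0$ if and only if $\psi_k$ is injective. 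Because the degree constraints force $\deg(Up+Vq)\leq n+m-k-1$ automatically, the kernel of $\psi_k$ is
$$\ker\psi_k=\{(U,V):\deg U<m-k,\ \deg V<n-k,\ \deg(Up+Vq)<k\}.$$

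With $d:=\deg\gcd(p,q)$, I would write $p=g\tilde{p}$ and $q=g\tilde{q}$ with $\gcd(\tilde{p},\tilde{q})=1$, $\deg\tilde{p}=n-d$, $\deg\tilde{q}=m-d$, and argue the two ranges separately. For $k<d$, the pair $(U,V)=(\tilde{q},-\tilde{p})$ is nonzero, satisfies the degree bounds (since $m-d<m-k$ and $n-d<n-k$), and yields $Up+Vq=0$; hence $\ker\psi_k\neq0$ and $R_k(p,q,x)=0$. For $k=d$, suppose $(U,V)\in\ker\psi_d$; then $Up+Vq=g(U\tilde{p}+V\tilde{q})$ has degree $<d=\deg g$, which forces $U\tilde{p}+V\tilde{q}=0$. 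Coprimality of $\tilde{p}$ and $\tilde{q}$ gives $\tilde{q}\mid U$, but $\deg U<m-d=\deg\tilde{q}$ then forces $U=0$ and hence $V=0$. So $\ker\psi_d=0$ and $R_d(p,q,x)\neq0$. Together these establish the equivalence.

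The hard part will be the bookkeeping in the second paragraph: carefully matching the rows and columns deleted from the Sylvester matrix (in the particular layout drawn above, where the $q$-rows appear in reversed shift order) to the degree restrictions on $(U,V)$ and to the discarded output coefficients, so that $R_k(p,q,x)$ really is, up to sign, the determinant of $\psi_k$. Once this identification is secured, the algebraic heart of the argument — exhibiting the kernel element $(\tilde{q},-\tilde{p})$ when $k<d$, and using coprimality to kill the kernel when $k=d$ — is short and self-contained.
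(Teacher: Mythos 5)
Your proposal is correct, but it takes a genuinely different route from the paper. The paper argues by induction on the number of common roots: it first shows $R_0(p,q,x)=0$ if and only if $p$ and $q$ have a common root, by adding $\sum y^{m+n-k}C_k$ to the last column of $Syl(p,q,x)$ to write $R_0=f(y)p(y)+g(y)q(y)$, and then proves via further column operations the key identity $R_i(p_1,q_1,x)=R_{i+1}(p,q,x)$ when $p=(x-\alpha)p_1$ and $q=(x-\alpha)q_1$, which peels off common roots one at a time. You instead identify each $R_k$ in one shot as $\pm\det\psi_k$ for the B\'ezout-type map $(U,V)\mapsto Up+Vq$ (restricted to $\deg U<m-k$, $\deg V<n-k$ and projected onto the middle coefficients), and read off the answer from $\deg\gcd(p,q)$: the pair $(\tilde q,-\tilde p)$ gives a kernel element for $k<d$, and coprimality kills the kernel at $k=d$. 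Your kernel computation is sound, the equivalence between ``exactly $k$ common roots with multiplicity'' and $\deg\gcd(p,q)=k$ is the same convention the paper uses implicitly, and the row/column bookkeeping you flag as the delicate step does check out against the paper's layout (deleting the first and last $k$ rows keeps exactly the shifts $x^ip$, $i<m-k$, and $x^jq$, $j<n-k$; deleting the first and last $k$ columns keeps the coefficients of degrees $k$ through $n+m-1-k$). What your approach buys is a uniform, induction-free treatment that makes the role of $\gcd(p,q)$ explicit; what the paper's approach buys is a purely matrix-computational argument that never needs the linear-map reinterpretation, at the cost of a base case plus an inductive identity.
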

\begin{proof}
Let us denote by $C_j$ the $j$th column of $Syl(p,q,x)$, for $j=1,\ldots,m+n$. By substituting the last column $C_{m+n}$ by $C_{m+n}+\sum_{k=1}^{m+n-1}y^{m+n-k}C_k$, and calculating the determinant using the Laplace expansion on the last column, we get that
$R_0(p,q,x)=f(y)p(y)+g(y)q(y)$, where $f$ and $g$ are polynomials of degree less than or equal to $m-1$ and $n-1$, respectively. Since $R_0(p,q,x)$ is a complex number with does not depend on $y$, it follows that  $p$ and $q$ have a common root if and only if $R_0(p,q,x)=0$.

Now the lemma follows readily from the following assertion: \emph{If $p(x)=(x-\alpha)p_1(x)$ and $q(x)=(x-\alpha)q_1(x)$, $\alpha\in\C$, 
then $R_i(p_1,q_1,x)=R_{i+1}(p,q,x)$, for $i=0,1,\ldots$}. To prove the assertion, we make the following operations on the columns of $Syl(p_1,q_1,x)$: we change column $C_i$ by $C_i-\alpha C_{i-1}$ for $i=m+n-2,m+n-3,\ldots,2$.
Then we observe this is exactly the matrix  $Syl(p,q,x)$ without the first and the last rows and columns.     
\end{proof}

\begin{lemma}\label{Calculo_res}
Let $a,b,c,d,e:\R\to\R$ be functions. For 
$$
p(x,y)=y^2+a(x)y+b(x),\ \ \ \ \ \ q(x,y)=y^2+c(x)y+d(x),
$$
we have the following formulas
\begin{align*}
R_1(p,q,y)=     &c-a,\\
R_0(p,q,y)=     &-(d-b)^2+\big(a(d-b)-bR_1\big)R_1.
\end{align*}
Whereas for 
$$
p(x,y)=y^2+a(x)y+b(x),\ \ \ \ \ \ q(x,y)=y^3+c(x)y^2+d(x)y+e(x),
$$
we have 
\begin{align*}
R_1(p,q,y)=     &a^2-ac-b+d,\\
R_0(p,q,y)=     &-(ab-bc+e)^2+\big(a(ab-bc+e)-bR_1\big)R_1.
\end{align*}
\end{lemma}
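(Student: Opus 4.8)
The plan is to compute the four subresultants directly from the definition, being careful to use the flipped arrangement of the Sylvester matrix fixed above (the rows built from the second polynomial are listed from the bottom up). Each of $R_0$ and $R_1$ is the determinant of an explicit small matrix with entries in $\R[x]$, so the proof is in essence a determinant calculation; the only real choice is to organize the row reduction so that the stated factored forms appear, rather than a fully expanded polynomial that one would then have to refactor.

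First I would treat the pair of monic quadratics $p=y^2+ay+b$ and $q=y^2+cy+d$. Here $n=m=2$, so $Syl(p,q,y)$ is $4\times4$, and $R_1$ is the central $2\times2$ minor obtained by deleting the first and last rows and columns; with the chosen arrangement its two rows are $(1,a)$ and $(1,c)$, so $R_1=c-a$ immediately. For $R_0$ I would evaluate the full $4\times4$ determinant by subtracting the top $p$-row from the bottom $q$-row, which turns that row into $(0,\,c-a,\,d-b,\,0)$, then expanding along the first column and continuing to eliminate so that the factor $c-a=R_1$ is laid bare. Grouping the surviving terms according to the combination $d-b$ and the factor $R_1$ gives exactly $R_0=-(d-b)^2+\big(a(d-b)-bR_1\big)R_1$.

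The pair $p=y^2+ay+b$, $q=y^3+cy^2+dy+e$ is handled identically, with heavier bookkeeping. Now $n=2$, $m=3$, so $Syl$ is $5\times5$: its top three rows are shifts of $(1,a,b)$ and its bottom two are the flipped shifts of $(1,c,d,e)$. The subresultant $R_2$ is the single central entry $1$, a convenient check that the arrangement is correct, while $R_1$ is the central $3\times3$ minor with rows $(1,a,b)$, $(0,1,a)$, $(1,c,d)$; expanding along its first column yields $(d-ac)+(a^2-b)=a^2-ac-b+d$. For $R_0$ I would subtract the first row of $Syl$ from its last row and expand along the first column, reducing the problem to a $4\times4$ determinant; on that matrix I would subtract the first row from the third and $(c-a)$ times the first row from the fourth. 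After one more expansion along the first column, the two nonzero entries in the last row of the resulting $3\times3$ matrix are precisely $(d-b)-a(c-a)=a^2-ac-b+d=R_1$ and $e-b(c-a)=ab-bc+e$. Expanding along the last column then collects everything into $-(ab-bc+e)^2+(ea-bv)R_1$ with $v=d-b$, and since $a(ab-bc+e)-bR_1=ea-b(d-b)=ea-bv$, this is the asserted formula for $R_0$.

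The main obstacle is organizational rather than conceptual. In each $R_0$ computation the art lies in choosing the order of the row operations so that $R_1$ is factored out before the final expansion, so that the answer presents itself in the stated quadratic-in-$R_1$ shape; expanding the $4\times4$ or $5\times5$ determinant blindly produces a dense polynomial in $a,b,c,d,e$ in which the combinations $c-a$, $d-b$, $ab-bc+e$ and the coefficient $R_1$ are awkward to recover. Introducing the shorthands $u=c-a$ and $v=d-b$ at the outset keeps the intermediate matrices short and makes the one slightly nonobvious algebraic identity, $a(ab-bc+e)-bR_1=ea-bv$, immediate.
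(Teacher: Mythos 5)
Your computation is correct and is exactly the direct evaluation of the subresultant determinants that the paper dismisses with ``The proof is straightforward''; I checked both the $4\times4$ and $5\times5$ cases, including the identity $a(ab-bc+e)-bR_1=ea-b(d-b)$, and the signs are consistent with the paper's flipped arrangement of the Sylvester matrix. Nothing further is needed.
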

\begin{proof}
The proof is straightforward.
\end{proof}

\begin{lemma}\label{DerivadaPar}
Let $f,g,h,A,B:\R\to\R$ be smooth functions, and $\alpha\in\R$ be such that $h(\alpha)\neq 0$ and $A(\alpha)^2-4B(\alpha)<0$.
Define 
$$
R(x)=h(x)\Big(-f(x)^2+\big(A(x)f(x)-B(x)g(x)\big)g(x)\Big).
$$
If there is a non negative integer $i$ such that  $R^{(0)}(\alpha)=R^{(1)}(\alpha)=\cdots={R}^{(2i)}(\alpha)=0$, then ${R}^{(2i+1)}(\alpha)=0$.\footnote{Here $R^{(j)}$ stands for the $j$th derivative of $R$.}  
\end{lemma}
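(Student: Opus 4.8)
The plan is to recognize the bracketed factor of $R$ as a quadratic form in $(f,g)$ whose definiteness is governed precisely by the hypothesis $A(\alpha)^2-4B(\alpha)<0$, and then to argue that a smooth function of one real variable that keeps a constant sign near $\alpha$ cannot vanish there to odd order. First I would set $Q(x)=-f(x)^2+A(x)f(x)g(x)-B(x)g(x)^2$, so that $R=hQ$, and view $Q(x)$ as the value at $(f(x),g(x))$ of the quadratic form $(u,v)\mapsto -u^2+A(x)uv-B(x)v^2$, whose symmetric matrix is $\left(\begin{smallmatrix}-1 & A(x)/2\\ A(x)/2 & -B(x)\end{smallmatrix}\right)$. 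Its leading principal minors are $-1$ and $B(x)-A(x)^2/4$; at $x=\alpha$ the second one is positive exactly because $A(\alpha)^2-4B(\alpha)<0$, so the form is negative definite at $\alpha$.

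Since negative definiteness is an open condition and the $(1,1)$-entry is the constant $-1$, the form stays negative definite for all $x$ in some neighborhood $U$ of $\alpha$; consequently $Q(x)\le 0$ for every $x\in U$, with equality only when $f(x)=g(x)=0$. As $h(\alpha)\neq 0$, after shrinking $U$ if necessary we may assume $h$ has constant sign on $U$. Hence $R=hQ$ has a constant (weak) sign throughout $U$: either $R\le 0$ on $U$ or $R\ge 0$ on $U$.

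With this in hand the conclusion follows from a standard Taylor argument on the parity of the order of a zero. Suppose $R^{(0)}(\alpha)=\cdots=R^{(2i)}(\alpha)=0$ but, contrary to the claim, $R^{(2i+1)}(\alpha)\neq 0$. Then Taylor's formula gives $R(x)=\frac{R^{(2i+1)}(\alpha)}{(2i+1)!}(x-\alpha)^{2i+1}+o\big((x-\alpha)^{2i+1}\big)$, whose leading term changes sign as $x$ crosses $\alpha$ because the exponent $2i+1$ is odd. This contradicts the constant sign of $R$ on $U$, so $R^{(2i+1)}(\alpha)=0$. If instead $R$ vanishes to infinite order at $\alpha$, then in particular $R^{(2i+1)}(\alpha)=0$ and there is nothing to prove.

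The step I expect to carry the real content is the sign analysis: everything hinges on translating $A(\alpha)^2-4B(\alpha)<0$ into negative definiteness of the quadratic form and then propagating this to a one-sided sign for $R$ near $\alpha$ via continuity together with $h(\alpha)\neq0$. Once the constant sign is established, the parity-of-vanishing-order conclusion is routine; the only points requiring care are shrinking the neighborhood so that both the definiteness and the sign of $h$ persist, and dealing with the flat case in which $R$ vanishes to infinite order.
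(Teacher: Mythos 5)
Your proof is correct, but it takes a genuinely different route from the paper's. The paper argues pointwise at $\alpha$ by induction: using the Leibniz rule and the negative definiteness of the form $(u,v)\mapsto -u^2+A(\alpha)uv-B(\alpha)v^2$ \emph{only at the point $\alpha$}, it shows successively that $f^{(k)}(\alpha)=g^{(k)}(\alpha)=0$ for all $k\le i$, and then a final Leibniz expansion forces $R^{(2i+1)}(\alpha)=0$ (the factor $h$ is handled at the end by writing $\overline{R}=R/h$ and applying Leibniz once more). You instead propagate the definiteness to a whole neighborhood $U$ of $\alpha$ by continuity of $B-A^2/4$, conclude that $R=hQ$ has a constant weak sign on $U$, and then invoke the standard fact that a smooth function of constant sign cannot have a zero of odd finite order, via Taylor's formula with Peano remainder. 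Your argument is shorter and more conceptual, and it cleanly isolates where the hypothesis $A(\alpha)^2-4B(\alpha)<0$ enters; the paper's computation is longer but yields the extra information that all derivatives of $f$ and $g$ up to order $i$ vanish at $\alpha$, and it never needs to leave the point $\alpha$. Both establish exactly the stated conclusion, and both suffice for the way the lemma is used later (to show that real zeros of the subresultant $R_0$ have even multiplicity). One stylistic remark: your separate treatment of the ``flat'' case is unnecessary, since the contradiction argument already covers it — if $R^{(2i+1)}(\alpha)\ne 0$ you get a sign change regardless of what happens at higher orders.
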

\begin{proof}
In order to simplify the notation, we denote $A=A(\alpha)$, $B=B(\alpha)$, $f^{(j)}=f^{(j)}(\alpha)$, $g^{(j)}=g^{(j)}(\alpha)$ and $h^{(j)}=h^{(j)}(\alpha)$.

We will first make the proof supposing $h\equiv 1$. If $R(\alpha)=0$, we have  
$$
0=-f^2+\left(Af-Bg\right)g=-\left(f^2-Agf+Bg^2\right),
$$
which only can be true if $\left(Ag\right)^2-4Bg^2\geq0$. Since by hypothesis $A^2-4B<0$, we obtain $g=0$ and thus $f=0$. 
This implies by Leibniz rule that 
$$
{R}'(\alpha)=-2ff'+\left(Af-Bg\right)'g+\left(Af-Bg\right)g'=0,
$$
showing the result for $i=0$. 
Let us make  the following induction hypothesis:
\begin{equation}\label{ddd}
\textnormal{The result is true for }i-1, \textnormal{ and for each } k\leq i-1,\ f^{(k)}=g^{(k)}=0.
\end{equation}
We will show this is true for $i$. Then the first part of the proof will be completed. Leibniz rule gives
$$
R^{(2i)}(\alpha)=-\sum_{k=0}^{2i}\binom{2i}{k}\left(f^{(k)}f^{(2i-k)}-\left(Af-Bg\right)^{(k)}g^{(2i-k)}\right).
$$
By \eqref{ddd}, if $k\leq i-1$, $f^{(k)}=g^{(k)}=0$, hence $(Af-Bg)^{(k)}=0$ and $(Af-Bg)^{(i)}=Af^{(i)}-Bg^{(i)}$ (apply Leibniz rule). Now if $k\geq i+1$, $f^{(2i-k)}=g^{(2i-k)}=0$, since  $2i-k\leq i-1$. Then last sum simplifies to 
$$
{R}^{(2i)}(\alpha)  = -\binom{2i}{i}\left((f^{(i)})^2-Ag^{(i)}f^{(i)}+B(g^{(i)})^2\right).
$$
If ${R}^{(2i)}(\alpha)=0$, we have to have $(Ag^{(i)})^2-4B(g^{(i)})^2\geq0$,
which by hypothesis gives $g^{(i)}=0$, hence $f^{(i)}=0$. 
Now it remains to prove that ${R}^{(2i+1)}(\alpha)=0$. Apply again Leibniz rule  to obtain
$$
{R}^{(2i+1)}(\alpha)=-\sum_{k=0}^{2i+1}\binom{2i+1}{k}\left(f^{(k)}f^{(2i+1-k)}-\left(Af-Bg\right)^{(k)}g^{(2i+1-k)}\right).
$$
If $k\leq i$, $f^{(k)}=0$ and $\left(Af-Bg\right)^{(k)}=0$.
If $k>i$, $2i+1-k<i+1$, and it follows that $f^{(2i+1-k)}=g^{(2i+1-k)}=0$. This gives us ${R}^{(2i+1)}(\alpha)=0$.

In the general case,  we define $\overline{R}(x)=R(x)/h(x)$. Leibniz rule gives for $l=0,1,2\ldots$ 
$$
\overline{R}^{(l)}=\sum_{k=0}^{l}\binom{l}{k}R^{(k)}\left(1/h\right)^{(l-k)}.
$$
Then the hypothesis $R^{(0)}(\alpha)=\cdots=R^{(2i)}(\alpha)=0$ shows $\overline{R}^{(0)}(\alpha)=\cdots=\overline{R}^{(2i)}(\alpha)=0$. Thus by the first part of the proof we obtain $\overline{R}^{(2i+1)}(\alpha)=0$. Hence, putting $l=2i+1$ above, we conclude $R^{(2i+1)}(\alpha)=0$.
\end{proof}

Now we apply the preceding results to produce criteria to decide when two special polynomials have common zeros.  

\begin{theorem}\label{raiz_23}
Let $p(x,y)$ and $q(x,y)$ in $\R[x,y]$ as follows 
$$
p(x,y)=M(x)y^2+a(x)y+b(x),\ \ \ \ \ q(x,y)=N(x)y^3+c(x)y^2+d(x)y+e(x).
$$
Write $R_0(x)=R_0(p,q,y)=b_0+b_1x+\cdots+b_lx^l$. If
\begin{enumerate}
\item $R_0(x)$ has no common zeros with $N(x)$,
\item\label{raiz_23_2} there exists $z\in\R$ such that $b_lR_0(z)<0$,
\end{enumerate}
then there exists $(\alpha,\beta)\in\R^2$ such that $p(\alpha,\beta)=q(\alpha,\beta)=0$.
\end{theorem}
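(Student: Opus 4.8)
The plan is to identify $R_0(x)$ with the resultant $\mathrm{Res}_y\bigl(p(x,\cdot),q(x,\cdot)\bigr)$, i.e.\ the determinant of the full Sylvester matrix $Syl(p,q,y)$ formed with the formal $y$-degrees $2$ and $3$, so that for each fixed $x$ the equation $R_0(x)=0$ detects a common root in $\C$ of the one-variable polynomials $p(x,\cdot)$ and $q(x,\cdot)$. First I would produce a real zero of $R_0$ by a sign-change argument: writing $R_0(x)=b_0+\cdots+b_lx^l$, hypothesis (2) forces $b_l\neq0$ and yields $z$ with $b_lR_0(z)<0$, whereas $b_lR_0(x)\sim b_l^2x^l\to+\infty$ as $x\to+\infty$. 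Hence $b_lR_0$ is negative at $z$ and positive for large $x$, so by the intermediate value theorem there is a point $\alpha>z$ at which $R_0$ vanishes with \emph{odd} multiplicity (a genuine sign change). Hypothesis (1) guarantees $N(\alpha)\neq0$, so $q(\alpha,\cdot)$ is a true cubic.

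Next I would use the expansion of the resultant as a product over the roots of the cubic. Both $R_0(x)$ and $N(x)^2\prod_{j=1}^3 p(x,\sigma_j)$, where $\sigma_1,\sigma_2,\sigma_3$ denote the roots of $q(x,\cdot)$, are polynomials in the coefficients of $p$ and $q$ and agree on the set where the leading coefficient $M(x)$ is nonzero; being polynomials, they agree identically. Evaluating at $x=\alpha$, where $N(\alpha)\neq0$ ensures three well-defined roots $\sigma_j$, gives $R_0(\alpha)=N(\alpha)^2\,p(\alpha,\sigma_1)p(\alpha,\sigma_2)p(\alpha,\sigma_3)=0$. Since $q(\alpha,\cdot)$ has real coefficients, either all three $\sigma_j$ are real, or exactly one is real and the remaining two form a conjugate pair. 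In the first case one of the factors $p(\alpha,\sigma_j)$ vanishes at a real $\sigma_j$, so $(\alpha,\sigma_j)$ is already the desired real common zero of $p$ and $q$.

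The main obstacle is the second case, where the vanishing of $R_0(\alpha)$ could be caused solely by a \emph{complex} common root. Here I would argue that this cannot happen at our chosen $\alpha$ because of its odd multiplicity. Suppose $q(\alpha,\cdot)$ has a real root $\sigma_3$ with $p(\alpha,\sigma_3)\neq0$ and a conjugate pair $\sigma_1=\overline{\sigma_2}$ with $p(\alpha,\sigma_1)=0$. For real $x$ close to $\alpha$ the three roots stay distinct and depend analytically on $x$, with $\sigma_2(x)=\overline{\sigma_1(x)}$; consequently $p(x,\sigma_1(x))p(x,\sigma_2(x))=|p(x,\sigma_1(x))|^2$ vanishes at $\alpha$ to even order, while $N(x)^2p(x,\sigma_3(x))$ is continuous and nonzero near $\alpha$. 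Therefore $R_0$ would vanish to even order at $\alpha$ and not change sign, contradicting the odd multiplicity established above. Hence the complex-only case is excluded and a real common zero exists. I expect this upgrade from a common complex root to a common real root to be the crux; the degenerate subcases---a repeated root of $q(\alpha,\cdot)$, or a collision of $\sigma_3$ with the conjugate pair---would be handled by the analyticity of $R_0$ together with, if necessary, replacing $\alpha$ by another sign-change point inside $(z,+\infty)$.
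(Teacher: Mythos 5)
Your argument is correct, but it takes a genuinely different route from the paper's. The paper enumerates \emph{every} real zero $\alpha_i$ of $R_0$, splits into cases according to whether $M(\alpha_i)=0$, uses the subresultant criterion (Lemma \ref{resultant}) to count common roots, and, in the hard case where the quadratic $p(\alpha_i,\cdot)$ has negative discriminant, invokes the identity $R_0=M^3N^2\bigl(-f^2+(Af-Bg)g\bigr)$ together with the Leibniz-rule induction of Lemma \ref{DerivadaPar} to show that every such $\alpha_i$ is a zero of even multiplicity, whence $b_lR_0\geq 0$ globally, contradicting hypothesis (2). You instead select a \emph{single} sign-change zero $\alpha$ of $R_0$ in $(z,\infty)$, apply Poisson's product formula $R_0(\alpha)=N(\alpha)^2\prod_j p(\alpha,\sigma_j)$ over the roots of the cubic, and observe that a purely non-real common root forces $R_0$ to factor locally as a nonvanishing continuous function times $|p(x,\sigma_1(x))|^2$, hence to vanish without changing sign; this reaches the same key dichotomy (non-real common roots imply even-order vanishing) by complex conjugation rather than by subresultants. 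Your version is more self-contained and handles $M(\alpha)=0$ uniformly, where the paper needs a separate sub-argument; what the paper's heavier machinery buys is reusability, since Lemma \ref{DerivadaPar} and the $-f^2+(Af-Bg)g$ normal form are applied again verbatim in Theorem \ref{raiz_22} and in Corollaries \ref{raiz_23_Cor} and \ref{raiz_22_Cor}, where the relevant coefficient functions need not be polynomial. Two small points to tighten: the Poisson identity should be justified on the set where $N(x)\neq 0$ rather than where $M(x)\neq 0$, since nonvanishing of $N$ is what makes the three roots $\sigma_j$ well defined (and hypothesis (1) rules out $N\equiv 0$); and your ``degenerate subcases'' are in fact vacuous, because a real cubic with a repeated root has all of its roots real, so in the conjugate-pair case the three roots are automatically distinct and the analytic continuation of the roots needs no repair.
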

\begin{proof}
Let $\alpha_1,\ldots,\alpha_k$ be the distinct real zeros of $R_0(x)$ (by  \ref{raiz_23_2} we have at least one). 

If $M(\alpha_i)=0$, for some $i\in\{1,\ldots,k\}$, we have two possibilities: $$
a(\alpha_i)\neq 0\ \ \ \ \ \textnormal{or}\ \ \ \ \ a(\alpha_i)=0.
$$
In the first one, define $\overline{p}(y)=p(\alpha_i,y)$ and $\overline{q}(y)=q(\alpha_i,y)$, and observe that $R_0(\overline{p},\overline{q},y)=R_0(\alpha_i)/N(\alpha_i)=0$, and $R_1(\overline{p},\overline{q},y)=a(\alpha_i)^2$.
Thus by Lemma \ref{resultant} there is exactly one $\beta\in\R$ such that $p(\alpha_i,\beta)=q(\alpha_i,\beta)=0$ (since the coefficients of $\overline{p}$ and $\overline{q}$ are real).
On the other hand, if $a(\alpha_i)=0$, it is simple to see that $R_0(\alpha_i)=N(\alpha_i)^2b(\alpha_i)^3$. 
Thus $b(\alpha_i)=0$, hence $p(\alpha_i,y)\equiv 0$. Moreover, there exists $\beta\in\R$ such that $q(\alpha_i,\beta)=0$, since $q(\alpha_i,y)$ is a polynomial of degree $3$ in $y$.

From now on, we will suppose $M(\alpha_i)\neq 0$, for all $i\in\{1,\ldots,k\}$. 
Denoting $R_1(x)=R_1(p,q,y)$, if $R_1(\alpha_i)\neq0$ for some $i\in\{1,\ldots,k\}$, Lemma \ref{resultant} shows the existence of $\beta\in\R$ such that $p(\alpha_i,\beta)=q(\alpha_i,\beta)=0$ as we wanted. 

Thus we suppose $R_1(\alpha_i)=0$ for each $i\in\{1,\ldots,k\}$.
\emph{We assert there is $i\in\{1,\ldots,k\}$ such that $a(\alpha_i)^2-4M(\alpha_i)b(\alpha_i)\geq0$}. 
In this case, since by Lemma \ref{resultant} $p(\alpha_i,y)$ and $q(\alpha_i,y)$ have exactly two zeros in common (observe  $R_2(p,q,y)(\alpha_i)=M(\alpha_i)\neq 0$), they must be real zeros and we are done.

Let us then prove the assertion. Suppose by contradiction that for all $i\in\{1,\ldots,k\}$, $a(\alpha_i)^2-4M(\alpha_i)b(\alpha_i)<0$. 
By definition of subresultant and by Lemma \ref{Calculo_res}, we have in a neighborhood of each $\alpha_i$ where $M(x)N(x)\neq 0$, 
\begin{equation}\label{raiz_23_equa}
R_0(p,q,y)=M^3N^2R_0\left(p/M,q/N,y\right)=M^3N^2\left(-f^2+(Af-Bg)g\right),
\end{equation}
where we have written $A=a/M$, $B=b/M$, $f=ab/M^2-bc/(MN)+e/N$ and $g=R_1(p/M,q/N,y)$.  Then by Lemma \ref{DerivadaPar}, since $a^2-4Mb<0$ if and only if $A^2-4B<0$, we obtain that each $\alpha_i$ is a zero of multiplicity even of $R_0(x)$, say $m_i$. Then
$$
R_0(x)=b_l(x-\alpha_1)^{m_1}\cdots(x-\alpha_k)^{m_k}r(x),
$$
where $r(x)$ is a monic polynomial without real zeros. 
Thus $b_lR_0(x)\geq 0$, for all $x\in\R$, a contradiction with assumption  \ref{raiz_23_2}.
\end{proof}

\begin{theorem}\label{raiz_22}
Let $p(x,y)$ and $q(x,y)$ in $\R[x,y]$ be defined by 
$$
p(x,y)=M(x)y^2+a(x)y+b(x),\ \ \ \ \ q(x,y)=N(x)y^2+c(x)y+d(x).
$$
Write $R_0(x)=R_0(p,q,y)=b_0+b_1x+\cdots+b_lx^l$.
If 
\begin{enumerate}
\item\label{raiz_22_1} $R_0(x)$ has no common zeros with $M(x)$ and $N(x)$,
\item\label{raiz_22_2} there exists $z\in\R$ such that $b_lR_0(z)<0$,
\end{enumerate}
then there exists $(\alpha,\beta)\in\R^2$ such that $p(\alpha,\beta)=q(\alpha,\beta)=0$.

Moreover, if we keep other hypotheses and change \ref{raiz_22_1} by
\begin{itemize}
\item[(i$'$)]\label{78} $R_0(x)$ has no common zeros with $N(x)$ and there is $\alpha\in\R$ with $R_0(\alpha)=M(\alpha)=0$,
\end{itemize}
then there exists $\beta\in\R$ such that $p(\alpha,\beta)=q(\alpha,\beta)=0$ if and only if $a(\alpha)\neq 0$ or $p(\alpha,y)\equiv 0$ and $c(\alpha)^2-4N(\alpha)d(\alpha)\geq 0$.
\end{theorem}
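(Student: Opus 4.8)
The plan is to mimic the proof of Theorem \ref{raiz_23}, exploiting the fact that here both $p$ and $q$ have degree $2$ in $y$, which makes the subresultant bookkeeping lighter and, thanks to hypothesis \ref{raiz_22_1}, removes the degenerate casework that appears in Theorem \ref{raiz_23}.

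For the first statement I would begin by letting $\alpha_1,\dots,\alpha_k$ denote the distinct real zeros of $R_0(x)$; hypothesis \ref{raiz_22_2} guarantees at least one, since $b_lR_0(z)<0$ forces $R_0(z)$ to have the opposite sign to its leading coefficient $b_l$, whence a real zero exists by the intermediate value theorem. By hypothesis \ref{raiz_22_1} we have $M(\alpha_i)\neq 0$ and $N(\alpha_i)\neq 0$ for every $i$, so both $p(\alpha_i,y)$ and $q(\alpha_i,y)$ genuinely have degree $2$ in $y$ and Lemma \ref{resultant} applies verbatim. Writing $R_1(x)=R_1(p,q,y)$, if $R_1(\alpha_i)\neq 0$ for some $i$ then Lemma \ref{resultant} gives exactly one common root of $p(\alpha_i,\cdot)$ and $q(\alpha_i,\cdot)$, which must be real as non-real roots occur in conjugate pairs, and we are done. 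Hence I may assume $R_1(\alpha_i)=0$ for all $i$; since the top subresultant of two quadratics is the $0\times 0$ determinant $1\neq 0$, Lemma \ref{resultant} then yields exactly two common roots at each $\alpha_i$, so $q(\alpha_i,\cdot)$ is proportional to $p(\alpha_i,\cdot)$ and the common roots are precisely the roots of $p(\alpha_i,\cdot)$.

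The crux is then the assertion that $a(\alpha_i)^2-4M(\alpha_i)b(\alpha_i)\geq 0$ for at least one $i$, for then $p(\alpha_i,\cdot)$ has a real root $\beta$ and $(\alpha_i,\beta)$ is the desired common zero. I would prove this by contradiction exactly as in Theorem \ref{raiz_23}: assuming $a(\alpha_i)^2-4M(\alpha_i)b(\alpha_i)<0$ for all $i$, I rewrite $R_0$ near each $\alpha_i$, using Lemma \ref{Calculo_res} together with the scaling $R_0(p,q,y)=M^2N^2R_0(p/M,q/N,y)$, in the form $R(x)=h\big(-f^2+(Af-Bg)g\big)$ of Lemma \ref{DerivadaPar}, with $h=M^2N^2$, $A=a/M$, $B=b/M$, $f=d/N-b/M$ and $g=R_1(p/M,q/N,y)$. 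The condition $A(\alpha_i)^2-4B(\alpha_i)<0$ is equivalent to $a(\alpha_i)^2-4M(\alpha_i)b(\alpha_i)<0$, and $h(\alpha_i)\neq 0$, so Lemma \ref{DerivadaPar} applies and forces each $\alpha_i$ to be a zero of $R_0$ of even multiplicity. Factoring $R_0(x)=b_l\prod_i(x-\alpha_i)^{m_i}r(x)$ with every $m_i$ even and $r$ monic without real zeros (hence $r>0$), I obtain $b_lR_0(x)=b_l^2\prod_i(x-\alpha_i)^{m_i}r(x)\geq 0$ for all $x$, contradicting \ref{raiz_22_2}. This even-multiplicity step, resting on Lemma \ref{DerivadaPar}, is the main obstacle; everything else is formal.

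For the ``Moreover'' part I expect a short direct computation to suffice, since condition \ref{raiz_22_2} is no longer needed and the specific $\alpha$ is handed to us. Because $M(\alpha)=0$, the Sylvester determinant collapses and I would compute $R_0(\alpha)=N(\alpha)\big(a(\alpha)^2d(\alpha)-a(\alpha)b(\alpha)c(\alpha)+b(\alpha)^2N(\alpha)\big)$; as $R_0(\alpha)=0$ and $N(\alpha)\neq 0$ by (i$'$), the second factor vanishes. If $a(\alpha)\neq 0$, then $p(\alpha,y)=a(\alpha)y+b(\alpha)$ has the single root $-b(\alpha)/a(\alpha)$, and the vanishing of that factor says exactly that this root also annihilates $q(\alpha,\cdot)$, giving a real common zero. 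If instead $a(\alpha)=0$, the vanishing factor reduces to $b(\alpha)^2N(\alpha)=0$, forcing $b(\alpha)=0$ and hence $p(\alpha,y)\equiv 0$; then a common zero exists precisely when $q(\alpha,y)=N(\alpha)y^2+c(\alpha)y+d(\alpha)$ has a real root, that is, when $c(\alpha)^2-4N(\alpha)d(\alpha)\geq 0$. Assembling these two cases yields the claimed equivalence.
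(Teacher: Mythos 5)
Your proposal is correct and follows essentially the same route as the paper: the same reduction to the real zeros of $R_0$, the same use of Lemma \ref{resultant} at points where $R_1(\alpha_i)\neq 0$, and the same key contradiction argument via Lemma \ref{DerivadaPar} (with the identical choice of $f$, $g$, $A$, $B$ from \eqref{raiz_22_equa}) to force even multiplicities and contradict hypothesis \ref{raiz_22_2}. The ``Moreover'' part is likewise handled as in the paper, up to an immaterial sign in the collapsed Sylvester determinant.
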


\begin{proof}
As in the proof of Theorem \ref{raiz_23}, let $\alpha_1,\ldots,\alpha_k$ be the distinct real zeros of $R_0(x)$. 

If $M(\alpha_i)=0$ for some $i$, and $a(\alpha_i)\neq 0$, take $\overline{p}(y)=p(\alpha_i,y)$ and $\overline{q}(y)=q(\alpha_i,y)$, and observe that  $R_0(\overline{p},\overline{q},y)=R_0(\alpha_i)/N(\alpha_i)=0$. 
Since $R_1(\overline{p},\overline{q},y)=a(\alpha_i)$, we have by Lemma \ref{resultant} that there is $\beta\in\R$ such that $p(\alpha_i,\beta)=q(\alpha_i,\beta)=0$. 
If now $a(\alpha_i)=0$, $R_0(\alpha_i)=N(\alpha_i)^2b(\alpha_i)^2$, hence   $b(\alpha_i)=0$. Thus $p(\alpha_i,y)\equiv0$, and  $q(\alpha_i,y)=0$ for some $y\in\R$ if and only if $c(\alpha_i)^2-4N(\alpha_i)d(\alpha_i)\geq 0$.
This proves the second part of the theorem.

We suppose now that $M(\alpha_i)\neq 0$ for each $i\in\{1,\ldots,k\}$. The proof from now on is similar to the proof of Theorem \ref{raiz_23}. 
If $R_1(\alpha_i)\neq0$ for some $i$, we are done. If $R_1(\alpha_i)=0$ for each $i=1,\ldots,k$, \emph{we assert there is $i$ such that $a(\alpha_i)^2-4M(\alpha_i)b(\alpha_i)\geq 0$}. 
If this is in force, since $p(\alpha_i,y)$ and $q(\alpha_i,y)$ have two zeros in common, they must be real. 

Thus let us prove the assertion. Suppose $a(\alpha_i)^2-4M(\alpha_i)b(\alpha_i)< 0$, for each $i\in\{1,\ldots,k\}$. 
By Lemma \ref{Calculo_res} and definition of resultant, we get in a small neighbourhood of each $\alpha_i$
\begin{equation}\label{raiz_22_equa}
R_0(p,q,y)=M^2N^2\left(-f^2+(Af-Bg)g\right),
\end{equation}
where $A=a/M$, $B=b/M$, $f=d/N-b/M$ and $g=R_1(p/M,q/N,y)$. Then by Lemma \ref{DerivadaPar}, we have that all the zeros of $R_0(x)$ have multiplicity even $m_i$. Thus 
$$
R_0(x)=b_l(x-\alpha_1)^{m_1}\cdots (x-\alpha_k)^{m_k}r(x),
$$
where $r(x)$ is a monic polynomial without real zeros. This gives $b_lR_0(x)\geq0$ for all $x\in\R$, a contradiction with hypothesis \ref{raiz_22_2}. 
\end{proof}

The following two corollaries use the $1$-subresultant to analyse common zeros of polynomials when hypotheses \ref{raiz_23_2} of the preceding  theorems are difficult to be verified.

\begin{corollary}\label{raiz_23_Cor}
Let $p(x,y)$ and $q(x,y)$ as in Theorem \ref{raiz_23}. If
\begin{enumerate}
\item $R_0(x)$ has no common zeros with $N(x)$,
\item\label{iiiii} there exists $z\in\R$ such that $R_1(z)=0$, $b_lM(z)>0$ and $N(z)\neq 0$,
\end{enumerate}
then there exists $(\alpha,\beta)\in\R^2$ such that $p(\alpha,\beta)=q(\alpha,\beta)=0$.
\end{corollary}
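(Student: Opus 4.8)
The plan is to reduce the statement to Theorem \ref{raiz_23}: its first hypothesis coincides with hypothesis (1) here, so it suffices either to exhibit a point $w\in\R$ with $b_lR_0(w)<0$ (thereby verifying \ref{raiz_23_2}) or to produce a real common zero by hand.

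First I would extract information from the point $z$ furnished by \ref{iiiii}. Since $b_lM(z)>0$ forces $b_l\neq 0$ and $M(z)\neq 0$, and since $N(z)\neq 0$, I can use the scaling behaviour of subresultants (dividing the three rows coming from $p$ by $M$ and the two rows coming from $q$ by $N$ in the Sylvester matrix leaves, for the $1$-subresultant, two $p$-rows and one $q$-row) to write $R_1(p,q,y)=M^2N\,g$ with $g=R_1(p/M,q/N,y)$. Hence $R_1(z)=0$ gives $g(z)=0$. Plugging $g(z)=0$ into the identity \eqref{raiz_23_equa}, which holds wherever $MN\neq 0$, yields $R_0(z)=-M(z)^3N(z)^2f(z)^2$, so that
$$
b_lR_0(z)=-\big(b_lM(z)\big)\big(M(z)N(z)f(z)\big)^2\le 0.
$$
If $f(z)\neq 0$ this inequality is strict, hypothesis \ref{raiz_23_2} holds with $w=z$, and Theorem \ref{raiz_23} finishes the proof.

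The delicate case is $f(z)=0$, where $R_0(z)=0$ and the reduction to \ref{raiz_23_2} is not immediate; here I would split according to the discriminant $\delta=a(z)^2-4M(z)b(z)$ of $p(z,\cdot)$. If $\delta\ge 0$, I argue directly: from $R_0(z)=R_1(z)=0$, $M(z),N(z)\neq 0$, and $R_2(p,q,y)(z)=M(z)\neq 0$, Lemma \ref{resultant} shows that $p(z,\cdot)$ and $q(z,\cdot)$ have exactly two common roots; these are necessarily the two roots of the degree-two polynomial $p(z,\cdot)$, which are real because $\delta\ge 0$. This produces $\beta\in\R$ with $p(z,\beta)=q(z,\beta)=0$, and we are done with $(\alpha,\beta)=(z,\beta)$.

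It remains to treat $f(z)=0$ with $\delta<0$, which I expect to be the main obstacle. On a small neighbourhood of $z$ we still have $MN\neq 0$, $b_lM>0$, and $a^2-4Mb<0$ (equivalently $A^2-4B<0$, with $A=a/M$, $B=b/M$); on this neighbourhood the quadratic form $(u,v)\mapsto -u^2+Auv-Bv^2$ is negative semidefinite, whence the bracket $-f^2+(Af-Bg)g$ in \eqref{raiz_23_equa} is $\le 0$ and therefore
$$
b_lR_0(x)=\big(b_lM(x)\big)M(x)^2N(x)^2\big(-f^2+(Af-Bg)g\big)\le 0
$$
throughout the neighbourhood. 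Because $b_l\neq 0$, the polynomial $R_0$ is not identically zero and so has only finitely many real roots; choosing $w$ near $z$ with $R_0(w)\neq 0$ gives $b_lR_0(w)<0$, and Theorem \ref{raiz_23} again applies. (Alternatively, Lemma \ref{DerivadaPar} shows that $z$ is a zero of even order of $R_0$, from which the same strict sign just off $z$ follows.) The crux throughout is that $g(z)=0$ collapses $R_0(z)$ to $-M^3N^2f^2$, reducing everything to controlling the single sign $b_lM(z)$ together with the discriminant of $p(z,\cdot)$.
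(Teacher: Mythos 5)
Your proposal is correct and follows essentially the same route as the paper: the same identity $R_0=M^3N^2\bigl(-f^2+(Af-Bg)g\bigr)$ near $z$, the same three cases ($f(z)\neq 0$; $f(z)=0$ with $a(z)^2-4M(z)b(z)\geq 0$ handled via Lemma \ref{resultant}; the remaining discriminant-negative case), and the same reduction to Theorem \ref{raiz_23}. Your handling of the last case is a slight streamlining of the paper's (which splits into $g\equiv 0$ and $g\not\equiv 0$): you observe $b_lR_0\leq 0$ throughout a neighbourhood and pick a nearby non-root of the nonzero polynomial $R_0$, which is equally valid.
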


\begin{proof}
Since $M(z)N(z)\neq 0$, in a neighborhood of $z$ we have, as in  \eqref{raiz_23_equa},
\begin{equation}\label{789}
R_0(x)=R_0(p,q,y)=N^2M^3\left(-f^2+(Af-Bg)g\right),
\end{equation}
If $f(z)\neq 0$, then $b_lR_0(z)=-N(z)^2M(z)^2f(z)^2b_lM(z)<0$ (recall that $g=R_1(p/M,q/N,y)$), and thus the result follows from Theorem \ref{raiz_23}.

If, on the other hand, $f(z)=0$, we have $R_0(z)=R_1(z)=0$, which guarantees two common zeros of $p(z,y)$ and $q(z,y)$. 
If these zeros are real we are done. 

Thus let us suppose $a(z)^2-4M(z)b(z)<0$. If $g\equiv 0$, since the zeros of $f$ are isolated (it is a rational function not identically zero, since if $f\equiv 0$, $R_0\equiv 0$ by \eqref{789}, hence $b_l=0$, a contradiction with \ref{iiiii}), we have by \eqref{789} for $x\neq z$ near $z$ 
$$
b_lR_0(x)=-N(x)^2M(x)^2f(x)^2b_lM(x)<0,
$$
and we are under the hypotheses of Theorem \ref{raiz_23}. Now if $g\not\equiv 0$, we have $g(x)\neq 0$ and $a(x)^2-4M(x)b(x)<0$ for $x\neq z$ near $z$. 
This gives $\left(A(x)g(x)\right)^2-4B(x)g(x)^2<0$ which guarantees  $-f(x)^2+A(x)g(x)f(x)-B(x)g(x)^2<0$ for $x\neq z$ near $z$. 
This together with  \eqref{789} and assumption \eqref{iiiii} gives 
$$
b_lR_0(x)=N(x)^2M(x)^2b_lM(x)\left(-f(x)^2+A(x)g(x)f(x)-B(x)g(x)^2\right)<0,
$$
for $x\neq z$ near $z$, and we are again  under the hypotheses of Theorem \ref{raiz_23}.
\end{proof}

\begin{corollary}\label{raiz_22_Cor}
Let $p(x,y)$ and $q(x,y)$ as in Theorem \ref{raiz_22}. If 
\begin{enumerate}
\item\label{7897} $R_0(x)$ has no common zeros with $M(x)$ and $N(x)$,
\item\label{7898} $b_l>0$ and there exists $z\in\R$ such that $R_1(z)=0$ and $M(z)N(z)\neq 0$,
\end{enumerate}
then there exists $(\alpha,\beta)\in\R^2$ such that $p(\alpha,\beta)=q(\alpha,\beta)=0$.

Moreover, if we keep  other hypotheses and change \ref{raiz_22_1} by
\begin{itemize}
\item[(i$'$)] $R_0(x)$ has no common zeros with $N(x)$ and there is $\alpha\in\R$ with $R_0(\alpha)=M(\alpha)=0$,
\end{itemize}
then there exists $\beta\in\R$ such that $p(\alpha,\beta)=q(\alpha,\beta)=0$ if and only if $a(\alpha)\neq 0$ or $p(\alpha,y)\equiv 0$ and $c(\alpha)^2-4N(\alpha)d(\alpha)\geq 0$.
\end{corollary}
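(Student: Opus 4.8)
The plan is to run the argument of Corollary~\ref{raiz_23_Cor} almost verbatim, replacing Theorem~\ref{raiz_23} by Theorem~\ref{raiz_22} and the identity \eqref{raiz_23_equa} by \eqref{raiz_22_equa}. The decisive structural difference is that in \eqref{raiz_22_equa} the leading coefficient $M$ occurs to the \emph{even} power $M^2$, so the prefactor $M^2N^2$ is automatically nonnegative. This is precisely what lets us relax the sign condition from $b_lM(z)>0$, which was needed in Corollary~\ref{raiz_23_Cor} because of the odd power $M^3$ appearing there, to the bare requirement $b_l>0$ in \ref{7898}.

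First I would note that on the set where $MN\neq 0$ the hypothesis $R_1(z)=0$ is equivalent to $g(z)=0$, since $R_1(p,q,y)=MN\,R_1(p/M,q/N,y)=MNg$ by comparison with the monic formula of Lemma~\ref{Calculo_res}. Evaluating \eqref{raiz_22_equa} at such a $z$ then gives $R_0(z)=M(z)^2N(z)^2\bigl(-f(z)^2\bigr)$, so that $b_lR_0(z)=-b_lM(z)^2N(z)^2f(z)^2\le 0$. If $f(z)\neq 0$ this inequality is strict, hypothesis \ref{raiz_22_2} of Theorem~\ref{raiz_22} holds at $z$, hypothesis \ref{raiz_22_1} is exactly our \ref{7897}, and the common zero is produced by Theorem~\ref{raiz_22} itself.

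The only delicate case is $f(z)=0$. Then $R_0(z)=R_1(z)=0$, and since $M(z)N(z)\neq 0$ Lemma~\ref{resultant} forces $p(z,\cdot)$ and $q(z,\cdot)$ to share both of their roots; if these are real we are done. Otherwise $a(z)^2-4M(z)b(z)<0$, and I would pass to points $x\neq z$ near $z$, exactly as in Corollary~\ref{raiz_23_Cor}. When $g\equiv 0$ one has $f\not\equiv 0$ (else \eqref{raiz_22_equa} forces $R_0\equiv 0$, hence $b_l=0$, contradicting \ref{7898}), so $f(x)\neq 0$ for punctured $x$ near $z$ and $b_lR_0(x)=-b_lM(x)^2N(x)^2f(x)^2<0$. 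When $g\not\equiv 0$ one has $g(x)\neq 0$ and $a(x)^2-4M(x)b(x)<0$, equivalently $A(x)^2-4B(x)<0$, for $x\neq z$ near $z$; examining the discriminant in $f$ gives $-f^2+(Af-Bg)g<0$, whence again $b_lR_0(x)<0$. In both subcases we are back under the hypotheses of Theorem~\ref{raiz_22} at the point $x$, which yields the desired $(\alpha,\beta)$.

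For the \emph{moreover} assertion nothing new is required: the modified hypothesis (i$'$) is identical to the corresponding hypothesis of Theorem~\ref{raiz_22}, and the stated equivalence at the point $\alpha$ with $R_0(\alpha)=M(\alpha)=0$ is precisely the second conclusion of that theorem, whose proof there does not use \ref{7898}. I expect the main obstacle to be purely organizational, namely the careful bookkeeping of the degenerate case $f(z)=0$ with complex common roots; the even power of $M$ removes the single sign subtlety that otherwise separates this proof from that of Corollary~\ref{raiz_23_Cor}.
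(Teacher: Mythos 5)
Your proposal is correct and follows essentially the same route as the paper's proof: reduce to Theorem \ref{raiz_22} via the identity \eqref{raiz_22_equa}, split on $f(z)\neq 0$ versus $f(z)=0$, and in the degenerate case treat $g\equiv 0$ and $g\not\equiv 0$ separately to get $b_lR_0(x)<0$ for $x\neq z$ near $z$; the ``moreover'' part is likewise dispatched by the second conclusion of Theorem \ref{raiz_22}, exactly as in the paper. Your observation that the even power $M^2$ in \eqref{raiz_22_equa} is what permits weakening $b_lM(z)>0$ to $b_l>0$ is accurate and is implicitly the reason for the difference between the two corollaries' hypotheses.
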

\begin{proof}
The second part of the corollary is clearly similar to the second part of Theorem \ref{raiz_22}. Thus suppose we are under hypothesis \ref{7897} and \ref{7898}.

Since $M(z)N(z)\neq 0$, in a neighborhood of $z$ we have, as in equation \eqref{raiz_22_equa},
\begin{equation}\label{799}
R_0(p,q,y)=N^2M^2\left(-f^2+(Af-Bg)g\right).
\end{equation}
If $f(z)\neq 0$, we have $b_lR_0(z)=-N(z)^2M(z)^2f(z)^2b_l<0$, hence we are under the hypotheses of Theorem \ref{raiz_22}.

If, on the other hand, $f(z)=0$, we have $R_0(z)=R_1(z)=0$, which guarantees two common zeros of $p(z,y)$ and $q(z,y)$. If these zeros are real we are done.
Thus let us suppose $a(z)^2-4M(z)b(z)<0$. If $g\equiv 0$, since the zeros of $f$ are isolated (as in the proof of Corollary \ref{raiz_23_Cor}), we  have $b_lR_0(x)=-N(x)^2M(x)^2f(x)^2b_l<0$ for $x\neq z$ near $z$, and we are again  under the hypotheses of Theorem \ref{raiz_22}. 
Now if $g\not\equiv 0$, we have $g(x)\neq 0$ and $a(x)^2-4M(x)b(x)<0$ for $x\neq z$ near $z$. 
This gives $\left(A(x)g(x)\right)^2-4B(x)g(x)^2<0$ and thus $-f(x)^2+A(x)g(x)f(x)-B(x)g(x)^2<0$ for $x\neq z$ near $z$. 
This together with  \eqref{799} and assumption \ref{7898} gives 
$$
b_lR_0(x)=N(x)^2M(x)^2b_l\left(-f(x)^2+A(x)g(x)f(x)-B(x)g(x)^2\right)<0,
$$
for $x\ne z$ near $z$, and we are also under hypotheses of Theorem \ref{raiz_22}.
\end{proof}

\section{Level sets}\label{section_level_sets}
 
\begin{lemma}\label{unbounded}
Let $M\subset \R^2$ be an open set. If $p:M\to\R$ is a smooth submersion then the connected components of the level sets of $p$ induce a smooth foliation of dimension $1$ of $M$. 
\end{lemma}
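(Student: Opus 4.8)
The plan is to produce, around each point of $M$, a local chart in which $p$ becomes a coordinate projection, so that the components of the level sets become horizontal lines; these charts will then constitute a foliation atlas of dimension $1$, and the leaves will be exactly the connected components of the level sets of $p$.

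First I would fix $a\in M$. Since $p$ is a submersion, $\left(\frac{\partial p}{\partial x}(a),\frac{\partial p}{\partial y}(a)\right)\neq(0,0)$. Suppose, say, $\frac{\partial p}{\partial y}(a)\neq 0$ (the case $\frac{\partial p}{\partial x}(a)\neq 0$ is entirely analogous, exchanging the roles of the two coordinates). Define $\varphi_a(x,y)=(x,p(x,y))$ on a neighbourhood of $a$. Its Jacobian determinant equals $\frac{\partial p}{\partial y}$, which is nonzero at $a$, so by the inverse function theorem $\varphi_a$ restricts to a diffeomorphism from some open neighbourhood $U_a$ of $a$ onto an open set $V_a\subset\R^2$. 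Writing $\varphi_a=(u,v)$, we have $v=p$, so for each $c$ the set $\{p=c\}\cap U_a$ corresponds under $\varphi_a$ to the horizontal slice $\{v=c\}\cap V_a$, whose connected components are the plaques of the chart.

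Next I would check that $\{(U_a,\varphi_a)\}_{a\in M}$ is a foliation atlas. On an overlap $U_a\cap U_b$, writing $\varphi_a=(u_a,v_a)$ and $\varphi_b=(u_b,v_b)$, both second coordinates equal $p$; hence $v_b=v_a$ on the overlap and the transition map $\varphi_b\circ\varphi_a^{-1}$ has the form $(u_a,v_a)\mapsto\big(u_b(u_a,v_a),v_a\big)$, which preserves the horizontal foliation as required. The decisive point here is that the transverse coordinate is globally the single function $p$, so the plaques in different charts automatically agree on where they sit, and this is what makes the local pictures glue into a genuine foliation.

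Finally I would identify the leaves. A leaf is a maximal connected union of plaques; since each plaque lies in a single level set $p^{-1}(c)$ and leaves are connected, every leaf is contained in one connected component of some $p^{-1}(c)$. Conversely, each connected component of $p^{-1}(c)$ is a connected $1$-dimensional submanifold covered by plaques, hence is a single leaf. Therefore the leaves of the foliation are exactly the connected components of the level sets of $p$. I do not expect a serious obstacle: the only real content is the local normal form coming from the inverse function theorem, and the one point to handle with a little care is the bookkeeping of the two cases $\frac{\partial p}{\partial y}\neq 0$ and $\frac{\partial p}{\partial x}\neq 0$ together with the verification that the plaques patch into global leaves coinciding with the level-set components.
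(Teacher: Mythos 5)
Your argument is correct and is precisely the standard proof of the local normal form for submersions together with the observation that $p$ itself serves as the transverse coordinate in every chart; the paper does not prove this lemma itself but simply cites Camacho--Lins Neto, where this same argument appears. No issues.
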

\begin{proof}
See, for example, \cite{Cam}.
\end{proof}
In particular, when $M=\R^2$, any connected component of a level set of $f$ is an unbounded curve in both directions.

\begin{lemma}\label{zero_interval}
Let $p(x,y)=a_n(y)x^n+\cdots+a_0(y)$ be a smooth function. Then for each  interval $[c,d]\subset\{y\in\R\ |\ a_n(y)\neq 0\}$, there exists an  interval $[a,b]$ such that 
$$\{(x,y)\in\R^2\ |\ y\in [c,d],\ p(x,y)=0\}\subset [a,b]\times [c,d].$$
\end{lemma}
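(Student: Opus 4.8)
The plan is to reduce the statement to a uniform bound, over $y\in[c,d]$, on the absolute values of the real roots of the one–variable polynomials $x\mapsto p(x,y)$; the compactness of $[c,d]$ together with the non-vanishing of $a_n$ is exactly what makes such a uniform bound available. First I would note that for each fixed $y\in[c,d]$ the hypothesis $a_n(y)\neq0$ makes $p(x,y)=a_n(y)x^n+\cdots+a_0(y)$ a genuine polynomial of degree $n$ in $x$, and that the points of the set in question lying over $y$ are precisely the points $(x,y)$ with $x$ a real root of this polynomial. Hence it suffices to produce a constant $B$ with $|x|\le B$ for every such root and every $y\in[c,d]$, for then the choice $[a,b]=[-B,B]$ works.

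The main ingredient is the classical Cauchy root bound: any root $\zeta$ of $c_nt^n+\cdots+c_0$ with $c_n\neq0$ satisfies $|\zeta|\le 1+\big(\max_{0\le i\le n-1}|c_i|\big)/|c_n|$. Applying this with $c_i=a_i(y)$ gives a pointwise bound; to make it uniform I would invoke compactness. Since the coefficient functions are continuous (indeed smooth, as $a_i(y)=\tfrac{1}{i!}\,\partial_x^i p(0,y)$), on the compact interval $[c,d]$ the minimum $m:=\min_{y\in[c,d]}|a_n(y)|$ is attained and is strictly positive, because $a_n$ has no zero there, while $M:=\max_{0\le i\le n-1}\max_{y\in[c,d]}|a_i(y)|$ is finite. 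Therefore every real root $x$ of $p(\cdot,y)$ with $y\in[c,d]$ satisfies $|x|\le 1+M/m=:B$, a bound independent of $y$, and the inclusion into $[-B,B]\times[c,d]$ follows. (If $n=0$ the set over $[c,d]$ is empty, since $a_0=a_n$ does not vanish there, and any interval serves.)

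I do not expect a genuine obstacle: the only delicate point is the passage from the pointwise to the uniform bound, which rests on the elementary fact that a continuous function with no zeros on a compact set is bounded away from zero. For completeness I would include the short proof of the Cauchy bound: if $|\zeta|>1$, then from $c_n\zeta^n=-\sum_{i<n}c_i\zeta^i$ and the estimate $\sum_{i=0}^{n-1}|\zeta|^i<|\zeta|^n/(|\zeta|-1)$ one obtains $|c_n|(|\zeta|-1)<\max_{i<n}|c_i|$, hence $|\zeta|<1+\big(\max_{i<n}|c_i|\big)/|c_n|$; and if $|\zeta|\le1$ the bound is immediate.
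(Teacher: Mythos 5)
Your proof is correct and follows essentially the same route as the paper: both reduce the claim to a uniform bound on the real roots of $x\mapsto p(x,y)$ over $y\in[c,d]$, obtained from the compactness of $[c,d]$ and the non-vanishing of $a_n$ there. The only cosmetic difference is the specific root estimate used (you use the Cauchy bound $1+M/m$, while the paper uses a Fujiwara-type bound $1+\sum_{k=1}^n(nA_{n-k})^{1/k}$ with $A_i=\sup_{[c,d]}|a_i/a_n|$), which does not change the substance of the argument.
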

\begin{proof}
Define 
$$
A_i=\sup_{y\in[c,d]}\left|\frac{a_i(y)}{a_n(y)}\right|,\ i=0,1,\ldots,n-1,\ \ \ \ \ \ \ \ \ \ \  B=1+\sum_{k=1}^n\left(nA_{n-k}\right)^{1/k}.
$$
If $y\in[c,d]$ and $|x|>B$, then 
$$
p(x,y)=a_n(y)x^n\left(1+\frac{a_{n-1}(y)}{a_n(y)}\frac{1}{x}+\cdots+\frac{a_0(y)}{a_n(y)}\frac{1}{x^n}\right)\neq 0.
$$
Taking $[a,b]=[-B,B]$, the lemma follows.
\end{proof}

\begin{proposition}\label{Level_Sets_2ttt}
Let $p:\R^2\to\R$ be a smooth submersion with the expression
$$
p(x,y)=A(y)x^2+B(y)x+C(y).
$$
If
\begin{enumerate}
\item $A(y)\neq0$, $\forall y\in\R$,
\item $\Delta(y)=B(y)^2-4A(y)C(y)$ is a \emph{polynomial} with \emph{odd} degree,
\end{enumerate}
then $p^{-1}\{0\}$ is connected.
\end{proposition}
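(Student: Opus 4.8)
The plan is to study $p^{-1}\{0\}$ through its projection onto the $y$-axis. Since $A(y)\neq0$ for every $y$, for fixed $y$ the equation $p(x,y)=0$ is a genuine quadratic in $x$, whose real solutions are $x_\pm(y)=\frac{-B(y)\pm\sqrt{\Delta(y)}}{2A(y)}$ and which has solutions precisely when $\Delta(y)\geq0$: two points if $\Delta(y)>0$, one (double) point if $\Delta(y)=0$, and none if $\Delta(y)<0$. Thus a vertical line $\{y=\mathrm{const}\}$ meets $p^{-1}\{0\}$ exactly over the set $S=\{y\in\R:\Delta(y)\geq0\}$, and the whole level set is the union of the two branch-arcs $\{(x_\pm(y),y):y\in S\}$. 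The submersion hypothesis will enter only through Lemma \ref{unbounded}.

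The first key step is to show that $S$ is connected. Since $\Delta$ is a polynomial of \emph{odd} degree, $\Delta(y)\to+\infty$ at one end of $\R$ and $\Delta(y)\to-\infty$ at the other; hence $S$ contains a ray and is bounded on the opposite side, so $S$ is unbounded in exactly one direction. Suppose $S$ were disconnected. Being closed, $S$ would then possess a bounded connected component $K$ (a compact interval, possibly a single point), separated from the rest of $S$ by an open interval on which $\Delta<0$. Over $K$ the level set is nonempty, and by Lemma \ref{zero_interval} (applicable since $A(y)\neq0$ on all of $\R$) it is bounded in the $x$-direction as well, hence a nonempty bounded subset of $p^{-1}\{0\}$. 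Moreover, since the $y$-projection of any connected subset of $p^{-1}\{0\}$ is connected and therefore lands inside a single component of $S$, the part of $p^{-1}\{0\}$ lying over $K$ is a union of \emph{entire} connected components of $p^{-1}\{0\}$. Each of these is bounded, contradicting Lemma \ref{unbounded}. Therefore $S$ is a single interval, say $S=[c,+\infty)$ (the case $(-\infty,c]$ being symmetric).

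Finally I would verify connectedness of $p^{-1}\{0\}$ over this single interval. On the open ray $(c,+\infty)$ the two branches $x_+(y)$ and $x_-(y)$ are continuous, and as $y\to c^+$ we have $\Delta(y)\to0$, so both branches converge to the common value $-B(c)/(2A(c))$; thus the two arcs $\{(x_\pm(y),y):y\geq c\}$ share the point $\big(-B(c)/(2A(c)),c\big)$. Being a union of two connected sets with a point in common, $p^{-1}\{0\}$ is connected, as desired.

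I expect the main obstacle to be the middle paragraph: converting the heuristic ``a bounded component of $S$ produces an oval, which Lemma \ref{unbounded} forbids'' into a rigorous argument. The two delicate points are (i) verifying that the portion of $p^{-1}\{0\}$ over a bounded component $K$ really is a union of \emph{whole} connected components, so that Lemma \ref{unbounded} may be applied to each—this uses continuity of the $y$-projection together with the fact that $\Delta<0$ on the gaps isolates $K$; and (ii) invoking Lemma \ref{zero_interval} correctly to obtain boundedness in $x$ over $K$, where one uses that $A$ never vanishes so its hypothesis holds on every compact $y$-interval.
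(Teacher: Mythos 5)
Your argument is correct, but it is organized rather differently from the paper's. The paper never isolates connectedness of the shadow $S=\{y:\Delta(y)\geq 0\}$ as a separate step: it takes $c$ to be the extreme root of $\Delta$ (so $p^{-1}\{0\}\subset\R\times[c,\infty)$ and there is a unique zero $(x_c,c)$ at that height), lets $\Gamma$ be the leaf through $(x_c,c)$, and shows that any zero $Q=(q_1,q_2)$ with $q_2>c$ lies on $\Gamma$; the mechanism is that, by Lemma \ref{zero_interval}, the zeros over $[c,q_2]$ sit in a compact box, by Lemma \ref{unbounded} both ends of $\Gamma$ must escape that box, and the only available exit is the top edge $\R\times\{q_2\}$, so $\Gamma$ meets that line in two distinct points, which must be the two roots of the quadratic $p(\cdot,q_2)=0$, one of them being $q_1$. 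You use the same two lemmas, but deploy them once, to exclude a bounded leaf sitting over a putative bounded component of $S$, and then finish with the elementary observation that $p^{-1}\{0\}$ is the union of the two continuous branches $x_\pm$ over the single ray $S$, glued at the vertex $y=c$ where $\Delta$ vanishes. Your version buys an explicit global picture of the level set (two arcs meeting at one point) and a final step that needs only the quadratic formula and continuity of $\sqrt{\Delta}$ on $\{\Delta\geq0\}$; the paper's version buys brevity, since connectedness of $S$ falls out as a byproduct of $\Gamma$ sweeping every height between $c$ and $q_2$. The two delicate points you flag are indeed the right ones and are handled correctly: the $y$-projection of a leaf is connected and therefore lies in a single component of $S$ (so the zeros over a bounded component form a union of whole leaves, each bounded, contradicting the unboundedness of leaves noted after Lemma \ref{unbounded}), and Lemma \ref{zero_interval} applies over any compact $y$-interval because $A$ never vanishes. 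One cosmetic remark: the branches $x_\pm$ are continuous on all of $S$, including interior points where $\Delta$ merely touches $0$, so the restriction to the open ray in your last paragraph is unnecessary.
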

\begin{proof}
We  suppose that the leader coefficient of $\Delta(y)$ is positive. The proof in the other case is analogous. 
Therefore there exists $c\in\R$ such that $\Delta(c)=0$ and $\Delta(y)<0$, $\forall y< c$. 
In particular,  $p^{-1}\{0\}\subset \R\times[c,\infty)$ and there exists exactly one $x_c\in\R$ such that  $p(x_c,c)=0$. 
Let $\Gamma$ be the connected component of $p^{-1}\{0\}$ which contains $(x_c,c)$ and consider $Q=(q_1,q_2)\in\R\times(c,\infty)$ such that $p(Q)=0$. It is clearly enough to prove that $Q\in\Gamma$.

By Lemma \ref{zero_interval}, there exists an  interval $[a,b]$ such that 
\begin{equation}\label{imagem_inversa_limitada}
p^{-1}\{0\}\cap\left(\R\times[c,q_2]\right)\subset(a,b)\times[c,q_2].
\end{equation}
Now by Lemma \ref{unbounded}, both ends of $\Gamma$ must escape the compact $[a,b]\times[c,q_2]$. 
Therefore, by \eqref{imagem_inversa_limitada}, $\Gamma$ will cut the line $\R\times\{q_2\}$ in two points, say $(x_1,q_2)$ and $(x_2,q_2)$. 
Since $\Gamma$ does not have self intersections, $x_1\neq x_2$. 
In particular $\Delta(q_2)>0$ and $q_1\in\{x_1,x_2\}$, hence $Q\in\Gamma$.    
\end{proof}

\begin{proposition}\label{Level_Sets_22tt}
Let $p:\R^2\to\R$ be the smooth submersion 
$$
p(x,y)=A(y)x^2+B(y)x+C(y).
$$
If
\begin{enumerate}
\item $A(y)=0$ for exactly one $y\in\R$,
\item\label{segundo} When $A(y)=0$, there exists exactly one $x\in\R$ such that $p(x,y)=0$,
\item\label{terceiro} $\Delta(y)=B(y)^2-4A(y)C(y)$ is a \emph{polynomial} with \emph{even} degree and its leader coefficient is \emph{negative},
\end{enumerate}
then $p^{-1}\{0\}$ is connected.
\end{proposition}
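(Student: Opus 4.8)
The plan is to mirror the strategy of Proposition \ref{Level_Sets_2ttt}, but now exploiting the opposite geometry: the set of admissible heights is \emph{bounded}, while $p^{-1}\{0\}$ becomes unbounded in the $x$-direction over the single exceptional height where $A$ vanishes. First I would fix notation: let $y_0$ be the unique real number with $A(y_0)=0$. Since $p(x,y_0)=B(y_0)x+C(y_0)$, hypothesis \ref{segundo} forces $B(y_0)\neq 0$, so there is a unique $x_0\in\R$ with $(x_0,y_0)\in p^{-1}\{0\}$, and moreover $\Delta(y_0)=B(y_0)^2>0$. By hypothesis \ref{terceiro}, $\Delta$ is a polynomial of even degree with negative leading coefficient, so $\Delta(y)\to-\infty$ as $y\to\pm\infty$; hence $S=\{y\in\R\mid\Delta(y)\geq 0\}$ is compact and nonempty, with $y_0$ in its interior. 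For $y\neq y_0$ the equation $p(x,y)=0$ is a genuine quadratic in $x$, solvable over $\R$ exactly when $\Delta(y)\geq 0$, so the projection of $p^{-1}\{0\}$ to the $y$-axis is contained in $S$.

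The crux of the argument, which I expect to be the main obstacle, is to show that $S$ is a \emph{single} interval $[c,d]$ with $y_0\in(c,d)$; without the submersion hypothesis, $\{\Delta\ge0\}$ could perfectly well be disconnected. Here I would argue by contradiction, invoking Lemma \ref{unbounded}. Suppose $I$ is a connected component of the compact set $S$ with $y_0\notin I$. Then $A$ does not vanish on $I$, and $\Delta<0$ on punctured one-sided neighborhoods of the endpoints of $I$; consequently $p^{-1}\{0\}\cap(\R\times I)$ agrees with $p^{-1}\{0\}$ over an open interval containing $I$, so it is open as well as closed in $p^{-1}\{0\}$, it is nonempty (as $\Delta\geq 0$ on $I$), and by Lemma \ref{zero_interval} (applied on $I$, where the leading coefficient $A$ is nonzero) it is \emph{bounded}. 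This contradicts Lemma \ref{unbounded}, by which every connected component of $p^{-1}\{0\}$ is unbounded. Hence $S=[c,d]$, and $\Delta(y_0)>0$ places $y_0$ in its interior, so $c\neq y_0\neq d$ and $A(c)A(d)\neq 0$; by continuity $\Delta(c)=\Delta(d)=0$.

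Next I would describe $p^{-1}\{0\}$ over $[c,d]$ via its two root branches $x_\pm(y)=\bigl(-B(y)\pm\sqrt{\Delta(y)}\bigr)/\bigl(2A(y)\bigr)$, defined for $y\in[c,d]\setminus\{y_0\}$. Rationalizing shows the identity $x_+=-2C/\bigl(B+\sqrt{\Delta}\bigr)$, and likewise for $x_-$ with $B-\sqrt{\Delta}$. Consequently one of the two branches, the \emph{finite branch} $\xi$ (namely $x_+$ if $B(y_0)>0$ and $x_-$ if $B(y_0)<0$), has the form $-2C(y)/\bigl(B(y)+\operatorname{sgn}(B(y_0))\sqrt{\Delta(y)}\bigr)$ whose denominator equals $2B(y_0)\neq 0$ at $y_0$; thus $\xi$ extends continuously across $y_0$ with $\xi(y_0)=x_0$, and since off $y_0$ it agrees with a fixed choice $\bigl(-B\pm\sqrt{\Delta}\bigr)/(2A)$ (continuous because $A\neq0$ and $\Delta\ge0$ there), it is continuous on all of $[c,d]$. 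The other branch $\eta$, defined on $[c,d]\setminus\{y_0\}$, has a nonzero limit in its numerator while its denominator $2A(y)\to0$ as $y\to y_0$, so $|\eta(y)|\to\infty$; and because $\Delta(c)=\Delta(d)=0$ the two branches coincide at the endpoints, giving $\eta(c)=\xi(c)$ and $\eta(d)=\xi(d)$.

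Finally I would assemble connectedness by gluing. The finite branch traces a connected arc $G_\xi=\{(\xi(y),y):y\in[c,d]\}$, while the divergent branch splits into the two connected pieces $G_\eta^-=\{(\eta(y),y):y\in[c,y_0)\}$ and $G_\eta^+=\{(\eta(y),y):y\in(y_0,d]\}$, which share the points $(\xi(c),c)$ and $(\xi(d),d)$ with $G_\xi$, respectively. Since the quadratic (respectively affine, at $y_0$) nature of $p(\cdot,y)$ guarantees $p^{-1}\{0\}=G_\xi\cup G_\eta^-\cup G_\eta^+$, and each of the three connected pieces meets the connected set $G_\xi$, the level set $p^{-1}\{0\}$ is connected, as desired. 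I expect the only genuinely delicate point to be the reduction of the second paragraph, where all three hypotheses conspire through Lemma \ref{unbounded} to rule out spurious bounded components over the rest of $S$.
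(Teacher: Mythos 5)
Your proof is correct, but it takes a genuinely different route from the paper's. The paper argues purely topologically: it takes the component $\Gamma$ through the unique zero $P$ on the exceptional line where $A$ vanishes, uses Lemmas \ref{zero_interval} and \ref{unbounded} to show that $\Gamma$ must have points strictly on both sides of that line (otherwise some horizontal line would meet $\Gamma$ in four points, impossible for a quadratic in $x$), and then shows that any other zero $Q$ lies on $\Gamma$ because the component of $Q$ is forced to cross a horizontal line $\R\times\{y_2\}$, with $y_2$ strictly between the exceptional height and the height of $Q$, in two distinct points --- one of which must then be the point of $\Gamma$ already sitting on that line, since the quadratic $p(x,y_2)=0$ has at most two roots. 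You instead solve the quadratic explicitly and decompose $p^{-1}\{0\}$ into a continuous finite branch over all of $[c,d]=\{\Delta\geq 0\}$ together with two divergent arcs attached to it at the endpoints $c$ and $d$, where $\Delta$ vanishes. The price of your approach is the extra step showing that $\{\Delta\geq 0\}$ is a single interval --- a point the paper never needs, since its crossing-count argument is local to the exceptional height --- but your clopen-plus-bounded argument for that step (a nonempty, relatively clopen, bounded piece of $p^{-1}\{0\}$ would contain a bounded leaf, contradicting Lemma \ref{unbounded}) is a clean and correct use of the same tools. What you gain is an explicit global picture of the level set (one arc with two unbounded tails), with the continuity of the finite branch across the exceptional height handled neatly by the rationalized formula $x=-2C/\bigl(B\pm\sqrt{\Delta}\bigr)$; what the paper's version gains is brevity and independence from the quadratic formula. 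Both proofs ultimately rest on the unboundedness of leaves, just deployed at different points.
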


\begin{proof}
Let $y_1\in\R$ such that $A(y_1)=0$. By item \ref{terceiro}, there exist $y_0,y_3\in\R$, with $y_0<y_1<y_3$, such that if $y\leq y_0$ or if $y\geq y_3$, $\Delta(y)<0$. 
Hence 
\begin{equation}\label{duh}
p^{-1}\{0\}\subset\R\times(y_0,y_3).
\end{equation}
By item \ref{segundo}, there exists exactly one point $P$ in the line $\R\times\{y_1\}$ such that $p(P)=0$. 
Let $\Gamma$ be the connected component of $p^{-1}\{0\}$ which contains $P$. 

\emph{We assert that $\Gamma$ can not be entirely contained in $\R\times(y_0,y_1]$ nor in $\R\times[y_1,y_3)$}. 
Indeed, if $\Gamma\subset\R\times[y_1,y_3)$, then there exists $y_2>y_1$ such that (using lemmas \ref{zero_interval} and \ref{unbounded}) there exist four different solutions of $p(x,y_2)=0$, a contradiction since this is a quadratic equation in $x$. 
Similar contradiction can be obtained if we suppose $\Gamma\subset(y_0,y_1]$.

Now take $Q=(\overline{x},\overline{y})\in\R\times[y_1,y_3]$ such that $p(Q)=0$. We will prove that $Q\in\Gamma$. 
In fact, by the assertion above, there exists $(x_2,y_2)\in\Gamma$ with $y_1<y_2<\overline{y}$. 
Thus by Lemma \ref{zero_interval}, there exists an interval $[a,b]$ such that $p^{-1}\{0\}\cap\left(\R\times[y_2,y_3]\right)\subset[a,b]\times[y_2,y_3]$.
Then by \eqref{duh} and Lemma \ref{unbounded}, the connected component of $p^{-1}\{0\}$ which contains $Q$ must cut the line $\R\times\{y_2\}$ in two different points. 
Since $p(x,y_2)=0$ is a quadratic equation in $x$, one of these points must be $(x_2,y_2)$, and then $Q\in\Gamma$. 

The same can be done if we take $Q\in\R\times[y_0,y_1]$ such that $p(Q)=0$. Thus $p^{-1}\{0\}$ is connected. 
\end{proof}

To prove propositions \ref{Level_Sets_2ttt} and \ref{Level_Sets_22tt}, we used the  properties of the discriminant of a quadratic equation.
For the next proposition, which already appeared in \cite{BS}, let us recall the properties of the discriminant of a cubic equation
$$
x^3+Ax^2+Bx+C=0.
$$
Take $P=B-A^2/3$ and $Q=C-AB/3+2A^3/27$, and define the \emph{discriminant}  of the above equation by $D=Q^2/4 + P^3/27$. We have that  
\begin{align*}
\textrm{if } D<0, & \textrm{ the equation has three distinct, real solutions,}\\
\textrm{if } D=0, & \textrm{ the equation has three real solutions, with two being equal,}\\
\textrm{if } D>0, & \textrm{ the equation has one real and two complex solutions.}
\end{align*}
As a consequence, we obtain 
\begin{proposition}\label{Level_Sets_3}
Let $p:\R^2\to\R$ be the smooth submersion 
$$
p(x,y)=x^3+A(y)x^2+B(y)x+C(y).
$$
If the discriminant $D(y)$ of the equation $p(x,y)=0$ is a \emph{polynomial} with \emph{even} degree and with \emph{positive} leader coefficient, then $p^{-1}\{0\}$ is connected.
\end{proposition}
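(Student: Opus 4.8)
The plan is to exploit the sign of the cubic discriminant. By the properties recalled just before the statement, for a fixed $y$ the equation $p(x,y)=0$ has exactly one real solution precisely when $D(y)>0$ (when $D(y)=0$ there are two distinct real points and when $D(y)<0$ there are three). Since $D$ has even degree with positive leading coefficient, $D(y)\to+\infty$ as $y\to\pm\infty$, so I would first fix levels $y_0<y_3$ with $D(y)>0$ for all $y\le y_0$ and all $y\ge y_3$. On each of the half-planes $\{y\ge y_3\}$ and $\{y\le y_0\}$ the fibre of $p^{-1}\{0\}$ over a horizontal line is a single point, and this point is a simple root of the cubic, so $\partial p/\partial x\ne 0$ there; by the implicit function theorem it depends smoothly on $y$. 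Hence $\Omega_+:=p^{-1}\{0\}\cap\{y\ge y_3\}$ and $\Omega_-:=p^{-1}\{0\}\cap\{y\le y_0\}$ are each the graph of a smooth function of $y$, in particular connected (closed) rays.

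Next I would analyse the global components. By Lemma \ref{unbounded} every connected component $\Gamma$ of $p^{-1}\{0\}$ is a connected $1$-manifold without boundary that is unbounded in both directions; being non-compact it is homeomorphic to $\R$ and has exactly two ends. The leading coefficient of $p$ in $x$ is the constant $1$, so Lemma \ref{zero_interval} applies on every $y$-interval and shows that $\Gamma\cap(\R\times[y_0,y_3])$ is bounded. Consequently $\Gamma$ cannot be contained in the strip $\R\times[y_0,y_3]$, so it must meet $\{y>y_3\}$ or $\{y<y_0\}$; since a connected subset of $p^{-1}\{0\}$ meeting $\Gamma$ lies in $\Gamma$, this forces $\Gamma\supseteq\Omega_+$ or $\Gamma\supseteq\Omega_-$. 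In particular there are at most two components.

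It then remains to glue the two rays. Let $\Gamma_+$ be the component containing $\Omega_+$. Because $\Gamma_+\cap\{y\ge y_3\}=\Omega_+$ is a closed, unbounded, connected subset of the line $\Gamma_+$, it is exactly one of the two halves of $\Gamma_+$, and the complementary half is an unbounded end lying entirely in $\{y<y_3\}$. By Lemma \ref{zero_interval} the $y$-coordinate along this end cannot stay bounded, so the end reaches $\{y<y_0\}$ and hence meets $\Omega_-$. Therefore $\Gamma_+\supseteq\Omega_-$, i.e. the components of $\Omega_+$ and $\Omega_-$ coincide. Combined with the previous paragraph, every component equals this single component, so $p^{-1}\{0\}$ is connected.

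The main obstacle is the control of the ends: one must guarantee that along an unbounded end the $y$-coordinate genuinely escapes while $x$ may not, rather than $x\to\infty$ with $y$ oscillating in a bounded range. This is precisely where Lemma \ref{zero_interval} is decisive, since over any bounded $y$-interval the whole zero set is confined to a bounded box, so unboundedness must be realised through $|y|\to\infty$. A minor but necessary point is the behaviour at the transition levels, which is why I take $y_0,y_3$ strictly inside the region $D>0$, ensuring the unique root there is simple and that $\Omega_\pm$ are genuine smooth half-lines.
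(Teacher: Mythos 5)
Your argument is correct and follows essentially the same route as the paper's (which is only sketched there): isolate a strip $[y_0,y_3]$ outside of which $D>0$, show the two outer pieces of $p^{-1}\{0\}$ are connected, and join them through the strip using Lemma \ref{zero_interval} together with the unboundedness of leaves from Lemma \ref{unbounded}. Your implicit-function-theorem description of the outer pieces as smooth graphs simply supplies the detail the paper leaves to the reader.
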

\begin{proof}
The proof is similar to the proof of propositions \ref{Level_Sets_2ttt},  and \ref{Level_Sets_22tt}, thus we give just the outline. 
Take $[c,d]$ such that $D(y)>0$, $\forall y\notin[c,d]$. 
Prove first that $p^{-1}\{0\}\cap\big(\R\times[d,\infty)\big)$ and $p^{-1}\{0\}\cap\big(\R\times(-\infty,c]\big)$ are non empty connected sets. 
Then conclude these two sets connect  each other by a curve in $[a,b]\times[c,d]$, where $[a,b]$ is given by Lemma \ref{zero_interval}.  

\end{proof}

\section{The proof of Theorem \ref{Classification}}\label{section_classification}
In order to prove Theorem \ref{Classification}, we shall use next corollary, which is an immediate consequence of the classification of homogeneous polynomials of degree $4$ given in Theorem 2.6 of \cite{CL}.

\begin{corollary}\label{second_classif}
Any polynomial of degree exactly $4$ can be transformed by means of a \emph{linear change of variables} in one of the following.
\begin{align*}
& (I) && p_3+x^4+6\mu x^2y^2+y^4,  && \mu<-{1}/{3},\\
& (II) && p_3+\alpha\left(x^4+6\mu x^2y^2+y^4\right),  && \alpha=\pm 1,\ \mu>-1/3,\ \mu\neq 1/3,\\
& (III) && p_3+x^4+6\mu x^2y^2-y^4, && \mu\in\R, \\
& (IV)&& p_3+\alpha y^2\left(6x^2+y^2\right), && \alpha=\pm 1,\\
& (V) && p_3+\alpha y^2\left(6x^2-y^2\right), && \alpha=\pm 1,\\
& (VI) && p_3+\alpha\left(x^2+y^2\right)^2, && \alpha=\pm 1,\\
& (VII) && p_3+6\alpha x^2y^2, && \alpha=\pm 1,\\
& (VIII) && p_3+4x^3y, && \\
& (IX) && p_3+\alpha x^4, && \alpha=\pm 1,
\end{align*}
where $p_3$ is a polynomial of degree less than or equal to $3$.
\end{corollary}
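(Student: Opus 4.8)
The plan is to reduce the statement to the classification of binary quartic \emph{forms} provided by Theorem 2.6 of \cite{CL}, which is exactly what makes the corollary immediate. Writing $p=p_4+\tilde p$, where $p_4$ denotes the homogeneous part of $p$ of degree $4$ and $\tilde p$ collects all terms of degree at most $3$, the hypothesis that $p$ has degree exactly $4$ guarantees $p_4\not\equiv 0$. The first observation I would make explicit is that the decomposition into homogeneous components is respected by linear changes of variables: if $T:\R^2\to\R^2$ is an invertible linear map, then $p\circ T=(p_4\circ T)+(\tilde p\circ T)$, where $p_4\circ T$ is again homogeneous of degree $4$ (and nonzero, since $T$ is invertible and $p_4\not\equiv 0$), while $\tilde p\circ T$ is a polynomial of degree at most $3$. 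Hence the degree-$4$ homogeneous part of $p\circ T$ is exactly $p_4\circ T$, and a linear change of variables acts on $p$ only through its action on the leading form $p_4$.

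With this in hand, I would apply Theorem 2.6 of \cite{CL} to the binary quartic form $p_4$: up to an invertible linear change of variables, $p_4$ can be brought to one of the nine normal forms
$$
x^4+6\mu x^2y^2+y^4,\quad \alpha(x^4+6\mu x^2y^2+y^4),\quad x^4+6\mu x^2y^2-y^4,\quad \alpha y^2(6x^2+y^2),
$$
$$
\alpha y^2(6x^2-y^2),\quad \alpha(x^2+y^2)^2,\quad 6\alpha x^2y^2,\quad 4x^3y,\quad \alpha x^4,
$$
under the corresponding constraints on $\mu$ and $\alpha=\pm1$ listed in the statement. Choosing $T$ to be the linear change that realizes this normalization and relabelling $\tilde p\circ T$ as $p_3$ — a polynomial of degree at most $3$ by the previous paragraph — yields $p\circ T=p_3+(\text{one of the nine quartic forms})$, which is precisely the asserted list $(I)$–$(IX)$. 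Note that no normalization of $\tilde p$ is required, since the statement only asks $p_3$ to have degree at most $3$; this is what keeps the argument short.

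The only real content, and the step I expect to require the most care, is matching the output of \cite{CL} to the nine cases: one must check that the normal forms of binary quartics produced there are exactly the homogeneous quartics appearing in $(I)$–$(IX)$ and that the parameter ranges coincide (for instance $\mu<-1/3$ in $(I)$ versus $\mu>-1/3$, $\mu\neq1/3$ in $(II)$). These ranges encode the real-versus-complex and multiplicity structure of the four roots of $p_4$ — four distinct real roots, four complex roots, two real and two complex, and the various configurations containing a double, triple, or quadruple factor — and the verification amounts to confirming that the enumeration in \cite{CL} is complete and that our representatives agree with theirs up to the linear action. Since \cite{CL} supplies precisely this enumeration, nothing beyond the homogeneity bookkeeping above is needed.
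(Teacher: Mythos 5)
Your proposal is correct and follows exactly the route the paper takes: the paper derives the corollary as an immediate consequence of the classification of homogeneous quartics in Theorem 2.6 of \cite{CL}, applied to the leading form $p_4$, with the lower-degree part simply carried along as $p_3$. Your explicit verification that linear changes of variables respect the homogeneous decomposition is the (routine) bookkeeping the paper leaves implicit.
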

We will work with the polynomials of Corollary \ref{second_classif} showing that each of them is not a submersion, or has all its level sets connected, or is equivalent (in the sense of Definition \ref{def:equi}) to one of the cases of Theorem \ref{Classification}. We divide the 9 cases in 5 groups, each of them in one of the subsections bellow. 
Theorem \ref{Classification} will be a direct consequence of the propositions \ref{I_III_VIII_IX}, \ref{Pro_case_V}, \ref{Pro_case_VII}, \ref{Pro_Case_IV} and \ref{Pro_case_II_VI} contained in the subsections \ref{primeira}, ..., \ref{ultima}, respectively.

The arguments use the results of sections \ref{section_common_zeros} and \ref{section_level_sets}. The calculations of the subresultants and of the discriminants  were made using Maple.

To stablish notation, we  write the polynomial $p_3$ in the following form
$$
p_3=a_{10}x+a_{01}y+a_{20}x^2+a_{11}xy+a_{02}y^2+a_{30}x^3+a_{21}x^2y+a_{12}xy^2+a_{03}y^3.
$$

\subsection{Cases (I), (II), (III)  and (VI)}\label{primeira}

\begin{proposition}\label{I_III_VIII_IX}
The polynomials of cases (I), (II), (III) and (VI) are not submersions.
\end{proposition}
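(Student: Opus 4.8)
The plan is to reduce ``$p$ is not a submersion'' to the existence of a common real zero of $\partial p/\partial x$ and $\partial p/\partial y$, and then to detect such a zero with the tools of Section \ref{section_common_zeros}. Writing a representative of cases (I), (II), (III), (VI) as $p=p_3+h$, where $h(x,y)=\alpha(x^4+6\mu x^2y^2+\varepsilon y^4)$ is the degree-$4$ part ($\varepsilon=-1$ only in case (III), $\varepsilon=1$ otherwise, and $\alpha=\pm1$), a direct computation gives
\[
\frac{\partial p}{\partial x}=(12\alpha\mu x+a_{12})y^2+(a_{11}+2a_{21}x)y+\big(4\alpha x^3+3a_{30}x^2+2a_{20}x+a_{10}\big),
\]
\[
\frac{\partial p}{\partial y}=4\alpha\varepsilon\,y^3+3a_{03}y^2+(12\alpha\mu x^2+2a_{12}x+2a_{02})y+(a_{21}x^2+a_{11}x+a_{01}).
\]
Thus $\partial p/\partial x$ has the shape $M(x)y^2+a(x)y+b(x)$ with $M(x)=12\alpha\mu x+a_{12}$, while $\partial p/\partial y$ has the shape $N(x)y^3+c(x)y^2+d(x)y+e(x)$ with $N(x)=4\alpha\varepsilon$ a nonzero constant. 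This is precisely the situation of Theorem \ref{raiz_23}, so I would set $R_0(x)=R_0(\partial p/\partial x,\partial p/\partial y,y)$ and try to verify its two hypotheses.

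For cases (I), (II), (III) I would argue as follows. Since $N(x)$ is a nonzero constant it has no real zeros, so hypothesis (1) of Theorem \ref{raiz_23} holds automatically. For hypothesis (2) the point is the \emph{top-degree part} of $R_0(x)$. Homogenizing $\partial p/\partial x$ and $\partial p/\partial y$ (both of total degree $3$) and eliminating $y$, the resultant is homogeneous of degree $9$ in the two remaining variables; hence $\deg R_0\le 9$ and the coefficient of $x^9$ equals the resultant, with respect to $y$, of the degree-$3$ leading forms $h_x=4\alpha x(x^2+3\mu y^2)$ and $h_y=4\alpha y(3\mu x^2+\varepsilon y^2)$. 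A computation (using Lemma \ref{Calculo_res} together with the Sylvester determinant, or Maple) gives
\[
R_0(h_x,h_y,y)=1024\,\alpha\,(9\mu^2-\varepsilon)^2\,x^9.
\]
In cases (I), (II), (III) one has $9\mu^2\neq\varepsilon$ (indeed $\mu<-1/3$ in (I), while $\mu>-1/3$ and $\mu\neq 1/3$ in (II) force $9\mu^2\neq 1$, and $9\mu^2+1>0$ always in (III)), so this coefficient is nonzero and $R_0$ has \emph{odd} degree $9$. An odd-degree polynomial changes sign, so letting $b_l$ denote its leading coefficient one has $b_lR_0(z)\to-\infty$ as $z\to-\infty$, and in particular there is $z\in\R$ with $b_lR_0(z)<0$; this is hypothesis (2). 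Theorem \ref{raiz_23} then produces $(\alpha_0,\beta_0)\in\R^2$ at which $\partial p/\partial x$ and $\partial p/\partial y$ both vanish, so $p$ is not a submersion.

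Case (VI) is exactly the degenerate one: here $\mu=1/3$, $\varepsilon=1$, so $9\mu^2-\varepsilon=0$, the coefficient above vanishes, and in fact $h_x$ and $h_y$ share the factor $x^2+y^2$, so the resultant carries no information about the degree-$9$ level. I would instead use coercivity: the degree-$4$ part is $\alpha(x^2+y^2)^2$, which dominates $p_3$, so $p(x,y)\to+\infty$ as $\|(x,y)\|\to\infty$ when $\alpha=1$ (respectively $\to-\infty$ when $\alpha=-1$). Hence $p$ attains a global minimum (respectively maximum) at some point of $\R^2$, and that point is a critical point of $p$; again $p$ is not a submersion.

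The main obstacle I expect is the second paragraph: establishing rigorously that the top-degree part of $R_0(x)$ is the stated odd-degree monomial \emph{independently of the coefficients of $p_3$}, that is, that the lower-order terms coming from $p_3$ neither raise the degree above $9$ nor contribute to the coefficient of $x^9$. This is exactly what the homogenization/weighted-degree bookkeeping guarantees, and it reduces the whole matter to the single finite computation of $R_0(h_x,h_y,y)$. Once that computation is in hand, the \emph{parity} of the degree does all the remaining work in verifying hypothesis (2), and only the exceptional value $9\mu^2=\varepsilon$ (case (VI)) has to be treated by hand.
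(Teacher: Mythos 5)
Your proof is correct. For cases (I), (II) and (III) it is essentially the paper's argument: the paper first normalizes all four cases to $p_3+x^4+\gamma x^2y^2+\theta y^4$ with $\theta=\pm1$ and $\gamma=6\mu$ (Lemma \ref{lemma_I_III_VIII_IX}), applies Theorem \ref{raiz_23} to the pair of partial derivatives viewed as a quadratic and a cubic in $x$, and reads off the leading coefficient $b_9=-64\theta(4\theta-\gamma^2)^2$ of the degree-$9$ subresultant; since $4\theta-\gamma^2=4(\theta-9\mu^2)$, this is (up to the harmless choice of eliminating $y$ rather than $x$, and up to sign) the same quantity as your $1024\alpha(9\mu^2-\varepsilon)^2$, and in both versions the odd degree of $R_0$ settles hypothesis (2) of Theorem \ref{raiz_23} immediately. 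Where you genuinely diverge is the degenerate case $9\mu^2=\varepsilon$, which is exactly case (VI): the paper stays inside the subresultant machinery and runs a cascade on $b_7$, $b_5$, $b_3$, imposing successive vanishing conditions on the coefficients of $p_3$ until the gradient factors explicitly and a common zero is exhibited by hand, whereas your coercivity argument --- the leading form $\alpha(x^2+y^2)^2$ is definite, so $\pm p$ is proper, attains a global extremum and hence has a critical point --- replaces that entire cascade with three lines and is the cleaner route; it is no accident that it is available precisely when the resultant of the leading forms degenerates. The one point deserving a sentence more of care in a final write-up is the claim that the coefficient of $x^9$ in $R_0$ equals the resultant of the leading cubic forms even though the leading coefficient $M(x)=12\alpha\mu x+a_{12}$ of $\partial p/\partial x$ in $y$ is not constant (and can vanish identically when $\mu=a_{12}=0$); the isobaric-weight bookkeeping you invoke does justify this, and the identity can in any case be confirmed by the same direct Maple computation the paper itself relies on, so this is a matter of exposition rather than a gap.
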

Next lemma will help us in the proof of Proposition \ref{I_III_VIII_IX}.
\begin{lemma}\label{lemma_I_III_VIII_IX}
The polynomials of cases (I), (II), (III) and (VI) are equivalent to
$$
p(x,y)=p_3(x,y)+x^4+\gamma x^2y^2+\theta y^4,\  \textnormal{ with } \theta=\pm 1.
$$
Moreover, if $\theta=1$, then $\gamma\neq -2$.
\end{lemma}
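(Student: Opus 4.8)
The plan is to observe that the equivalence of Definition~\ref{def:equi} lets us freely modify all terms of degree at most $3$: any affine change of coordinates, any additive constant, and any nonzero multiplicative constant keeps such terms inside the symbol $p_3$. Hence the entire content of the lemma is really a statement about the \emph{degree-$4$ homogeneous part} of each polynomial. In all four cases this homogeneous part has the special ``even'' shape $\alpha\big(x^4+6\mu x^2y^2+\epsilon y^4\big)$ with $\alpha=\pm1$ and $\epsilon=\pm1$: cases (I) and (III) already appear with $\alpha=1$ and $\epsilon=1,-1$ respectively, case (II) has $\epsilon=1$ with $\alpha=\pm1$, and in case (VI) one expands $(x^2+y^2)^2=x^4+2x^2y^2+y^4$, which is of this form with $\mu=1/3$, $\epsilon=1$.

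First I would normalize the leading coefficient. Applying the equivalence with $T=\mathrm{id}$, $N=0$ and target factor $M=\alpha$ (legitimate since $\alpha=\pm1\neq0$) multiplies the whole polynomial by $\alpha$; because $\alpha^2=1$, the quartic part becomes $x^4+6\mu x^2y^2+\epsilon y^4$, while the lower-order terms become $\alpha p_3$, still a polynomial of degree at most $3$. Thus each case is equivalent to $p_3+x^4+\gamma x^2y^2+\theta y^4$ with $\gamma=6\mu$ and $\theta=\epsilon=\pm1$, which is exactly the asserted normal form. Note that no genuine change of coordinates is needed here: a single rescaling of the target by $\pm1$ suffices.

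It then remains to verify the constraint ``if $\theta=1$ then $\gamma\neq-2$'', which amounts to excluding $\mu=-1/3$ whenever $\epsilon=1$. Here I would simply read off the parameter ranges: in case (I) the hypothesis $\mu<-1/3$ gives $\gamma=6\mu<-2$; in case (II) with $\epsilon=1$ the hypothesis $\mu>-1/3$ gives $\gamma=6\mu>-2$; and in case (VI) one has $\gamma=2$. In case (III) we have $\theta=\epsilon=-1$, so the constraint is vacuous. The main (and essentially only) subtle point is this bookkeeping: the forbidden value $\gamma=-2$, $\theta=1$ corresponds to $x^4-2x^2y^2+y^4=(x^2-y^2)^2$, a degenerate perfect square, and it is precisely this configuration that the strict inequalities $\mu<-1/3$ and $\mu>-1/3$ in the hypotheses of (I) and (II) rule out. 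I expect the computation of the quartic parts and the sign bookkeeping to be entirely routine, with no real obstacle beyond keeping track of which parameter values the classification in Corollary~\ref{second_classif} excludes.
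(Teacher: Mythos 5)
Your proposal is correct and follows essentially the same route as the paper: normalize by dividing out the factor $\alpha=\pm1$ in cases (II) and (VI) (which fixes the quartic part since $\alpha^2=1$), set $\gamma=6\mu$ (or $\gamma=2$ for (VI)) and $\theta=\pm1$, and read the exclusion $\gamma\neq-2$ when $\theta=1$ directly off the strict inequalities $\mu<-1/3$ and $\mu>-1/3$ in the classification. Your closing observation that $\gamma=-2$, $\theta=1$ gives the degenerate square $(x^2-y^2)^2$ matches the paper's remark that this configuration is a disguised case (VII).
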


\begin{remark}
We remark that if $\theta=1$ and $\gamma=-2$, then the change of coordinates $T(x,y)=(x-y,x+y)$ transforms $p(x,y)$ in case (VII).
\end{remark}

\begin{proof}
Polynomial (I) is counted above taking $\gamma=6\mu$ and $\theta=1$. Dividing polynomial (II) by $\alpha$ and taking $\gamma=6\mu$, we have that it is   also counted in the lemma, with $\theta=1$. 
Now case (III) is counted above with $\gamma=6\mu$ and $\theta=-1$. 
Finally, dividing polynomial (VI) by $\alpha$, it is in the form of the lemma by taking $\gamma=2$ and $\theta=1$.
\end{proof}
\begin{proof}[Proof of Proposition \ref{I_III_VIII_IX}]
We will analyse the polynomial of Lemma \ref{lemma_I_III_VIII_IX}.

Observe  $\frac{\partial p}{\partial y}$ and $\frac{\partial p}{\partial x}$ have the forms $M(y)x^2+a(y)x+b(y)$ and $N(y)x^3+c(y)x^2+d(y)x+e(y)$, respectively, with $N(y)\equiv 4$. 
With the notations of Theorem \ref{raiz_23}, observe 
$$
R_0\left(\frac{\partial p}{\partial y},\frac{\partial p}{\partial x},x\right)=b_0+b_1y+\cdots+b_8y^8+b_9y^9,
$$
with $b_9=-64\theta(4\theta-\gamma^2)^2$. If $b_9\neq0$, this theorem gives that there exists a common zero of  $\frac{\partial p}{\partial y}$ and $\frac{\partial p}{\partial x}$. 
Therefore $p$ is not a submersion. 
We suppose thus  $b_9=0$, i.e. $\theta=1$ and $\gamma=2$. In this case we get $b_8=0$ and $b_7=-576\left((a_{12}-a_{30})^2+(a_{21}-a_{03})^2\right)$. Then by Theorem \ref{raiz_23}, if $a_{12}\neq a_{30}$ or $a_{21}\neq a_{03}$, $p$ is not a submersion.

On the other hand, if $a_{30}=a_{12}$ and $a_{03}=a_{21}$, we have $b_6=0$ and $b_5=-16\big((a_{12}^2+4a_{02}-4a_{20}-a_{21}^2)^2+4(a_{12}a_{21}-2a_{11})^2\big)$. 
If $b_5\neq 0$, the same theorem guarantees that $p$ is not a submersion.

Now if $b_5=0$, i.e. $a_{02}=-a_{12}^2/4+a_{20}+a_{21}^2/4$ and $a_{11}=a_{12}a_{21}/2$, then  $b_4=0$ and $b_3=-\big((8a_{10}+a_{12}^3-4a_{20}a_{12})^2+
(a_{12}^2a_{21}-4a_{21}a_{20}+8a_{01})^2\big)$. Then, as above, if $b_3\neq 0$, $p$ is not a submersion, whereas if $a_{10}=-a_{12}^3/8+a_{20}a_{12}/2$ and $a_{01}=-a_{12}^2a_{21}/8+a_{20}a_{21}/2$, we get $\frac{\partial p}{\partial x}(x,y)=-(a_{12}+4x)\big(a_{12}^2-4a_{12}x-8x^2-4a_{20}-8y^2-4a_{21}y\big)/8$ and $\frac{\partial p}{\partial y}(x,y)=-(a_{21}+4y)\big(a_{12}^2-4a_{12}x-8x^2-4a_{20}-8y^2-4a_{21}y\big)/8$, which clearly have a common zero and so $p$ is not a submersion. 
\end{proof}

\subsection{Case (VIII)}
 
\begin{proposition}\label{Pro_case_V}
If a polynomial of case (VIII) is a submersion, it is equivalent to 
$$
p=y+a_{02}y^2+xy^3,\ \ a_{02}=0\textnormal{ or } 1.
$$
In particular, the level set $p=0$ is not connected.
\end{proposition}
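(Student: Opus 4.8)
The plan is to mimic the strategy of Proposition \ref{I_III_VIII_IX}. A polynomial of case (VIII) has the form $p=4x^3y+p_3$, and I would first record its partials as polynomials in $x$ with coefficients polynomial in $y$. A direct computation gives $\frac{\partial p}{\partial x}=(12y+3a_{30})x^2+(2a_{21}y+2a_{20})x+(a_{12}y^2+a_{11}y+a_{10})$, a quadratic in $x$, and $\frac{\partial p}{\partial y}=4x^3+a_{21}x^2+(2a_{12}y+a_{11})x+(3a_{03}y^2+2a_{02}y+a_{01})$, a cubic in $x$ whose leading coefficient is $N\equiv 4\neq 0$. Thus $p$ is a submersion precisely when these two polynomials have no common real zero, and they have exactly the shape handled by Theorem \ref{raiz_23} and Corollary \ref{raiz_23_Cor} (with the roles of $x$ and $y$ interchanged). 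Moreover, since $N\equiv 4$, the hypothesis that $R_0$ has no common zero with $N$ holds automatically throughout.

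Next I would compute $R_0(y)=R_0\!\left(\frac{\partial p}{\partial x},\frac{\partial p}{\partial y},x\right)=b_0+\cdots+b_ly^l$ with Maple and run the same bootstrap. The top coefficient is $b_7=15552\,a_{03}^2$ (degree $7$), coming from the diagonal term $M^3e^2$ of the Sylvester determinant; because $7$ is odd, $b_7\neq 0$ forces $b_7R_0$ to attain negative values, so Theorem \ref{raiz_23} produces a common zero of the two partials and $p$ is not a submersion. Hence being a submersion forces $a_{03}=0$. I would continue in this fashion, inspecting at each stage the new leading coefficient of $R_0$: when it has odd degree and is nonzero, Theorem \ref{raiz_23} again yields a common zero and forces a polynomial relation among the $a_{ij}$; when the leading coefficient has even degree — so that $b_lR_0$ need not change sign — I would instead invoke Corollary \ref{raiz_23_Cor}, locating a real root $z$ of $R_1$ with $b_lM(z)>0$ (here $M(y)=12y+3a_{30}$ is positive for large $y$) to again produce a common zero. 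Iterating, these constraints successively kill the mixed and higher coefficients $a_{03},a_{12},a_{21},a_{30},a_{11}$ and pin down the remaining ones.

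Once the bootstrap terminates, $p$ is reduced to a short normal form that is genuinely a submersion: its $\frac{\partial p}{\partial y}$ degenerates to a multiple of $x^3$, so the only candidate common zero $x=0$ is excluded by $\frac{\partial p}{\partial x}\neq 0$ there. I would then apply an explicit affine change of coordinates from Definition \ref{def:equi} — essentially the swap $(x,y)\mapsto(y,x)$ together with a translation and a scaling absorbing the factor $4$ — to bring $p$ into the form $y+a_{02}y^2+xy^3$, and a final rescaling of the surviving quadratic coefficient normalizes it to $a_{02}=0$ or $a_{02}=1$. The disconnectedness of $p^{-1}(0)$ is then immediate from the factorization $p=y\bigl(1+a_{02}y+xy^2\bigr)$: the zero set is the union of the line $\{y=0\}$ and the curve $\{1+a_{02}y+xy^2=0\}$, and the latter is disjoint from $\{y=0\}$ because $1+a_{02}y+xy^2=1\neq 0$ there, so $p^{-1}(0)$ has at least two connected components.

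The main obstacle I anticipate is the even-degree stages of the bootstrap, where the crude sign argument for $b_lR_0$ breaks down and one must instead control the $1$-subresultant $R_1$ through Corollary \ref{raiz_23_Cor}; this requires checking that $R_1$ has a real root lying in the region where $b_lM$ has the correct sign. The second difficulty is purely organizational: keeping track of the accumulated coefficient relations (feasible only with Maple) so that the system solves cleanly down to the two-parameter family that the affine normalization collapses to the stated representative.
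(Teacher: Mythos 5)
Your overall strategy --- eliminating $x$ from the two partials via the subresultants of Section \ref{section_common_zeros} and bootstrapping on the leading coefficients of $R_0$ --- is exactly the paper's, and your concluding steps (the degenerate form of $\frac{\partial p}{\partial y}$, the final swap-and-scale, and the disconnectedness of $p^{-1}\{0\}$ via the factorization $y(1+a_{02}y+xy^2)$ into two disjoint nonempty closed sets) are correct. But there is a genuine gap in the middle: you claim the bootstrap ``kills'' $a_{21}$ and $a_{30}$. It cannot, because these coefficients are not obstructed by the submersion condition --- they are removed by an affine change of coordinates. Concretely, $p_0=x+4x^3y$ is a submersion of case (VIII), hence so is $p_0(x+1,y+1)=4x^3y+4x^3+12x^2y+12x^2+12xy+13x+4y+4$, which has $a_{30}=4$ and $a_{21}=12$. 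For such a polynomial every $b_l$ that your bootstrap would use as an obstruction must vanish, yet no individual coefficient is forced to zero; the vanishing loci are translated copies of the surviving family, not coordinate hyperplanes. The paper avoids this by first proving Lemma \ref{V}: after dividing by $4$ and composing with the translation $T(x,y)=(x+a_{21}/3,\,y+a_{30})$, one may assume $a_{30}=a_{21}=0$, and only then does the resultant analysis (split into four normal forms) force $a_{03}=a_{12}=a_{02}=a_{11}=a_{01}=0$ and $a_{10}\neq 0$. Your plan is missing this preliminary normalization, and without it the case analysis does not close up as you describe.

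Two secondary points. First, the stages you flag as ``obstacles'' are where the real work lies and are not routine: the paper must locate a root $z$ of $R_1$ with a prescribed sign relative to $a_{12}$ so that $b_6M(z)>0$ before Corollary \ref{raiz_23_Cor} applies, and several degenerate subcases (e.g.\ $a_{10}=0$, or case \eqref{V_4}) are settled not by resultants at all but by exhibiting a line on which one partial vanishes identically while the other has odd degree. Second, a minor computational slip: the degree-$7$ coefficient of $R_0$ comes from the term $-M^3N^2f^2$ in \eqref{raiz_23_equa}, so it is $-12^3(3a_{03})^2=-15552\,a_{03}^2$, not $+15552\,a_{03}^2$; this does not affect the odd-degree sign argument, but it signals that the asserted values of the $b_l$ have not been verified.
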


To prove this proposition, we will first prove the following lemma.
\begin{lemma}\label{V}
The polynomials of case (VIII) are equivalent to one of the following.
\begin{align}
&p= a_{10}x+a_{01}y+a_{20}x^2+a_{11}xy+a_{02}y^2+     a_{12}xy^2+a_{03}y^3+x^3y,\ \label{V_1}\\
& a_{03}^2+a_{12}^2+a_{02}^2>0,\notag\\
&p= a_{10}x+a_{01}y+a_{20}x^2+a_{11}xy+ x^3y,\ \ \ \ \ 4a_{11}^3+27a_{01}^2\neq 0 \label{V_3}\\
&p= a_{10}x+     2y+a_{20}x^2     -3xy+ x^3y,\label{V_4}\\   
&p= a_{10}x+        a_{20}x^2+                                                    x^3y,\ \ \ \ \ a_{20}=0\textnormal{ or } 1,\label{V_5}
\end{align}
\end{lemma}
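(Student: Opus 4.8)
The plan is to reduce the general polynomial of case (VIII), namely $p=p_3+4x^3y$, to the four listed forms by composing affine changes of coordinates with the scalar operations $p\mapsto Mp+N$ allowed by Definition \ref{def:equi}. The guiding structural observation is that the only degree-$4$ monomial present is $4x^3y$, and an affine change produces from it only the monomials $x^3y,x^2y,xy,y,x^3,x^2,x,1$; in particular it never creates $y^2$, $xy^2$, $y^3$, nor any degree-$4$ monomial other than $x^3y$. Hence every reduced polynomial we encounter lies among the monomials $\{x,y,x^2,xy,y^2,xy^2,y^3,x^3y\}$, which is exactly the pool of monomials appearing in \eqref{V_1}--\eqref{V_5}.

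First I would apply a translation $x\mapsto x+s$, $y\mapsto y+t$. A direct expansion shows the coefficient of $x^3$ becomes $a_{30}+4t$ and that of $x^2y$ becomes $a_{21}+12s$ (these depend on $t$ and $s$ separately, so there is no interference), whence $t=-a_{30}/4$ and $s=-a_{21}/12$ annihilate both, and the constant term produced is absorbed by $N$. Since the coefficients of $xy^2$ and $y^3$ are unaffected by translation, and the quartic term cannot generate $y^2,xy^2,y^3$, the argument splits according to whether the triple $(a_{02},a_{12},a_{03})$ of the original $p_3$ vanishes. If $(a_{02},a_{12},a_{03})\neq(0,0,0)$, the reduced polynomial already has the shape of \eqref{V_1} with the coefficient of $x^3y$ equal to $4$; multiplying by $M=1/4$ normalizes that coefficient to $1$ while preserving $a_{02}^2+a_{12}^2+a_{03}^2>0$, giving \eqref{V_1}.

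In the complementary case $a_{02}=a_{12}=a_{03}=0$ the polynomial is linear in $y$, so it can be written $p=yP(x)+Q(x)$ with $P(x)=4x^3+a_{21}x^2+a_{11}x+a_{01}$ and $Q(x)=a_{30}x^3+a_{20}x^2+a_{10}x$. Here I would first depress $P$ via $x\mapsto x-a_{21}/12$, and then use the constant shift $y\mapsto y-a_{30}/4$, which replaces $Q$ by $Q+\mu P$ with $\mu=-a_{30}/4$ and so removes the $x^3$ term of $Q$; discarding the new constant term with $N$ yields $p=y(4x^3+a_{11}x+a_{01})+a_{20}x^2+a_{10}x$. The remaining tools are the diagonal scalings $x\mapsto\alpha x$, $y\mapsto\delta y$ and the scalar $M$, under which the root structure of the depressed cubic $P$ is preserved.

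Finally I would classify by that root structure. If $P$ has three distinct roots, normalizing the leading coefficient of $x^3y$ to $1$ turns $P$ into a monic depressed cubic $x^3+a_{11}x+a_{01}$ of nonzero discriminant, i.e. $4a_{11}^3+27a_{01}^2\neq0$, which is \eqref{V_3}. If $P$ has a genuine (necessarily real) double root, then since its roots sum to zero they are $r,r,-2r$ with $r\neq0$; the scaling $x\mapsto rx$ sends them to $1,1,-2$, and after adjusting signs so that the coefficient of $x^3y$ is $+1$ and the constant term of the cubic is $+2$, one reaches $x^3-3x+2=(x-1)^2(x+2)$, which is \eqref{V_4}. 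If $P$ has a triple root it must be $4x^3$ (so $a_{11}=a_{01}=0$), and after normalizing the coefficient of $x^3y$ to $1$ a further scaling rescales $a_{20}$ to $0$ or $1$, giving \eqref{V_5}. The main obstacle is the careful bookkeeping of how each coefficient transforms under the composed maps, in particular verifying that the sign freedom in $(\alpha,\delta,M)$ is enough to pin down the exact values $a_{01}=2$, $a_{11}=-3$ in the double-root case, and that the discriminant condition is matched precisely in \eqref{V_3}.
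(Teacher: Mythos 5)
Your proposal is correct and follows essentially the same route as the paper: translate to kill the $x^3$ and $x^2y$ terms, split on whether $(a_{02},a_{12},a_{03})$ vanishes, and in the linear-in-$y$ case classify by the discriminant (equivalently the root structure) of the depressed cubic multiplying $y$, with the same scalings normalizing the double-root case to $x^3-3x+2$. Two harmless imprecisions worth tightening: a general affine change \emph{can} create $y^4$ and the like from $x^3y$ (your structural claim holds only for the translations and diagonal scalings you actually use), and the $y^2$-coefficient is not translation-invariant (it picks up $a_{12}s+3a_{03}t$; only $a_{12}$ and $a_{03}$ are preserved), although the vanishing of the whole triple is indeed preserved, so your case split is well defined.
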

\begin{proof}
By multiplying the polynomial in case (VIII) by $1/4$, and after composing it with the change of  coordinates $T(x,y)=\big(x+a_{21}/3,y+a_{30}\big)$, we obtain it is equivalent to (keeping the notations of the coefficients)
$$
p=a_{10}x+a_{01}y+a_{20}x^2+a_{11}xy+a_{02}y^2+a_{12}xy^2+a_{03}y^3+x^3y.
$$
If $a_{03}^2+a_{12}^2+a_{02}^2>0$, we obtain case \eqref{V_1}, whereas if these coefficients are zero, we could have $a_{01}^2+a_{11}^2$ zero or not.
In the first case we get \eqref{V_5} (if $a_{20}\neq0$, simply divide $p$ by $a_{20}$ and compose it with the change of coordinates $(x,y)\mapsto \big(x,y/a_{20}\big)$) and in the second case, if $4a_{11}^3+27a_{01}^2\neq 0$, we get case \eqref{V_3}. 
On the other hand, if $4a_{11}^3+27a_{01}^2= 0$, then $a_{01}\neq 0$ and the change  $T(x,y)=\big(\sqrt[3]{2/a_{01}}x,a_{01}y/2\big)$ gives case \eqref{V_4}.   
\end{proof}

\begin{proof}[Proof of Proposition \ref{Pro_case_V}]
For case \eqref{V_1} and \eqref{V_3} of Lemma \ref{V}, $\frac{\partial p}{\partial x}$ and $\frac{\partial p}{\partial y}$ have the forms $M(y)x^2+a(y)x+b(y)$ and $N(y)x^3+c(y)x^2+d(y)x+e(y)$, respectively, with $M(y)=3y$ and $N(y)= 1$. Thus we are  under the hypotheses of Theorem \ref{raiz_23}. Calculating the resultant we have 
$$
R_0\left(\frac{\partial p}{\partial x},\frac{\partial p}{\partial y},x\right)=b_0+b_1y+\cdots+b_7y^7.
$$
In case \eqref{V_1}, $b_7=-243a_{03}^2$. Thus if $a_{03}\neq0$, $p$ is not a submersion. On the other hand, if $a_{03}=0$, we have ($b_7=0$) and $b_6=-25a_{12}^3$. Then observe that
$$
R_1\left(\frac{\partial p}{\partial x},\frac{\partial p}{\partial y},x\right)=4a_{20}^2-3a_{10}y+6a_{11}y^2+15a_{12}y^3.
$$
Therefore if $a_{12}\neq 0$ and $a_{20}\neq0$, we have a zero $z$ of $R_1(y)$ with opposite signal with $a_{12}$. 
Then $b_6M(z)=-75a_{12}^3z>0$, and Corollary \ref{raiz_23_Cor} guarantees that $p$ is not a submersion. 

On the other hand, if $a_{12}\neq0$ and $a_{20}=0$, we have that $b_0=-a_{10}^3$. 
Suppose first $a_{10}\neq 0$. If $a_{10}$ and $a_{12}$ have opposite signals, then $b_0$ and $b_6$ have opposite signals and Theorem \ref{raiz_23} gives that $p$ is not a submersion. 
Now if $a_{10}$ and $a_{12}$ have the same signal, we observe $R_1(y)$ have a negative and a positive zero, which gives $b_6M(z)>0=-75a_{12}^3z>0$, for one of these zeros. 
This proves that $p$ is not a submersion by Corollary \ref{raiz_23_Cor}. 
Now if $a_{10}=0$, we observe that $\frac{\partial p}{\partial x}(x,0)\equiv 0$, and $\frac{\partial p}{\partial y}(x,0)$ is a polynomial of degree $3$. Thus, again, $p$ is not a submersion.

If now $a_{12}=0$, then $a_{02}\neq0$ and hence $b_5=-108a_{02}\neq0$. Then Theorem \ref{raiz_23} gives that $p$ is not a submersion.

In the case \eqref{V_3}, $b_7=b_6=b_5=b_4=0$ and $b_3=-\big(4a_{11}^3+27a_{01}^2\big)\neq0$, which by Theorem \ref{raiz_23} proves that $p$ is not a submersion. 

Now for the case \eqref{V_4}, it is simple to observe that $\frac{\partial p}{\partial y}(-2,y)\equiv 0$, and $\frac{\partial p}{\partial x}(-2,y)$ is a polynomial of degree $1$, hence $p$ is not a submersion.

Finally, in the case \eqref{V_5}, it is clear that if $a_{10}=0$ then $p$ is not a submersion, whereas if $a_{10}\neq 0$, we multiply $p$ by $1/a_{10}^2$ and apply the change $(x,y)\mapsto \big(a_{10}y,x/a_{10}\big)$ to get the  polynomial map of the proposition. It is clear this is a submersion. 
\end{proof}

\subsection{Case (IX)}

\begin{proposition}\label{Pro_case_VII}
If the polynomial of the case (IX) is a submersion and has at least one disconnected  level set,  then it is equivalent to 
$$
p=y+xy^2+y^4.
$$
\end{proposition}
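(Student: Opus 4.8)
The plan is to mimic the scheme of Proposition \ref{Pro_case_V}, splitting the work into a normalization lemma followed by a subresultant analysis, and closing with the connectivity criteria of Section \ref{section_level_sets}. Since the equivalence of Definition \ref{def:equi} allows multiplication by $-1$, I would first reduce to $\alpha=1$, so that $p=p_3+x^4$. The affine changes that preserve the leading form $x^4$ up to a nonzero scalar are exactly those of the type $T(x,y)=(ax+s,\,cx+dy+t)$ with $ad\neq 0$; combined with the scaling $M$ and the shift $N$ they form the symmetry group available here. A lemma analogous to Lemma \ref{V} would then reduce $p$ to a short explicit list of canonical forms: the translation $x\mapsto x-a_{30}/4$ kills the coefficient $a_{30}$, and normalizing the $x^4$ coefficient to $1$ while rescaling $y$ lets us put the coefficient $a_{21}$ of $x^2y$ in $\{0,1\}$. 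This produces the two branches $a_{21}=1$ and $a_{21}=0$.

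In the branch $a_{21}=1$, note that $\frac{\partial p}{\partial y}=a_{21}x^2+(a_{11}+2a_{12}y)x+(a_{01}+2a_{02}y+3a_{03}y^2)$ is quadratic in $x$ with constant leading coefficient $a_{21}=1$, while $\frac{\partial p}{\partial x}=4x^3+3a_{30}x^2+(2a_{20}+2a_{21}y)x+(a_{10}+a_{11}y+a_{12}y^2)$ is cubic in $x$ with leading coefficient $N\equiv 4$. Thus the pair fits Theorem \ref{raiz_23} verbatim, with $x$ as the resultant variable and hypothesis (1) automatic since $N\equiv 4$ has no real zero. I would compute $R_0(y)=R_0\!\left(\frac{\partial p}{\partial y},\frac{\partial p}{\partial x},x\right)=b_0+\cdots+b_ly^l$ with Maple and descend through its coefficients top-down exactly as in the proof of Proposition \ref{Pro_case_V}: whenever the current leading coefficient is nonzero and the sign hypothesis \ref{raiz_23_2} holds (or can be forced through a zero $z$ of $R_1$ with the right sign of $b_lM(z)$, via Corollary \ref{raiz_23_Cor}, item \ref{iiiii}), the two partials share a zero and $p$ fails to be a submersion. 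Submersivity therefore makes these coefficients vanish one after another, pinning down the $a_{ij}$ until $p$ lands in a short list of submersion normal forms; the degenerate branch $a_{21}=0$, where $\frac{\partial p}{\partial y}$ is at most linear in $x$, is treated by eliminating $x$ directly.

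The final step is connectivity, and here case (IX) genuinely differs from case (VIII): it admits submersions all of whose level sets are connected (for instance $y+x^4$ or $y+x^2y+x^4$, which are graphs over the $x$-axis and hence trivially connected), so these cannot be discarded as non-submersions. For each surviving submersion normal form I would decide connectivity either by a direct graph argument (when $p$ is affine in one variable) or through Propositions \ref{Level_Sets_2ttt}, \ref{Level_Sets_22tt} and \ref{Level_Sets_3}, after writing $p$ as a quadratic or monic cubic in the appropriate variable. The outcome should be that every such form has all its level sets connected except $x+x^2y+x^4$, whose zero set $\{x=0\}\cup\{1+xy+x^3=0\}$ is disconnected while $\nabla(x+x^2y+x^4)=(1+2xy+4x^3,\,x^2)$ never vanishes. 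Swapping $x$ and $y$ (an affine change of determinant $-1$) turns this into $y+xy^2+y^4$, giving the claimed form; under the disconnectivity hypothesis of the proposition, $p$ must be equivalent to it.

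The main obstacle is the iterated subresultant descent: each newly imposed vanishing of a leading coefficient of $R_0$ must be substituted back before the next coefficient is meaningful, and in the borderline cases the plain sign condition \ref{raiz_23_2} is opaque, so one is forced to exhibit a suitable zero of $R_1$ and invoke Corollary \ref{raiz_23_Cor} instead. The second, more conceptual difficulty—absent in case (VIII)—is that submersivity does \emph{not} by itself conclude the argument: one genuinely needs the connectivity machinery of Section \ref{section_level_sets} to separate the unique disconnected representative $x+x^2y+x^4$ from the many connected submersions, such as $y+x^4$, that also survive the subresultant test.
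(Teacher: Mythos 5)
Your overall strategy is the paper's: a normalization lemma in the spirit of Lemma \ref{V}, a descent through the coefficients of $R_0$ via Theorem \ref{raiz_23} and Corollary \ref{raiz_23_Cor}, and the connectivity criteria of Section \ref{section_level_sets} to separate the unique disconnected representative $x+x^2y+x^4\sim y+xy^2+y^4$ from the connected submersions such as $y+x^4$. You also correctly identify the conceptual difference from case (VIII), namely that submersions genuinely survive here and connectivity must be decided. The one substantive divergence is the branching order, and it matters. The paper's Lemma \ref{VII} splits first on the coefficient $a_{03}$ of $y^3$: when $a_{03}\neq 0$ the whole branch is dispatched in one stroke by writing $p-c$ as a monic cubic in $y$ and applying Proposition \ref{Level_Sets_3} (the discriminant has degree $8$ with leading coefficient $1/4$), with no subresultants at all; only when $a_{03}=0$ does the subresultant descent begin, and there the successive leading coefficients of $R_0$ conveniently sit in odd degree, so hypothesis \ref{raiz_23_2} is automatic. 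Your branching on $a_{21}$ keeps the $y^3$ term alive inside the subresultant branch, and there the inference ``submersivity makes these coefficients vanish one after another'' fails: for a genuine submersion such as $y+y^3+x^2y+x^4$, the polynomial $R_0(y)$ is nonzero and sign-semidefinite, so no coefficient is forced to vanish and the descent stalls on a positive-dimensional family rather than a short list of normal forms. Your fallback (apply Proposition \ref{Level_Sets_3} to whatever survives) does rescue this branch, but only if you apply it to the entire multi-parameter family with $a_{03}\neq 0$ up front --- at which point you have reconstructed the paper's order and the subresultant work in that branch was wasted. I would reorganize accordingly; otherwise the plan, including the treatment of \eqref{VII_5} and the final swap of variables, matches the paper's proof.
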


\begin{lemma}\label{VII}
The polynomials in case (IX) are equivalent to one of the following.
\begin{align}
&a_{10}x+a_{01}y+a_{20}x^2\hspace{-.1cm}+a_{11}xy+a_{02}y^2\hspace{-.1cm}+a_{30}x^3\hspace{-.1cm}+a_{21}x^2y+
a_{12}xy^2\hspace{-.1cm}+y^3\hspace{-.1cm}+x^4,\label{VII_1}\\
&a_{10}x+a_{01}y+a_{20}x^2\hspace{-.1cm}+a_{11}xy+a_{02}y^2\hspace{-.1cm}+a_{30}x^3\hspace{-.1cm}
+a_{21}x^2y+a_{12}xy^2\hspace{-.1cm}+    x^4,\label{VII_2}\\
& a_{12}\neq 0,\ \textnormal{or}\ a_{12}=a_{21}=0, \notag \\
&a_{10}x+        a_{20}x^2+         a_{02}y^2+a_{30}x^3+      x^2y+               x^4,\ \ a_{02}\neq 0,\label{VII_3}\\
&a_{10}x+a_{01}y+a_{20}x^2+  a_{30}x^3+  x^2y+  x^4,\ \ a_{01}\neq 0,\label{VII_4}\\
&      a_{10}x+                                                     x^2y+               x^4,\label{VII_5}
\end{align}
\end{lemma}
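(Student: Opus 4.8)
The plan is to start from the normal form of case (IX), namely $p=p_3+\alpha x^4$ with $\alpha=\pm1$, and to bring it into one of the five listed shapes using only a convenient subgroup of the equivalences of Definition \ref{def:equi}. First, multiplying by $M=-1$ when necessary (which replaces $\alpha x^4$ by $x^4$ and merely reverses the signs of the free coefficients of $p_3$), I would assume $\alpha=1$, so that the quartic part of $p$ is exactly $x^4$. The moves I will use are precisely those preserving this quartic part: composition with a lower-triangular linear map $x\mapsto ax$, $y\mapsto cx+dy$ together with the compensating rescaling $M=a^4$; composition with translations $x\mapsto x+s$ and $y\mapsto y+k$; and absorption of the resulting constant into the term $N$. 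A direct check shows that none of these produces any of the monomials $x^3y,\,x^2y^2,\,xy^3,\,y^4$, so $p$ stays of the form (degree $\le 3$)$\,+\,x^4$ throughout, which is what allows me to normalize one coefficient at a time.

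Next I would branch on $a_{03}$. If $a_{03}\neq0$, the rescaling $y\mapsto a_{03}^{-1/3}y$ (a real cube root always exists) fixes $x^4$ and turns the $y^3$ coefficient into $1$, giving \eqref{VII_1}. If $a_{03}=0$, I branch on $a_{12}$ and $a_{21}$: when $a_{12}\neq0$, or when $a_{12}=a_{21}=0$, no further work is needed and $p$ already has the shape \eqref{VII_2}. The only remaining possibility is $a_{03}=0$, $a_{12}=0$ and $a_{21}\neq0$, which is where the real work lies and which must land in one of \eqref{VII_3}, \eqref{VII_4}, \eqref{VII_5}.

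In that last case I would first normalize $a_{21}=1$ by the rescaling $y\mapsto a_{21}^{-1}y$, reaching $p=a_{10}x+a_{01}y+a_{20}x^2+a_{11}xy+a_{02}y^2+a_{30}x^3+x^2y+x^4$. The idea is to use the pivot monomial $x^2y$, now with coefficient $1$, to clear the lower terms, splitting according to whether $a_{02}$ vanishes. If $a_{02}\neq0$, the shear $y\mapsto y-\frac{a_{11}}{2a_{02}}x$ removes the $xy$ term and the translation $y\mapsto y-\frac{a_{01}}{2a_{02}}$ removes the $y$ term (both acting through the $y^2$ monomial, and both leaving $a_{02}\neq0$), yielding \eqref{VII_3}. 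If $a_{02}=0$, the $y^2$ monomial is unavailable, so instead the translation $x\mapsto x-\frac{a_{11}}{2}$ removes the $xy$ term through the pivot $x^2y$; after renaming coefficients this is $a_{10}x+a_{01}y+a_{20}x^2+a_{30}x^3+x^2y+x^4$, which is \eqref{VII_4} when the new $a_{01}\neq0$. If this $a_{01}$ vanishes too, the translation $y\mapsto y-a_{20}$ clears $x^2$ and then the shear $y\mapsto y-a_{30}x$ clears $x^3$, both through $x^2y$, leaving $a_{10}x+x^2y+x^4$, which is \eqref{VII_5}.

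The main obstacle I anticipate is bookkeeping rather than depth: at each reduction I must verify that the chosen shear or translation does not resurrect a monomial already eliminated, and in particular that the split on $a_{02}$ is genuinely forced — when $a_{02}=0$ the $y^2$ monomial can no longer absorb the $xy$ term, which is why the argument switches to an $x$-translation acting through $x^2y$. Checking that the four combinatorial branches exhaust $a_{03}=0,\ a_{12}=0,\ a_{21}\neq0$ and reproduce exactly the stated side conditions ($a_{02}\neq0$ in \eqref{VII_3} and $a_{01}\neq0$ in \eqref{VII_4}) is then a routine, if slightly tedious, computation carried out by tracking how each monomial's coefficient transforms under the three elementary moves.
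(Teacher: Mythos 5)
Your proposal is correct and follows essentially the same route as the paper: reduce to $p_3+x^4$, branch on $a_{03}$, then on $a_{12}$ and $a_{21}$, and in the remaining case kill $a_{11}$ and branch on $a_{02}$ and $a_{01}$ to reach \eqref{VII_3}--\eqref{VII_5}. The only (cosmetic) difference is that the paper eliminates $a_{11}$ uniformly with the single $x$-translation $x\mapsto x+a_{11}/(2a_{21})$ acting through the $x^2y$ pivot, whereas you split on $a_{02}$ and use a $y$-shear through $y^2$ when $a_{02}\neq 0$; both work and yield the same normal forms.
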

\begin{proof}
First we divide the polynomial of case (IX) by $\alpha$. Then we get the case \eqref{VII_1} above if $a_{03}\neq0$ by composing $p$ with $T(x,y)=\big(x,\sqrt[3]{a_{03}}y\big)$. 
If $a_{03}=0$ and $a_{12}\neq0$, we get the first part of case \eqref{VII_2} above, whereas if $a_{03}=a_{12}=a_{21}=0$, we obtain the second one. 
Now if $a_{03}=a_{12}=0$ and $a_{21}\neq0$, we take the transformation $T(x,y)=\big(x+a_{11}/(2a_{21}),a_{21}y\big)$ to obtain
$$
p=a_{10}x+a_{01}y+a_{20}x^2+a_{02}y^2+a_{30}x^3+x^2y+x^4.
$$
Then, if $a_{02}\neq0$, we apply  $T(x,y)=\big(x,y+a_{01}/(2a_{02})\big)$ to obtain case \eqref{VII_3}. 
On the other hand, if $a_{02}=0$ we obtain case \eqref{VII_4} if $a_{01}\neq 0$ and, if $a_{01}=0$, the transformation $T(x,y)=\big(x, y+a_{30}x+a_{20}\big)$ gives case \eqref{VII_5}. 
\end{proof}

\begin{proof}[Proof of Proposition \ref{Pro_case_VII}]
The equation $p(x,y)-c=0$, where $p$ is the polynomial of case \eqref{VII_1} of Lemma \ref{VII}, has the form 
$$
y^3+A(x)y^2+B(x)y+C(x)=0,
$$
where
$A(x)=a_{02}+a_{12}x$, $B(x)=a_{01}+a_{11}x+a_{21}x^2$ and $C(x)=a_{10}x+a_{20}x^2+a_{30}x^3+x^4-c$. 
Calculating the discriminant of the equation as in Proposition \ref{Level_Sets_3}, we obtain 
$$
D(x)=\sum_{i=0}^kb_ix^i,\ \ \ \ \ \  k=8,\ \ \ \ \ \ b_8=1/4.
$$ 
Thus by this proposition, we conclude that if $p$ is a submersion, it has all its level sets connected.

Now we observe that in case \eqref{VII_2} of Lemma \ref{VII},  
\begin{equation}\label{special_form}
\frac{\partial p}{\partial y}=M(y)x^2+a(y)x+b(y),\ \ \ \ \frac{\partial p}{\partial x}=N(y)x^3+c(y)x^2+d(y)x+e(y),
\end{equation}
with $N(y)=4$, and $M(y)=a_{21}$ (which \emph{can} be zero, for the firs part of this case  and \emph{is} zero for the second part). 
Then we calculate $R_0\big(\frac{\partial p}{\partial y}, \frac{\partial p}{\partial x},x\big)=b_0+b_1y+\cdots+b_5y^5$, and observe that $b_5\neq0$ if $a_{12}\neq 0$, which by Theorem \ref{raiz_23} gives
\footnote{Observe when $M\equiv0$, we can yet apply this theorem.} 
that $p$ is not a submersion. Now if $a_{12}=a_{21}=0$, we have that $b_5=b_4=0$ and $b_3=-128 a_{02}^3$.
Therefore, if $a_{02}\neq 0$, the same theorem guarantees $p$ is not a submersion. 
Finally, if $a_{02}=0$, we have $b_{2}=0$ and $b_1=4a_{11}^4$. Then if $a_{11}\neq 0$, we also have $p$ is not a submersion. 
If $a_{11}=0$ and $a_{01}\neq 0$, it is simple to see the level sets of $p$ are all connected, while if $a_{11}=a_{01}=0$, it follows that  $\frac{\partial p}{\partial y}(x,y)\equiv 0$, and $\frac{\partial p}{\partial x}(x,y)$ annihilates for some $x$, since it has degree $3$ in $x$. 

Consider now case \eqref{VII_3} of Lemma \ref{VII} and observe that $\frac{\partial p}{\partial y}$ and $\frac{\partial p}{\partial x}$ have again the form of \eqref{special_form}. 
We have $R_0\big(\frac{\partial p}{\partial y}, \frac{\partial p}{\partial x},x\big)=b_0+b_1y+b_2y^2+b_3y^3$, with $b_3=-8a_{02}(4a_{02}-1)^2$. If this is not zero, we obtain by Theorem \ref{raiz_23} that $p$ is not a submersion. 
If this is zero, i.e. $a_{02}=1/4$ (since $a_{02}\neq 0$), we obtain 
\begin{equation}\label{r0VII_3}
R_0\left(\frac{\partial p}{\partial y},\frac{\partial p}{\partial x},x\right)=-\frac{9}{4}a_{30}^2y^2 +(3a_{30}a_{10}-2a_{20}^2)y-a_{10}^2. 
\end{equation}
Moreover, equation $p(x,y)-c=0$ has the form of Proposition  \ref{Level_Sets_2ttt}, with discriminant 
\begin{equation}\label{deltaVII_3}
\Delta(x)=-a_{30}x^3-a_{20}x^2-a_{10}x+c.
\end{equation}
Thus if $a_{30}\neq0$, Proposition \ref{Level_Sets_2ttt} gives us that if $p$ is a submersion, then all its level sets are connected. 
On the other hand, supposing $a_{30}=0$, if $a_{20}\neq 0$, \eqref{r0VII_3} and Theorem \ref{raiz_23}  give us $p$ is not a submersion. 
If $a_{20}=0$ and $a_{10}\neq 0$, \eqref{deltaVII_3} and Proposition \ref{Level_Sets_2ttt} give us again that if $p$ is a submersion, all its level sets are connected. 
Finally, if $a_{20}=a_{10}=0$, it is clear that $\nabla p(0,0)=0$.

Now we consider case \eqref{VII_4}. We have $\frac{\partial p}{\partial x}=a_{10}+2a_{20}x+3a{30}x^2+4x^3+2xy$ and  $\frac{\partial p}{\partial y}=a_{01}+x^2$.
If $a_{01}<0$, it is clear that $p$ is not a submersion. 
On the other hand, if $a_{01}>0$, we have that  $p$ is a submersion, but in this case $p(x,y)=c$ gives us $y=\big(c-a_{10}x
-a_{20}x^2-a_{30}x^3-x^4\big)/\left(a_{01}+x^2\right)$, i.e. all the level sets of $p$ are connected.

Finally, case \eqref{VII_5} with $a_{10}=0$ is not a submersion, whereas if $a_{10}\neq0$, we divide $p$ by $\sqrt[3]{a_{10}^4}$ and then the change  $(x,y)\mapsto \big(y/\sqrt[3]{a_{10}^2},x/\sqrt[3]{a_{10}} \big)$, gives the polynomial of the proposition. 
\end{proof}

\subsection{Case (VII)}

\begin{proposition}\label{Pro_Case_IV}
If the polynomial of case (VII) is a submersion and has no all its level sets connected, it is equivalent to one of the following.
\begin{align*}
p=& y+a_{02}y^2+y^3+x^2y^2,\ \ \ \ a_{02}^2-3<0,\\
p=& y+x^2y^2.
\end{align*} 
\end{proposition}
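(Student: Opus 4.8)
The plan is to follow the same pattern as the proofs of Propositions \ref{I_III_VIII_IX}, \ref{Pro_case_V} and \ref{Pro_case_VII}: first reduce the case-(VII) polynomial to a short list of normal forms by a preliminary lemma, and then treat each form with the common-zero criteria of Section \ref{section_common_zeros} and the connectedness criteria of Section \ref{section_level_sets}. Since case (VII) is $p_3+6\alpha x^2y^2$ with $\alpha=\pm1$, I would first divide by $\alpha$ so that the quartic part becomes $6x^2y^2$. The linear maps fixing the pencil $\{xy=0\}$, namely the diagonal scalings $(x,y)\mapsto(\lambda x,\mu y)$ and the swap $(x,y)\mapsto(y,x)$, together with arbitrary translations (which perturb only $p_3$, since translating $6x^2y^2$ produces terms of degree $\le 3$) and with multiplication by and addition of constants, are exactly the moves keeping the quartic of the shape $c\,x^2y^2$. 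The cheap simplification is that $y\mapsto y+t$ changes the coefficient of $x^2y$ by $12t$ and $x\mapsto x+s$ changes the coefficient of $xy^2$ by $12s$, and these two translations do not interfere; so the mixed cubic monomials $x^2y$ and $xy^2$ can be killed, leaving only the pure cubic terms $a_{30}x^3$ and $a_{03}y^3$. A scaling normalises $6x^2y^2$ to $x^2y^2$, and the swap lets me fix a normalisation of the pair $\{a_{30},a_{03}\}$. I expect this reduction lemma to output a handful of normal forms, split according to whether the pure cubic coefficients vanish.

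For each normal form I would write $\partial p/\partial x$ and $\partial p/\partial y$ as polynomials in a convenient variable and compute their subresultants $R_0$ and $R_1$ via Lemma \ref{Calculo_res} (in practice with Maple). When $a_{03}\ne0$ both partials are quadratic in $y$, so Theorem \ref{raiz_22} and Corollary \ref{raiz_22_Cor} apply directly; by the swap, the case $a_{30}\ne0$ is symmetric; and when $a_{30}=a_{03}=0$ one of the partials drops to degree $1$ and the common zeros can be located by hand. Reading off the top coefficient $b_l$ of $R_0$, whenever $b_lR_0(z)<0$ for some $z$, or the $R_1$-hypothesis of Corollary \ref{raiz_22_Cor} is met, the partials acquire a common real zero and $p$ is not a submersion. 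This is the mechanism, exactly as the coefficients $b_7,b_6,\dots$ were peeled off in Propositions \ref{I_III_VIII_IX} and \ref{Pro_case_VII}, that eliminates the generic forms.

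When the subresultant test is inconclusive, i.e.\ $b_lR_0$ is sign-semidefinite, I would turn to the level sets by writing $p(x,y)-c=0$. If $a_{30}\ne0$ this is a cubic in $x$ and Proposition \ref{Level_Sets_3} should give connectedness once one checks that its discriminant has even degree with positive leading coefficient; if $a_{30}=0$ it is the quadratic $A(y)x^2+B(y)x+C(y)$ with $A(y)=a_{20}+6\alpha y^2$, where I would invoke Proposition \ref{Level_Sets_2ttt} when $A$ has no real zero and Proposition \ref{Level_Sets_22tt} when $A$ vanishes at a single point. The decisive observation is that the two forms in the statement are precisely those with $A(y)=y^2$, vanishing doubly at $y=0$, while the entire line $y=0$ lies in the level set $p=0$ (because then $p(x,0)\equiv0$); this is exactly the failure of hypothesis \ref{segundo} in Proposition \ref{Level_Sets_22tt}, so connectedness cannot be concluded and a disconnected level set survives. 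For $p=y+a_{02}y^2+y^3+x^2y^2$ I would then check the submersion condition directly: $\partial p/\partial x=2xy^2$ forces $x=0$ or $y=0$ at a critical point, and since $\partial p/\partial y(x,0)=1\ne0$ while $\partial p/\partial y(0,y)=1+2a_{02}y+3y^2$, the map is a submersion exactly when $(2a_{02})^2-12<0$, i.e.\ $a_{02}^2-3<0$, which is the stated constraint.

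The step I expect to be the main obstacle is the regime $a_{30}=0$ with $A(y)=a_{20}+6\alpha y^2$ possessing two real zeros (when $a_{20}$ and $\alpha$ have opposite signs): there neither Proposition \ref{Level_Sets_2ttt} nor Proposition \ref{Level_Sets_22tt} applies, so one must combine the subresultant criteria, to show that the partials do acquire a common zero and $p$ fails to be a submersion, with careful bookkeeping of the surviving coefficients. It is in this regime that isolating the precise boundary $a_{02}^2-3<0$ between submersion and non-submersion, and confirming that no further disconnected submersions appear beyond the two claimed forms, will require the most care.
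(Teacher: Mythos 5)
Your proposal follows essentially the same route as the paper: the same reduction lemma (translations killing $x^2y$ and $xy^2$, then scalings and the swap normalising the pure cubic coefficients), the same use of Theorem \ref{raiz_22} and Corollary \ref{raiz_22_Cor} on the partials viewed as quadratics in $y$, the same level-set propositions for the inconclusive subcases, and the same direct computation on the surviving forms yielding the constraint $a_{02}^2<3$. The regime you flag as the main obstacle ($A(y)=a_{20}+y^2$ with two real zeros, i.e.\ $a_{20}<0$) is indeed resolved in the paper by the subresultant criteria plus, in the one residual subcase $a_{10}=a_{11}=0$, the explicit observation that $\partial p/\partial x=2x\left(a_{20}+y^2\right)$ vanishes on the whole line $y=-\sqrt{-a_{20}}$, along which $\partial p/\partial y=2yx^2+\left(3y^2+2a_{02}y+a_{01}\right)$ must vanish for some $x$.
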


\begin{lemma}\label{IV}
The polynomials of case (VII) are equivalent to one of the following.
\begin{align}
p=& a_{10}x+a_{01}y+a_{20}x^2+a_{11}xy+a_{02}y^2+    x^3+    y^3+x^2y^2,\label{IV_1}\\
p=& a_{10}x+a_{01}y+a_{20}x^2+a_{11}xy+a_{02}y^2+            y^3+x^2y^2,\label{IV_2}\\
p=& a_{10}x+a_{01}y+a_{20}x^2+a_{11}xy+a_{02}y^2+                x^2y^2,\ \ \ 
|a_{02}|=1,\label{IV_3}\\
p=& a_{10}x+a_{01}y+           a_{11}xy+                          x^2y^2.\label{IV_4}
\end{align} 
\end{lemma}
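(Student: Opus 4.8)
The plan is to exhibit explicit equivalences in the sense of Definition \ref{def:equi}. The governing idea is that the quartic part of case (VII), namely $6\alpha x^2y^2=6\alpha(xy)^2$, is invariant up to a nonzero scalar exactly under the diagonal scalings $(x,y)\mapsto(\lambda x,\mu y)$, the swap $(x,y)\mapsto(y,x)$, and the sign flips $x\mapsto -x$, $y\mapsto -y$; these are the transformations I may use freely without disturbing the leading form. First I would multiply by $M=1/(6\alpha)$ to normalize the quartic coefficient to $1$, reducing $p$ to $p_3+x^2y^2$ with $p_3$ a general polynomial of degree at most $3$.

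Next I would apply a translation $(x,y)\mapsto(x+s,y+t)$ to remove the mixed cubic monomials. The key computation is that translating $x^2y^2$ produces the cubic terms $2t\,x^2y+2s\,xy^2$ but never $x^3$ or $y^3$; so choosing $s=-a_{12}/2$ and $t=-a_{21}/2$ kills the coefficients of $x^2y$ and $xy^2$ while leaving the coefficients $a_{30}$ of $x^3$ and $a_{03}$ of $y^3$ unchanged and preserving the quartic (the constant it generates is absorbed by the additive constant $N$ of Definition \ref{def:equi}, and the new linear/quadratic terms are absorbed into the generic lower-order coefficients). This brings $p$ to the shape $a_{30}x^3+a_{03}y^3+x^2y^2$ plus terms of degree at most two.

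The classification then splits on $(a_{30},a_{03})$. If both are nonzero I would solve $M\lambda^3a_{30}=M\mu^3a_{03}=M\lambda^2\mu^2=1$ for $(\lambda,\mu,M)$; eliminating $M=1/(\lambda^2\mu^2)$ gives $\mu^3=a_{03}a_{30}^2$ and $\lambda=\mu^2/a_{30}$, which are real, yielding form \eqref{IV_1}. If exactly one is nonzero, the swap lets me assume $a_{03}\neq 0$, $a_{30}=0$; the sign flip $y\mapsto -y$ makes $a_{03}>0$, and normalizing the $y^3$ and $x^2y^2$ coefficients to $1$ (e.g.\ $\mu=1$, $M=1/a_{03}$, $\lambda=\sqrt{a_{03}}$) gives \eqref{IV_2}. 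If $a_{30}=a_{03}=0$, no cubic terms survive and $p$ equals $x^2y^2$ plus a quadratic; inspecting the diagonal quadratic coefficients, if $a_{20}$ or $a_{02}$ is nonzero the swap places a nonzero coefficient in the $y^2$ slot and the scalings normalize it to $\pm1$ and the quartic to $1$, giving \eqref{IV_3} with $|a_{02}|=1$, whereas if $a_{20}=a_{02}=0$ only $a_{11}xy$ remains among the quadratics and we land on \eqref{IV_4}.

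The main obstacle is the bookkeeping of the three degrees of freedom $(\lambda,\mu,M)$: at each branch I must check that the prescribed normalizations are simultaneously achievable with real, nonzero parameters while the quartic coefficient stays equal to $+1$. Since $\lambda^2\mu^2>0$ always, the sign of the quartic coefficient equals the sign of $M$, which is at my disposal, so $+1$ is always attainable. The only point needing genuine care is the two-nonzero-cubics branch, where one verifies the cube-root solution is real and consistent with $M>0$ (indeed $\lambda^3a_{30}=\mu^6/a_{30}^2>0$ automatically), and the sign ambiguities for the $y^3$ and $y^2$ coefficients, which are absorbed by the sign-flip symmetries that leave $x^2y^2$ invariant.
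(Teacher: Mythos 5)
Your proposal is correct and follows essentially the same route as the paper: normalize the quartic by dividing by $6\alpha$, translate to kill the $x^2y$ and $xy^2$ coefficients (which cannot create or alter $x^3$ and $y^3$), and then split on the vanishing of $a_{30}$, $a_{03}$ and of the diagonal quadratic coefficients, using diagonal scalings, the swap $x\leftrightarrow y$, and the scalar $M$ to reach the four normal forms. The explicit parameter checks you carry out (reality of the cube roots, positivity of $M\lambda^2\mu^2$) match the transformations the paper writes down.
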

\begin{proof}
We first divide $p$ by $6\alpha$. Then we compose it with the transformation $T(x,y)=\big(x+a_{12}/2,y+a_{21}/2\big)$ to get the following form
$$
p=a_{10}x+a_{01}y+a_{20}x^2+a_{11}xy+a_{02}y^2+a_{30}x^3+a_{03}y^3+x^2y^2.
$$
If $a_{03}a_{30}\neq 0$, we compose the polynomial  with the transformation  $T(x,y)= \big(x/\sqrt[3]{a_{30}a_{03}^2},y/\sqrt[3]{a_{30}^2a_{03}}\big)$ and multiply by $a_{30}^{-2}a_{03}^{-2}$ to get \eqref{IV_1}. 
If $a_{30}=0$ and $a_{03}\neq 0$, the transformation $T(x,y)=\big(x/\sqrt[3]{a_{03}},\sqrt[3]{a_{03}}y\big)$ takes $p$ to \eqref{IV_2}. 
If $a_{30}\neq 0$ and $a_{03}=0$, we change $x$ by $y$ to get the case just studied. 

If now $a_{30}=a_{03}=0$, and if $a_{02}\neq 0$, the change of coordinates $T(x,y)=\left(1/\sqrt{|a_{02}|}x,\sqrt{|a_{02}|}y\right)$ gives \eqref{IV_3}. If $a_{20}\neq 0$, change $x$ by $y$ to get the case just studied.

Thus we suppose $a_{30}=a_{03}=a_{20}=a_{02}=0$, to obtain case \eqref{IV_4}. 
\end{proof}

\begin{proof}[Proof of Proposition \ref{Pro_Case_IV}]
We will analyse each case of Lemma \ref{IV}. For case \eqref{IV_1}, we notice that $\frac{\partial p}{\partial x}$ and $\frac{\partial p}{\partial y}$ have the forms $M(x)y^2+a(x)y+b(x)$ and $N(x)y^2+c(x)y+d(x)$, respectively, with $M(x)=2x$ and $N(x)=3$. 
The subresultant is
$$
R_0\left(\frac{\partial p}{\partial x},\frac{\partial p}{\partial y},y\right)=b_0+\cdots+b_7x^7,
$$
with $b_7=-24$ and $b_0=-3a_{11}^2a_{01}+6a_{11}a_{10}a_{02}-9a_{10}^2$. Thus  by Theorem \ref{raiz_22}, $\nabla p(x,y)$ has a zero if $b_0\neq 0$.
Moreover, by the same theorem, if $b_0=0$ and $a_{11}=a(0)\neq 0$, we have again that $\nabla p(x,y)$ has a zero. 
Now if $b_0=0$ and $a_{11}=0$, we obtain $a_{10}=0$ and then  $\frac{\partial p}{\partial x}(x,y)=x\big(2a_{20}+3x+2y^2\big)$. 
Taking  $x=-2\big(y^2+a_{20}\big)/3$, we have $\frac{\partial p}{\partial x}=0$, and  $\frac{\partial p}{\partial y}$ is a polynomial of degree $5$ in $y$. Hence $p$ is not a submersion.

Now consider case \eqref{IV_2} and observe that the equation $p(x,y)-c=0$ is of the form $y^3+A(x)y^2+B(x)y+C(x)$, with the discriminant $\Delta(x)$ being  a polynomial of degree $8$ with leader coefficient $a_{20}/27$. 
If $a_{20}>0$, Proposition \ref{Level_Sets_3} gives us that if $p$ is a submersion, all its level sets are connected.

If $a_{20}\leq 0$, observe that  $\frac{\partial p}{\partial x}$ and $\frac{\partial p}{\partial y}$ have the forms $M(x)y^2+a(x)y+b(x)$ and $N(x)y^2+c(x)y+d(x)$, respectively, with $M(x)=2x$ and $N(x)=3$. Then we have
\begin{equation}\label{101920}
R_0\left(\frac{\partial p}{\partial x},\frac{\partial p}{\partial y},y\right)=b_0+\cdots+b_5x^5+b_6x^6,
\end{equation}
with $b_6=-16a_{20}$ and $b_0=-3a_{11}^2a_{01}+6a_{11}a_{10}a_{02}-9a_{10}^2$, and  
$$
R_1\left(\frac{\partial p}{\partial x},\frac{\partial p}{\partial y},y\right)=-3a_{11}+4a_{02}x+4x^3.
$$
We first suppose $a_{20}<0$. Then if $b_0\neq 0$ and $a_{11}\neq 0$, we have by Corollary \ref{raiz_22_Cor}, that $\nabla p(x,y)$ has a zero. 
If $b_0\neq 0$ and $a_{11}=0$, we get $b_0=-9a_{10}^2<0$ and $b_6>0$, which by Theorem \ref{raiz_22} gives a zero of $\nabla p$. 
Now if $b_0=0$ and $a_{11}=a(0)\neq 0$, we have by the same theorem, that $p$ is not a submersion, whereas if $a_{11}=0$, then $a_{10}=0$. 
Therefore, again by Theorem \ref{raiz_22}, if $c(0)^2-4N(0)d(0)=4\big(a_{02}^2-3a_{01}\big)\geq0$, we have a zero of $\nabla p$. 
Thus let us suppose $a_{02}^2-3a_{01}<0$. 
We observe that $y=-\sqrt{-a_{20}}$ annihilates $\frac{\partial p}{\partial x}(x,y)$, and there exists $x$ such that $\frac{\partial p}{\partial y}(x,y)=2yx^2+\big(3y^2+2a_{02}y+a_{01}\big)=0$, by our last hypothesis.

Now if $a_{20}=0$, then $b_6=0$ and $b_5=-8a_{10}$. Thus if $a_{10}\neq 0$ and $b_0\neq 0$, Theorem \ref{raiz_22} gives a zero of $\nabla p$. If yet $a_{10}\neq 0$ and $b_0=0$, the same theorem guarantees $p$ is not a submersion, since in this case $a(0)=a_{11}$ has to be non zero. 
Now if $a_{10}=0$, we see that $b_5=b_4=0$ and $b_3=-4a_{11}a_{01}$ and $b_0=-3a_{11}^2a_{01}$. 
Therefore, if $a_{01}a_{11}\neq 0$, Theorem \ref{raiz_22} guarantees that $p$ is not a submersion. 
If $a_{01}=0$, it is clear that $\nabla p(0,0)=0$, whereas if $a_{01}\neq0$ and $a_{11}=0$, we obtain that $p$ is a submersion if and only if $a_{02}^2-3a_{01}<0$. 
Then $a_{01}>0$ and multiplying $p$ by $1/\sqrt{a_{01}^3}$ and composing with the change $(x,y)\mapsto\big(x/\sqrt[4]{a_{01}},y/\sqrt{a_{01}} \big)$, we get the first case of this  proposition.

For case \eqref{IV_3}, $\frac{\partial p}{\partial y}=0$ gives  $y=-({a_{01}+a_{11}x})/\left({2(a_{02}+x^2)}\right)$, and it follows that 
$$
\frac{\partial p}{\partial x}(x,y)=\frac{1}{2(a_{02}+x^2)^2}q_5(x),
$$
where $q_5=b_0+\cdots +b_5x^5$, with $b_5=4a_{20}$. 

If $a_{20}\neq 0$ and $a_{02}=1$, we get a zero of $\nabla p$. If $a_{02}=-1$, we observe that $q_5(1)=(a_{01}+a_{11})^2$ and $q_5(-1)=-(a_{01}-a_{11})^2$. If $q_5(1)$ and $q_5(-1)$ are not zero, we have a zero of $q_5$ different from $\pm 1$, and thus a zero of $\nabla p$.
If $q_5(1)=0$, i.e. $a_{01}=-a_{11}$, we have 
\begin{equation}\label{8709}
\frac{\partial p}{\partial y}=(x-1)\left (a_{11}+2y+2xy\right).
\end{equation}
Then if $a_{11}=0$ it is simple to see that $\nabla p\left(-a_{10}/(2a_{20}),0\right)=(0,0)$. On the other hand, if $a_{11}\neq 0$,
\eqref{8709} gives that $x=\big(-(a_{11}+2y)/(2y)\big)$ annihilates $\frac{\partial p}{\partial y}$, and 
$$
\frac{\partial p}{\partial x}(x,y)=-\frac{y^3+(2a_{20}+a_{10})y+a_{20}a_{11}}{y},
$$
which clearly has a zero $y\neq 0$. Analogously, if $q_5(-1)=0$, we obtain a zero of $\nabla p$.

Thus we can suppose $a_{20}=0$. Solving $x$ in $\frac{\partial p}{\partial x}=0$, we get $x=-(a_{10}+a_{11}y)/\left(2y^2\right)$, which substituting in $\frac{\partial p}{\partial y}$ gives 
$$
\frac{\partial p}{\partial y}(x,y)=\frac{1}{2y^3}\left(a_{10}^2+a_{10}a_{11}y+2a_{01}y^3+4a_{02}y^4 \right).
$$
If $a_{10}\neq 0$ and $a_{02}=-1$, we have a zero of $\nabla p$. 
If $a_{10}\neq 0$ and $a_{02}=1$, we consider the discriminant of $p(x,y)-c=a_{10}x-c+(a_{01}+a_{11}x)y+(1+x^2)y^2$ as in Proposition  \ref{Level_Sets_2ttt}, and notice that it is a polynomial of degree $3$ in $x$. 
Therefore,  if $p$ is a submersion, all its level sets are connected. 

Now to finish this case, we suppose $a_{10}=0$ and observe that $\frac{\partial p}{\partial x}=(2xy+a_{11})y$. Then if $a_{11}\neq0$, we have that $\nabla p\left(-a_{01}/a_{11},0\right)=(0,0)$. On the other hand if $a_{11}=0$, we see that $\nabla p\left(0,-a_{01}/(2a_{02})\right)=(0,0)$ as well.

Finally, for case \eqref{IV_4}, we first observe that if $p$ is a submersion, then changing $x$ to $y$ if necessary, we can suppose $a_{01}\neq 0$.  
Then $\frac{\partial p}{\partial x}=0$ gives that $x=-\big(a_{10}+a_{11}y\big)/\big(2y^2\big)$ and hence $\frac{\partial p}{\partial y}=\big({2a_{01}y^3+a_{11}a_{10}y+
a_{10}+a_{10}^2}\big)/({2y^3})$.
Thus if $a_{10}\neq 0$, we get that $\nabla p$ has a zero. On the other hand,  if $a_{10}=0$, we get that $y=0$ annihilates $\frac{\partial p}{\partial x}$ and if $a_{11}\neq 0$, we have a zero of $\nabla p(x,0)$. 
Finally, if $a_{11}=0$, the change of variables $(x,y)\mapsto \big(x/\sqrt{|a_{01}|},a_{01}y \big)$ gives the second case of this proposition. 
\end{proof}

\subsection{Cases (IV) and (V)}\label{ultima}

\begin{proposition}\label{Pro_case_II_VI}
If the polynomials of cases (IV) and (V) are submersions, then all its level sets are connected.	
\end{proposition}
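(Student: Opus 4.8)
The plan is to treat both families by regarding $p$ as a polynomial in $x$ and, in each sub-case, applying either a connectivity criterion from Section~\ref{section_level_sets} (when $p$ is a submersion) or a common-zero criterion from Section~\ref{section_common_zeros} (to exhibit a zero of $\nabla p$, so that $p$ fails to be a submersion and the statement holds vacuously). First I would record a normalizing lemma: by Definition~\ref{def:equi} the translations $y\mapsto y-a_{21}/(12\alpha)$ and $x\mapsto x-a_{12}/(12\alpha)$ annihilate $a_{21}$ and $a_{12}$ while fixing the degree-$4$ part $\alpha y^2(6x^2\pm y^2)$, bringing $p$ to
\begin{equation*}
p=a_{30}x^3+\big(a_{20}+6\alpha y^2\big)x^2+\big(a_{10}+a_{11}y\big)x+\big(a_{01}y+a_{02}y^2+a_{03}y^3\pm\alpha y^4\big),
\end{equation*}
with the sign $+$ in case (IV) and $-$ in case (V). Thus $p$ has $x$-degree at most $3$, constant leading coefficient $a_{30}$, and $x^2$-coefficient the quadratic $A(y)=a_{20}+6\alpha y^2$.

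For case (IV) I expect genuine submersions to occur and their level sets to be connected. If $a_{30}\neq 0$, then after dividing by $a_{30}$ the equation $p(x,y)-c=0$ is a monic cubic in $x$; computing its discriminant $D(y)$ with Maple, the coefficients of $y^{12}$ and $y^{11}$ cancel and that of $y^{10}$ equals $8/a_{30}^4>0$, so $D$ has even degree with positive leading coefficient and Proposition~\ref{Level_Sets_3} yields connectedness. If $a_{30}=0$, then $p$ is quadratic in $x$ with discriminant $\Delta(y)=B(y)^2-4A(y)C(y)$ of even degree $6$ and negative leading coefficient $-24$; when $a_{20}=0$ the coefficient $A(y)=6\alpha y^2$ vanishes only at $y=0$, and Proposition~\ref{Level_Sets_22tt} applies once its remaining hypothesis \ref{segundo} is checked, whereas when $a_{20}\neq 0$ I would solve the linear equation $\partial p/\partial x=0$ for $x$ and substitute into $\partial p/\partial y$; the resulting numerator is an odd-degree (degree $7$) polynomial in $y$, hence has a real root, producing (away from the at most two zeros of $A$) a zero of $\nabla p$ and showing $p$ is not a submersion.

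For case (V) I claim $p$ is never a submersion, so the statement is vacuous, the indefinite quartic part reversing the decisive signs. In the quadratic regime $a_{30}=0$ the same solve-and-substitute produces, when $a_{20}\neq 0$, an odd-degree (degree $7$) numerator, and when $a_{20}=0$ an even-degree (degree $6$) numerator with negative leading coefficient $-48$ that is positive at $y=0$ (for $a_{10}\neq 0$); either way there is a real root, hence a zero of $\nabla p$. In the cubic regime $a_{30}\neq 0$ the discriminant of the monic cubic has leading coefficient $-8/a_{30}^4<0$, so Proposition~\ref{Level_Sets_3} is unavailable; instead I would view $\partial p/\partial x$ and $\partial p/\partial y$ as the quadratics $M(y)x^2+a(y)x+b(y)$ and $N(y)x^2+c(y)x+d(y)$ with $M\equiv 3a_{30}$ and $N(y)=12\alpha y$, and invoke Theorem~\ref{raiz_22} (or Corollary~\ref{raiz_22_Cor}) for the Maple-computed subresultants $R_0(y)$ and $R_1(y)$ to force a real common zero of the gradient.

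The hard part is exactly this last step. What must be verified is the sign hypothesis \ref{raiz_22_2} of Theorem~\ref{raiz_22} — that $b_lR_0(z)<0$ for some real $z$ — or the surrogate hypothesis of Corollary~\ref{raiz_22_Cor}, and that it persists across every stratum of coefficient space, in particular where $R_0$ would share the root $y=0$ with $N(y)=12\alpha y$ or where $R_1$ affords no convenient sign. An end-count makes the outcome plausible: since $D(y)<0$ for all large $|y|$ independently of the level $c$, every level set of a hypothetical submersion would have three branches as $y\to+\infty$ and three as $y\to-\infty$, hence at least three unbounded components, and so could not be connected. Turning this topological heuristic into the required sign statement for $R_0$, and checking that no coefficient sub-case escapes both a connectivity proposition and a common-zero criterion, is where the Maple subresultant and discriminant computations become indispensable.
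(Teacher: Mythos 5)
Your architecture is the same as the paper's: normalize by an affine change, then in each coefficient stratum either apply a connectivity criterion from Section \ref{section_level_sets} or exhibit a zero of $\nabla p$ via the subresultant criteria of Section \ref{section_common_zeros}. The one structural difference is that you keep the quartic part as $6\alpha x^2y^2\pm\alpha y^4$, whereas the paper swaps $x$ and $y$ to reach the forms \eqref{II_1}--\eqref{II_2}; as a result your two partial derivatives are both quadratic in $x$ and you are led to Theorem \ref{raiz_22}, while the paper pairs a quadratic with a cubic and uses Theorem \ref{raiz_23} and Corollary \ref{raiz_23_Cor}. The piece you actually complete is correct: for case (IV) with $a_{30}\neq 0$ the discriminant of the monic cubic in $x$ does reduce to an even-degree polynomial of degree $10$ with leading coefficient $8/a_{30}^4>0$ (the $y^{12}$ and $y^{11}$ terms cancel exactly as you say), so Proposition \ref{Level_Sets_3} applies; and your sub-case conclusions match the paper's under the dictionary $a_{30}\leftrightarrow a_{03}$, $a_{20}\leftrightarrow a_{02}$.

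The genuine gap is the one you flag yourself, and it is not a loose end but the core of the proposition: in case (V) with $a_{30}\neq 0$ no connectivity criterion is available, so everything rests on verifying, stratum by stratum, the sign hypotheses of Theorem \ref{raiz_22} or Corollary \ref{raiz_22_Cor} for the pair $\bigl(3a_{30}x^2+a(y)x+b(y),\ 12\alpha y\,x^2+c(y)x+d(y)\bigr)$ --- and this verification is never carried out. ``Invoke Theorem \ref{raiz_22} for the Maple-computed $R_0$ and $R_1$'' is a plan, not a proof: one must produce a $z$ with $b_lR_0(z)<0$, or a root of $R_1$ meeting the hypotheses of the corollary, in every stratum, and one must separately dispose of the strata in which $R_0$ shares the root $y=0$ with $N(y)=12\alpha y$, a situation that neither the ``moreover'' clause of Theorem \ref{raiz_22} nor that of Corollary \ref{raiz_22_Cor} covers (they only treat common roots with $M$), so it needs a direct argument on the line $y=0$. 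This case analysis is precisely what the paper's proof consists of for $a_{22}=-1$ (it computes $b_8=-48$ and the explicit quartic $R_1$, then splits on $a_{11}\neq 0$, $a_{01}<0$, $a_{01}>0$, $a_{01}=a_{11}=0$). Your end-count heuristic cannot substitute for it: knowing that every level set of a hypothetical submersion would have six ends does not by itself yield a contradiction, since nothing quoted in the paper forbids a submersion all of whose level sets are disconnected. The smaller deferred checks are also real gaps, not formalities: hypothesis (2) of Proposition \ref{Level_Sets_22tt} fails when the linear-in-$x$ coefficient vanishes at the zero of $A$ and must be replaced by a direct critical-point argument there, and in the solve-and-substitute steps the real root of the odd-degree numerator may land on a zero of $A(y)$ or at $y=0$, where the substitution is invalid.
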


\begin{lemma}
The polynomials of cases (IV) and (V) are equivalent to one of the following 
\begin{align}
p=&a_{10}x+a_{01}y+a_{20}x^2+a_{11}xy+a_{02}y^2+a_{30}x^3+y^3+
x^4+a_{22}x^2y^2,\label{II_1}\\
p=&a_{10}x+a_{01}y+a_{20}x^2+a_{11}xy+a_{02}y^2+a_{30}x^3+x^4+
a_{22}x^2y^2,\label{II_2}
\end{align} 
with $a_{22}=\pm 1$.
\end{lemma}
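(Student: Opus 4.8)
The plan is to reach the two normal forms by a short sequence of the operations permitted by Definition~\ref{def:equi}: linear substitutions, translations, multiplication of $p$ by a nonzero scalar, and addition of a constant. I would organize the reduction in three stages.

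\emph{Stage 1: normalizing the quartic part.} In case (IV) the degree-$4$ homogeneous part is $\alpha(6x^2y^2+y^4)$; multiplying $p$ by $\alpha$ (recall $\alpha^2=1$) turns it into $y^4+6x^2y^2$, and the swap $(x,y)\mapsto(y,x)$ followed by the scaling $y\mapsto y/\sqrt6$ brings it to $x^4+x^2y^2$, so that $a_{22}=1$. In case (V) the quartic part is $\alpha(6x^2y^2-y^4)$; multiplying $p$ by $-\alpha$ produces $y^4-6x^2y^2$, and the same swap and scaling give $x^4-x^2y^2$, so that $a_{22}=-1$. In both cases $p_3$ is carried to a new polynomial of degree at most $3$ whose coefficients I rename, so that $p$ takes the shape
$$
p=(\text{terms of degree }\le 2)+a_{30}x^3+a_{21}x^2y+a_{12}xy^2+a_{03}y^3+x^4+a_{22}x^2y^2,\qquad a_{22}=\pm1 .
$$

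\emph{Stage 2: removing the mixed cubics.} Next I would eliminate $x^2y$ and $xy^2$ by a translation $(x,y)\mapsto(x+s,y+t)$. Since a translation fixes the leading homogeneous form, the degree-$3$ part of the translated polynomial is the old one plus $(s\partial_x+t\partial_y)(x^4+a_{22}x^2y^2)=4s\,x^3+2a_{22}t\,x^2y+2a_{22}s\,xy^2$; thus the new coefficients of $x^2y$ and $xy^2$ are $a_{21}+2a_{22}t$ and $a_{12}+2a_{22}s$. Choosing $t=-a_{21}/(2a_{22})$ and $s=-a_{12}/(2a_{22})$, which is legitimate precisely because $a_{22}=\pm1\neq0$, annihilates both and leaves a cubic part $a_{30}x^3+a_{03}y^3$. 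The constant created by the translation is absorbed by the additive constant $N$ of Definition~\ref{def:equi}, and the altered quadratic and linear terms are absorbed into renamed coefficients.

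\emph{Stage 3: normalizing $y^3$.} Finally I would split on $a_{03}$. If $a_{03}=0$ the polynomial already has the form~\eqref{II_2}. If $a_{03}\neq0$ I normalize this coefficient by the coupled scaling $(x,y)\mapsto(a_{03}x,a_{03}y)$ together with multiplication of $p$ by $a_{03}^{-4}$: a direct check shows that $x^4+a_{22}x^2y^2$ is fixed while $a_{03}y^3$ becomes $y^3$, yielding the form~\eqref{II_1}. I expect this to be the only delicate point, because an isolated rescaling of $y$ would normalize $y^3$ but spoil the coefficient of $x^2y^2$; the idea is to tune the $x$-scaling, the $y$-scaling and the overall factor together so that the already-normalized quartic is preserved at the same moment the $y^3$ coefficient is set to $1$. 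Being diagonal, this scaling does not revive the mixed cubics killed in Stage~2, so the output is exactly~\eqref{II_1}.
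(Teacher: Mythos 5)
Your proposal is correct and follows essentially the same route as the paper: normalize the quartic part to $x^4+a_{22}x^2y^2$ with $a_{22}=\pm1$ via a swap and scaling (the paper uses $T(x,y)=(y,\sqrt6\,x)$ after dividing by $\pm\alpha$), kill $x^2y$ and $xy^2$ by the translation $(x,y)\mapsto(x\mp a_{12}/(2a_{22}),y\mp a_{21}/(2a_{22}))$, and then, when $a_{03}\neq0$, apply the diagonal scaling $(x,y)\mapsto(x/a_{03},y/a_{03})$ combined with multiplication by $a_{03}^{-4}$ to set the $y^3$ coefficient to $1$ while preserving the quartic. The only difference is expository: you make explicit the Taylor-expansion computation of how the translation shifts the cubic coefficients, which the paper leaves implicit.
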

\begin{proof}
Dividing polynomials of cases (IV) and (V) by $\alpha$ and $-\alpha$, respectively, and applying the transformation $T(x,y)=\big(y,\sqrt{6}x\big)$, we obtain they are equivalent to  
$$
a_{10}x+a_{01}y+a_{20}x^2\hspace{-.05cm}+a_{11}xy+a_{02}y^2\hspace{-.05cm}+a_{30}x^3\hspace{-.05cm}+a_{21}x^2y+
a_{12}xy^2\hspace{-.05cm}+a_{03}y^3\hspace{-.05cm}+x^4\hspace{-.05cm}+a_{22}x^2y^2, 
$$
with $a_{22}=\pm 1$. 
Then we compose this with the transformation $T(x,y)=\big(x+a_{12}/(2a_{22}),y+a_{21}/(2a_{22})\big)$ to eliminate $a_{12}$ and $a_{21}$. 
If $a_{03}\neq 0$, we compose the result with $T(x,y)=\big(x/a_{03},y/a_{03}\big)$ and by multiplying it by $1/a_{03}^{4}$ we obtain case \eqref{II_1}. 
On the other hand, if $a_{03}= 0$, we get case \eqref{II_2}. 
\end{proof}

\begin{proof}[Proof of Proposition \ref{Pro_case_II_VI}]
Take case \eqref{II_1} of the lemma. Observe that for each $c\in\R$, $p(x,y)-c$ has the form $y^3+A(x)y^2+B(x)y+C(x)$ as in 
Proposition \ref{Level_Sets_3}. Calculating the discriminant $D(x)$ we observe it is a polynomial of degree $10$ with leader coefficient $a_{22}^3/27$.
Thus if $a_{22}=1$, we have that if $p$ is a submersion,  all its level sets are connected. 

In case $a_{22}=-1$, we observe that $\frac{\partial p}{\partial y}$ and $\frac{\partial p}{\partial x}$ have the forms $M(y)x^2+a(y)x+b(y)$ and $N(y)x^3+c(y)x^2+d(y)x+e(y)$, respectively, with $M(y)=-2y$ and $N(y)=4$. 
With the notations of Theorem \ref{raiz_23}, 
$$
R_0\left(\frac{\partial p}{\partial y},\frac{\partial p}{\partial x},x\right)=b_0+b_1y+\cdots+b_8y^8,
$$
with $b_8=-48$, and 
$$
R_1\left(\frac{\partial p}{\partial y},\frac{\partial p}{\partial x},x\right)=4a_{11}^2+(8a_{01}+6a_{30}a_{11})y+
(16a_{02}+8a_{20})y^2+24y^3-8y^4.
$$
If $a_{11}\neq 0$, there exist $z>0$ such that $R_1(z)=0$. 
Thus $b_8M(z)=96z>0$, and Corollary \ref{raiz_23_Cor}, gives us $p$ is not a submersion. 
Now if $a_{11}=0$, we observe that $b_0=-16a_{01}^3$. If $a_{01}<0$, there exists $z\in\R$ such that $b_8R_0(z)<0$, which by Theorem \ref{raiz_23},
guarantees $p$ is not a submersion. On the other hand, if $a_{01}>0$, there exists $z>0$ such that $R_1(z)=0$ and $b_8M(z)>0$, which again by Corollary \ref{raiz_23_Cor} guarantees that $p$ is not a submersion. 
Finally, if $a_{01}=0$, we see that $\frac{\partial p}{\partial y}(x,0)\equiv 0$ and $\frac{\partial p}{\partial x}(x,0)$ is a polynomial of degree $3$, which always has a zero.

Now in the case  \eqref{II_2}, $\frac{\partial p}{\partial y}$ and $\frac{\partial p}{\partial x}$ also have their  forms as in Theorem \ref{raiz_23}, with $M(y)=2a_{22}y$ and $N(y)=4$. 
Here $R_0(y)=b_0+\cdots+b_6y^6+b_7y^7$, 
with $b_7=-32a_{22}^4a_{02}$. Thus by Theorem \ref{raiz_23}, $p$ is not a submersion if $a_{02}\neq0$.

On the other hand, if $a_{02}=0$, we divide the analysis in two cases: 
$a_{22}=1$ and $a_{22}=-1$. In the first case, if $a_{01}=0$, $y=-a_{11}/\left(2x\right)$ annihilates $\frac{\partial p}{\partial y}$ and transforms   $\frac{\partial p}{\partial x}=0$ in an equation of degree $3$ in $x$ with constant term $a_{10}\neq 0$ (otherwise $\nabla p(0,0)=(0,0)$). Thus we get a zero of $\nabla p$. 
If $a_{01}\neq 0$, we observe that  $p(x,y)-c$ has the form 
$$
A(x)y^2+B(x)y+C(x),
$$
with $A(x)=x^2$, $B(x)=a_{01}+a_{11}x$ and $C(x)=-c+a_{10}x+a_{20}x^2+a_{30}x^3+x^4$. 
If $x=0$, there is exactly one $y$ such that $p(0,y)=0$. Moreover, the discriminant $\Delta(x)$ is a polynomial of degree $6$ with leader coefficient $-4$. 
Therefore, we are under the hypotheses of Proposition \ref{Level_Sets_22tt}, and hence if $p$ is a submersion, all its level sets are connected. 

Finally, in the second case, i.e. $a_{22}=-1$, we observe that $\frac{\partial p}{\partial y}$ and $\frac{\partial p}{\partial x}$ are as in Theorem \ref{raiz_23}, with $M(y)=-2y$ and $N(y)=4$, and 
$$
R_0\left(\frac{\partial p}{\partial y},\frac{\partial p}{\partial y},x\right)=b_0+b_1y+\cdots +b_6 y^6,
$$
with $b_6=-16a_{01}$, and 
$$
R_1\left(\frac{\partial p}{\partial y},\frac{\partial p}{\partial x},x\right)=4a_{11}^2+\left(8a_{01}+6a_{11}a_{30}\right)y+8a_{20}y^2-8y^4.
$$
Then, if $a_{01}\neq 0$ and $a_{11}\neq 0$, we have by Corollary \ref{raiz_23_Cor} that $p$ is not a submersion. If $a_{01}\neq 0$ and $a_{11}=0$, we also get by this corollary that $p$ is not a submersion.
On the other hand, if $a_{01}=a_{11}=0$, it is simple to conclude that $\nabla p$ has a zero. 
\end{proof}

\section{The polynomials of Theorem \ref{Classification}}\label{quase_final}
Let $M\subset \R^2$ be an open set and $f:M\to\R$ be a smooth submersion. We denote by $\mathscr{F}(f)$ the foliation of $M$ given by the connected components of the level sets of $f$ (Lemma \ref{unbounded}). 
We recall the concept of half-Reeb component.
\begin{definition}
Let  $f:\R^2\to\R$ be a smooth submersion, $h_0:\R^2\setminus\{0\}\to\R$ be defined by $h_0(x,y)=xy$, and  
$$
B=\left\{(x,y)\in[0,2]\times[0,2]\ |\ 0<x+y\leq 2\right\}.
$$ 
We say that $\mathcal{A}\subset \R^2$ is a \emph{half-Reeb component}, or simply a \emph{hRc}, of $\mathscr{F}(f)$ if there is a homeomorphism $T:B\to\mathcal{A}$ which is a topological equivalence between $\mathscr{F}(h_0)|_{B}$ and $\mathscr{F}(f)|_{\mathcal{A}}$ with the following properties:
\begin{enumerate}
\item The segment $\{(x,y)\in B\ |\ x+y=2\}$ is sent by $T$ onto a transversal section to the leaves of $\mathscr{F}(f)$ in the complement of $T(1,1)$. This section is called the \emph{compact edge} of $\mathcal{A}$.
\item Both the segments $\{(x,y)\in B\ |\ x=0\}$ and $\{(x,y)\in B\ |\ y=0\}$ are sent by $T$ onto full half leaves of $\mathscr{F}(f)$, called the \emph{non-compact edges} of $\mathcal{A}$.
\end{enumerate} 
\end{definition}

\begin{figure}[!htpb]
\begin{center}
\includegraphics[scale=.55]{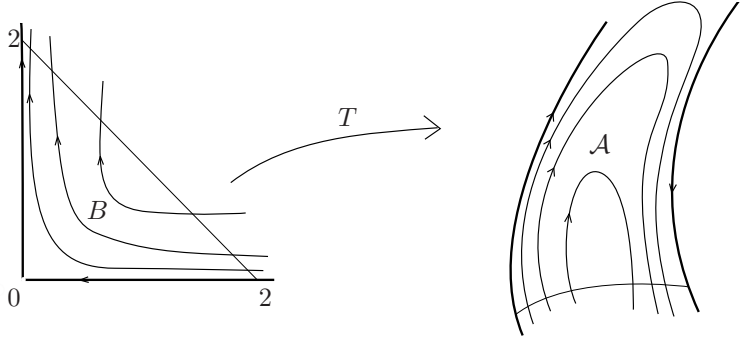}
\begin{picture}(10,20)
\put(-295,12){$0$}
\put(-200,12){$2$}
\put(-295,110){$2$}
\put(-265,45){$B$}
\put(-75,70){$\mathcal{A}$}
\put(-170,80){$T$}
\end{picture}
\caption{Definition of hRc}\label{HalfReeb}
\end{center}
\end{figure}

The existence of hRc is equivalent to the existence of inseparable leaves on the foliation $\mathscr{F}(f)$ (see \cite{GJLT} for details). 
Moreover, the following proposition of \cite{BS} relates this definition to connectedness of level sets. 

\begin{proposition}
Let  $f:\R^2\to\R$ be a smooth submersion. Then $\mathscr{F}(f)$ has a hRc if and only if there exists $c\in\R$ such that $f^{-1}\{c\}$ is not connected. 
\end{proposition}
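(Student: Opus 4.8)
The plan is to route both implications through the equivalence recalled just above — that $\mathscr{F}(f)$ has a hRc if and only if it has a pair of \emph{inseparable leaves} (see \cite{GJLT}) — so that it suffices to show $\mathscr{F}(f)$ admits inseparable leaves exactly when some level set $f^{-1}\{c\}$ is disconnected. The natural object for this is the leaf space $Q=\R^2/\mathscr{F}(f)$, with quotient map $\pi:\R^2\to Q$. By Lemma \ref{unbounded} the fibers of $\pi$ are precisely the connected components of the level sets of $f$, so $f$ descends to a continuous map $\bar f:Q\to\R$ with $\bar f\circ\pi=f$. Using flow boxes for the submersion $f$ (in suitable local coordinates $f(x,y)=y$, and since $f$ is constant on leaves each leaf meets a small box in a single plaque $y=\mathrm{const}$), one checks that $Q$ is a connected, second countable $1$-manifold, possibly \emph{non-Hausdorff}, and that $\bar f$ is a local homeomorphism, hence an open map. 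Two leaves are inseparable precisely when the corresponding points of $Q$ admit no disjoint neighborhoods, i.e. exactly when $Q$ fails to be Hausdorff; so the proposition reduces to the statement that $Q$ is non-Hausdorff if and only if $\bar f$ is not injective.

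For the implication ``hRc $\Rightarrow$ disconnected level set'', suppose $q_1\ne q_2$ in $Q$ cannot be separated. If $\bar f(q_1)\ne\bar f(q_2)$, then disjoint open intervals around these two values would pull back, by continuity of $\bar f$, to disjoint neighborhoods of $q_1,q_2$; hence $\bar f(q_1)=\bar f(q_2)=:c$. Consequently $\pi^{-1}(q_1)$ and $\pi^{-1}(q_2)$ are two distinct connected components of $f^{-1}\{c\}$, so this level set is disconnected. Translating back through the cited equivalence, the existence of a hRc forces a disconnected level set.

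For the converse, suppose some $f^{-1}\{c\}$ is disconnected, so that $\bar f$ is not injective, and assume for contradiction that $Q$ is Hausdorff. Then $Q$ is a connected, second countable, Hausdorff $1$-manifold without boundary, hence homeomorphic to $\R$ or to $S^1$ by the classification of $1$-manifolds. The case $S^1$ is ruled out because the open map $\bar f$ would send the compact connected $S^1$ onto a nonempty open and compact, hence clopen, subset of $\R$, forcing the image to be all of $\R$, which is impossible. Thus $Q\cong\R$; but a local homeomorphism $\R\to\R$ is continuous and locally injective on an interval, hence strictly monotone and in particular injective, contradicting the non-injectivity of $\bar f$. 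Therefore $Q$ is non-Hausdorff, which produces a pair of inseparable leaves and, by the cited equivalence, a hRc.

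The step I expect to be the main obstacle is the structural claim underpinning everything else: that the leaf space $Q$ is a (possibly non-Hausdorff) second countable $1$-manifold and that $\bar f$ is a genuine local homeomorphism. This is exactly where the smoothness and submersion hypotheses on $f$, together with the foliation structure from Lemma \ref{unbounded}, are used, and it is what makes the soft classification-of-$1$-manifolds argument available; once it is in place, both directions are pure point-set topology.
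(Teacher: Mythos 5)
Your proof is correct, but note that there is nothing in this paper to compare it against: the proposition is stated here without proof and attributed to \cite{BS}. Your route is the ``soft'' one: pass to the Haefliger--Reeb leaf space $Q=\R^2/\mathscr{F}(f)$, check that $Q$ is a connected, second countable, possibly non-Hausdorff $1$-manifold on which $f$ descends to a local homeomorphism $\bar f$, identify inseparable leaves with non-Hausdorff pairs of points of $Q$, and then use the classification of Hausdorff $1$-manifolds together with the equivalence between half-Reeb components and inseparable leaves that the paper itself recalls from \cite{GJLT}. This differs from the argument in \cite{BS}, which builds a half-Reeb component directly from a transversal arc joining two components of a disconnected level set; your version reduces both implications to point-set topology once the structure of $Q$ is in place, at the price of leaning entirely on the quoted \cite{GJLT} equivalence rather than exhibiting the hRc. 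Two steps deserve one more line each, though neither is a gap: (i) the claim that a leaf meets a flow box in a single plaque should be justified by noting that a leaf is a connected component of a level set and $f$ is the second coordinate on the box, so the intersection lies in one horizontal slice, and any plaque in that slice meeting the leaf is a connected subset of the same level set and hence contained in the leaf — this is also what makes $\pi$ injective on a vertical transversal; (ii) the openness of $\pi$, which you need both for second countability of $Q$ and for $\bar f$ to be a local homeomorphism, comes from the standard fact that the saturation of an open set by a foliation is open. With those remarks added, the argument is complete.
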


A particular version of the following result is already  contained in \cite{BS}.  The proof in the general case is a simple extension of that and we add it here for completeness.

\begin{proposition}\label{integral}
Let $f:\R^2\to\R$ be a smooth submersion, $\mathcal{A}\subset\R^2$ be a hRc of $\mathscr{F}(f)$ and $U$ be a neighbourhood of $\mathcal{A}$. 
If $h:U\to[0,\infty)$ is a measurable function such that 
$$
\int_{\mathcal{A}}h=\infty,
$$
then there is not a differentiable $g:U\to\R$ such that $\det D(f,g)=h$ at $U$.
\end{proposition}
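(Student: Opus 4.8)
The plan is to argue by contradiction: assuming that a differentiable $g:U\to\R$ with $\det D(f,g)=h$ does exist, I will show that $\int_{\mathcal{A}}h$ must be \emph{finite}, contradicting the hypothesis. The whole argument rests on integrating $h$ leaf by leaf across $\mathcal{A}$ and bounding the contribution of each leaf by the oscillation of $g$ over the compact edge of $\mathcal{A}$.

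The first step is a computation along a single leaf. Fix a connected component $L$ of a level set $\{f=c\}$ and parametrize it by arc length, with unit tangent $\tau=(-f_y,f_x)/|\nabla f|$, which is well defined because $f$ is a submersion. Differentiating gives $\frac{d}{d\ell}(g\circ L)=\nabla g\cdot\tau=(f_xg_y-f_yg_x)/|\nabla f|=h/|\nabla f|\geq 0$, so $g$ is \emph{monotone non-decreasing} along $L$. This is precisely where the sign hypothesis $h\geq 0$ enters, and it is the crucial structural input; in particular the oscillation of $g$ over any sub-arc of $L$ is realized at the endpoints of that sub-arc.

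Next I would use the half-Reeb structure to locate those endpoints. Pulling the foliation back by the topological equivalence $T:B\to\mathcal{A}$, the leaves of $\mathscr{F}(f)$ meeting $\mathcal{A}$ correspond to the arcs $\{xy=c\}\cap B$ with $0<c<1$; each such arc is \emph{compact} with both endpoints on the segment $\{x+y=2\}$, that is, on the compact edge. Hence, for $c$ in the (bounded) range of $f$ on $\mathcal{A}$, the portion $L_c:=\{f=c\}\cap\mathcal{A}$ is a compact sub-arc of a leaf whose two endpoints lie on the compact edge. By the previous step $g$ is monotone along $L_c$, so its maximum and minimum over $L_c$ are attained at these endpoints; since the compact edge is compact and $g$ is continuous, $G:=\sup_{\text{compact edge}}|g|<\infty$, and therefore $\mathrm{osc}(g,L_c)\leq 2G$ for every such $c$. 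The range of $f$ on $\mathcal{A}$ is itself bounded, say contained in $[a,b]$, because every leaf other than the two non-compact edges meets the compact edge, on which $f$ is bounded.

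Finally I would assemble these pieces through the co-area formula. With $\psi=h/|\nabla f|$ one gets
\[
\int_{\mathcal{A}}h=\int_{\mathcal{A}}\frac{h}{|\nabla f|}\,|\nabla f|=\int_a^b\Big(\int_{L_c}\frac{h}{|\nabla f|}\,d\ell\Big)\,dc,
\]
and by the monotonicity computation the inner integral equals the total variation of $g$ along $L_c$, hence is at most $\mathrm{osc}(g,L_c)\leq 2G$. Thus $\int_{\mathcal{A}}h\leq 2G(b-a)<\infty$, the desired contradiction. The step I expect to be the main obstacle is transferring the compactness of the model arcs $\{xy=c\}\cap B$ through the homeomorphism $T$ so as to guarantee that the endpoints of $L_c$ genuinely sit on the compact edge; everything else (the monotonicity of $g$ and the co-area bookkeeping) is routine, apart from the mild measure-theoretic care needed because $g$ is only assumed differentiable, so that $\int_{L_c}h/|\nabla f|\,d\ell\leq\mathrm{osc}(g,L_c)$ is the inequality one should invoke rather than an exact equality.
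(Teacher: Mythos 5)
Your argument is correct and arrives at the intended contradiction, but by a technically different route from the paper. The paper exhausts $\mathcal{A}$ by compact regions $B_n$ bounded by an arc $\gamma_n$ of the compact edge and an arc $\beta_n$ of a leaf, applies Green's theorem to the $1$-form $-g\,df$ (whose exterior derivative is $\det D(f,g)\,dx\wedge dy$), observes that $\beta_n$ contributes nothing since $df$ vanishes along leaves, and so bounds $\int_{B_n}h$ uniformly by the integral of $-g\,df$ over the compact edge; the Monotone Convergence Theorem (this is where $h\geq 0$ enters there) then forces $\int_{\mathcal{A}}h<\infty$. You instead disintegrate the integral via the co-area formula and bound each fibre integral by the oscillation of $g$ on the compact edge, using the monotonicity of $g$ along leaves (where $h\geq 0$ enters for you) to justify $\int_{L_c}(g\circ L)'\,d\ell\leq\mathrm{osc}(g,L_c)$ when $g$ is merely differentiable. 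The two computations are essentially Fubini-dual versions of the same identity $h\,dx\,dy=df\wedge dg$, both saying that the integral over the hRc is controlled by the behaviour of $f$ and $g$ on the compact edge alone. Your version makes the role of the sign of $h$ more geometric (monotonicity of $g$ along leaves, which is exactly the heuristic the paper invokes in its introduction), at the cost of needing the co-area formula and the structure of the individual fibres $L_c$: that each is a single compact arc with both endpoints on the compact edge does follow from the topological equivalence with the model $xy$ on $B$ together with the local injectivity of $f$ along the transversal compact edge (so distinct model arcs cannot land in the same level set), and the two exceptional values of $c$ carried by the non-compact edges are negligible for the $dc$-integration. The paper's Green's-theorem version avoids any leaf-by-leaf analysis and uses only that $g$ is bounded on the compact edge.
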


\begin{proof}
Let $\gamma:[0,1]\to\mathcal{A}$ be an injective curve that parametrize the compact edge of $\mathcal{A}$. 
For $n$ big enough, the leaf of $\mathscr{F}(f)$ through $\gamma(1/n)$ cuts $\gamma$ in $\gamma(t_n)$, with $t_n>1/n$. 
We denote by $\beta_n$ the interval of this leaf between $\gamma(1/n)$ and $\gamma(t_n)$, and by $\gamma_n$ the interval of $\gamma$ between this two points. 
We denote yet by $B_n$ the compact region bounded by $\gamma_n$ and $\beta_n$. By the Monotone Convergence Theorem, 
\begin{equation}\label{89077}
\int_{\mathcal{A}}h=\lim_{n\to\infty} \int_{B_n}h.
\end{equation}
If there is $g$ such that $h=\det D(f,g)$ in $U$,  Green's Theorem gives 
$$
\int_{B_n}h=\int_{\gamma_n}-g\left(\frac{\partial f}{\partial x}dx+\frac{\partial f}{\partial y}dy\right) \leq \int_{\gamma}-g\left(\frac{\partial f}{\partial x}dx+\frac{\partial f}{\partial y}dy\right) ,
$$
since $\beta_n$ is  orthogonal to $\nabla f$. Thus $\int_{B_n}h$ is uniformly bounded, a contradiction with \eqref{89077} and $\int_{\mathcal{A}}h=\infty$.
\end{proof}

We shall also need the following simple lemma (a reduced version of it was already   used  in  \cite{BS}). 
\begin{lemma}\label{Calculus}
Let $b_1,b_2>0$, and $\phi_1:(0,b_1)\to\R$ and $\phi_2:(b_2,\infty)\to\R$ be defined by $\phi_1(t)=\phi_2(t)=\sum_{j=k_1}^{k_2} c_j t^j$, where $k_1\leq k_2\in\Z$ and $c_{k_1}, \, c_{k_2}\neq 0$. If $\phi_i(t)>0$, for all $t$ in the domain of $\phi_i$, then
\begin{enumerate}
\item\label{1045} $c_{k_i}>0$, for $i=1,2$. 
\item\label{1046} If $k_1\leq -1$, then $\int_0^{b_1}\phi_1(t)dt=\infty$.
\item\label{1047} If $k_2 \geq -1$, then $\int_{b_2}^{\infty}\phi_2(t)dt=\infty$.
\end{enumerate}
\end{lemma}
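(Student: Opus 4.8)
The plan is to reduce every assertion to the elementary fact that near the relevant endpoint of its interval of definition the Laurent polynomial $\phi_i$ is controlled by a single extreme monomial: the lowest power $c_{k_1}t^{k_1}$ governs the behaviour as $t\to 0^+$, and the highest power $c_{k_2}t^{k_2}$ governs the behaviour as $t\to\infty$.

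For item \ref{1045}, I would first write, for $t\in(0,b_1)$,
\[
\phi_1(t)=t^{k_1}\Big(c_{k_1}+\sum_{j=k_1+1}^{k_2}c_j\,t^{\,j-k_1}\Big),
\]
where every exponent $j-k_1$ in the sum is strictly positive, so the bracketed factor tends to $c_{k_1}$ as $t\to 0^+$. Since $\phi_1(t)>0$ and $t^{k_1}>0$ on $(0,b_1)$, the bracketed factor is positive there, hence its limit $c_{k_1}$ is $\ge 0$; as $c_{k_1}\neq 0$ by hypothesis, $c_{k_1}>0$. The statement for $\phi_2$ is the mirror image: factoring out $t^{k_2}$ produces a bracket $c_{k_2}+\sum_{j<k_2}c_j\,t^{\,j-k_2}$ whose exponents are all negative, so it converges to $c_{k_2}$ as $t\to\infty$, and the identical positivity argument yields $c_{k_2}>0$.

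For item \ref{1046}, I would invoke \ref{1045} to fix $c_{k_1}>0$. Because the bracketed factor above converges to $c_{k_1}$ as $t\to 0^+$, there is $\delta\in(0,b_1)$ with $\phi_1(t)\ge \tfrac{1}{2}c_{k_1}\,t^{k_1}$ for all $0<t<\delta$. Since $\phi_1\ge 0$ throughout $(0,b_1)$,
\[
\int_0^{b_1}\phi_1(t)\,dt\ \ge\ \frac{c_{k_1}}{2}\int_0^{\delta}t^{k_1}\,dt,
\]
and the right-hand integral diverges exactly when $k_1\le -1$, giving the claim. Item \ref{1047} is obtained symmetrically: with $c_{k_2}>0$, choose $R>b_2$ so that $\phi_2(t)\ge \tfrac{1}{2}c_{k_2}\,t^{k_2}$ for $t>R$, and bound $\int_{b_2}^{\infty}\phi_2$ below by $\tfrac{c_{k_2}}{2}\int_R^{\infty}t^{k_2}\,dt$, which diverges precisely when $k_2\ge -1$.

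There is no genuine obstacle here; the lemma is a routine asymptotic comparison. The only point demanding attention is the bookkeeping: matching each one-sided limit to the correct extreme monomial ($k_1$ at $0$, $k_2$ at $\infty$) and observing that the divergence threshold for $\int t^{k}$ flips between $k\le -1$ near $0$ and $k\ge -1$ near $\infty$, which is exactly the asymmetry recorded in items \ref{1046} and \ref{1047}.
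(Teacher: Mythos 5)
Your proof is correct and follows essentially the same route as the paper: both arguments isolate the extreme monomial $c_{k_i}t^{k_i}$, deduce $c_{k_i}>0$ from the one-sided limit, and reduce the divergence of the integral to that of $\int t^{-1}$ near the relevant endpoint. The only cosmetic difference is that the paper weights the integrand by $t^{-1-k_i}$ and compares over the whole interval, whereas you lower-bound $\phi_i$ by $\tfrac{1}{2}c_{k_i}t^{k_i}$ on a neighbourhood of the endpoint; both reductions are immediate.
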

\begin{proof}
If we multiply $\phi_i(t)$ by $t^{-k_i}$, hypothesis in \ref{1045} gives  that $\sum_{j=k_1}^{k_2}c_{j}t^{j-k_i}>0$, for all $t$ in the domain of $\phi_i$. 
Taking $t\to 0$, if $i=1$ or $t\to\infty$, if $i=2$, it follows that $c_{k_i}>0$, proving \ref{1045}. The proof of \ref{1046} and \ref{1047} follows by H\"older's inequality: defining $I_1=(0,b_1)$ and $I_2=(b_2,\infty)$, we get that $b_i^{-1-k_i}\int_{I_i}\phi_i(t)dt \geq \int_{I_i}t^{-1-k_i}\phi_i(t)dt=\infty$, since for both $i=1,2$, the last integral is $\int_{I_i}c_{k_i}t^{-1}dt$ plus the integral of a $L^1(I_i)$ function.    
\end{proof}

We now apply these results to analyse each polynomial of Theorem  \ref{Classification} in each of the subsections bellow. 
As a consequence, we will obtain the following theorem.

\begin{theorem}\label{naoq}
If $p(x,y)$ is one of the polynomials of Theorem \ref{Classification}, then there is not a polynomial $q(x,y)$ such that $\det D(p,q)(x,y)>0$, $\forall (x,y)\in\R^2$.  
\end{theorem}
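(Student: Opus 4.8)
The plan is to argue by contradiction for each of the four polynomials, using the half-Reeb component as the basic object. Since every $p$ listed in Theorem~\ref{Classification} has, by construction, a disconnected level set, the proposition relating half-Reeb components to disconnected level sets guarantees that $\mathscr{F}(p)$ possesses a hRc $\mathcal{A}$. Suppose, toward a contradiction, that there is a polynomial $q$ with $h:=\det D(p,q)>0$ on all of $\R^2$. The engine that produces the contradiction is Proposition~\ref{integral}: it suffices to prove that $\int_{\mathcal{A}}h=\infty$. The structural fact I would exploit throughout is that $\det D(p,q)\neq 0$ forces $q$ to be \emph{strictly monotone} along every leaf of $\mathscr{F}(p)$, since the level sets of $q$ are transversal to those of $p$. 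Thus, before treating the four cases, I would locate and explicitly parametrize $\mathcal{A}$ for each $p$: determine the critical value $c$ at which $p^{-1}\{c\}$ disconnects, identify the inseparable (central) leaf and the two non-compact edges, and introduce coordinates adapted to the foliation so that $h\,dx\,dy$ can be integrated leaf by leaf.

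For the first group, polynomials \ref{1} and \ref{2}, I would follow the method of \cite{BS}. After restricting $h$ to a suitable family of arcs filling $\mathcal{A}$ and rewriting the resulting integral in a parameter $t$, the monotonicity of $q$ along the leaves forces the integrand to be a \emph{positive} Laurent polynomial $\phi(t)=\sum_{j=k_1}^{k_2}c_j t^{j}$ on the relevant interval. Lemma~\ref{Calculus} then applies: its part \ref{1045} pins down the sign of the extreme coefficient, and parts \ref{1046}--\ref{1047} give $\int\phi=\infty$ as soon as the extreme exponent lies in the divergent range. This yields $\int_{\mathcal{A}}h=\infty$, and Proposition~\ref{integral} closes the argument.

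For the second group, polynomials \ref{3} and \ref{4}, the same reduction no longer settles the sign, because the asymptotics of the non-compact edges (governed by the $x^2y^2$ term) no longer place the extreme exponent in the divergent range of Lemma~\ref{Calculus}. Instead, I would encode the monotonicity of $q$ along the leaves of $\mathcal{A}$ into a single one-variable polynomial $L(\theta)=\sum_{j=0}^{N}b_j\big(2(j+1)\theta+2j+1\big)\theta^{j}$, whose coefficients $b_j$ are explicit expressions in the coefficients of $q$; the factor $2(j+1)\theta+2j+1$ reflects the derivative-along-the-leaf (integration-by-parts) bookkeeping. Positivity of $\det D(p,q)$ translates into $L(\theta)>0$ on the parameter range, so Lemma~\ref{bruna} forces $L\equiv 0$, i.e. all $b_j=0$. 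This degenerates $q$ on the leaves of $\mathcal{A}$ and contradicts $\det D(p,q)\neq 0$, finishing these cases.

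I expect the main obstacle to be the second group. The delicate point is not the appeal to Proposition~\ref{integral}, which is uniform across all cases, but the precise algebraic normalization that turns ``there exists $q$ with $\det D(p,q)>0$'' into ``$L$ is a positive polynomial of the prescribed shape''. Getting the coefficient pattern $2(j+1)\theta+2j+1$ to come out exactly, and verifying that $L\equiv 0$ indeed forces the required degeneracy of $q$, is the crux; the positivity statement itself, Lemma~\ref{bruna}, is the genuinely hard analytic input, which is why its proof is deferred to the Appendix.
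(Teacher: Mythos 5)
Your high-level architecture matches the paper's: explicit half-Reeb components plus Proposition~\ref{integral} and Lemma~\ref{Calculus} for cases \ref{1} and \ref{2}, and the polynomial $L(\theta)$ plus Lemma~\ref{bruna} for cases \ref{3} and \ref{4}. (One small correction for the first group: the positivity of the Laurent-polynomial integrand comes simply from $h=\det D(p,q)>0$ pointwise, not from monotonicity of $q$ along leaves; the paper in fact proves $\int_{\mathcal{A}}h=\infty$ for \emph{every} positive polynomial $h$, with no reference to $q$.)

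There is, however, a genuine gap in your treatment of the second group. Your opening claim that ``it suffices to prove $\int_{\mathcal{A}}h=\infty$'' and that ``the appeal to Proposition~\ref{integral} is uniform across all cases'' cannot be carried out for case \ref{3}: there the hRc $\mathcal{A}=\{x\geq 1,\ -1/x^2\leq y\leq 0\}$ has \emph{finite} area, so $\int_{\mathcal{A}}h<\infty$ already for $h\equiv 1$, and no divergence argument can rule out all candidates $q$. The paper instead uses Proposition~\ref{integral} in a different role here: assuming $q=\sum b_{ij}x^iy^j$ exists, it shows that $\int_{\mathcal{A}}h=\infty$ would follow \emph{unless} $\tau=\max\{i-2j-3 : b_{ij}\neq 0\}<-1$, so Proposition~\ref{integral} forces this restriction on the support of $q$. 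Only because $b_{ij}=0$ whenever $i-2j-1>0$ does the substitution $y=\theta x^{-2}$, followed by $x\to\infty$, isolate the terms with $i-2j-1=0$ and yield $L(\theta)\geq 0$; your phrase ``positivity of $\det D(p,q)$ translates into $L(\theta)>0$'' skips exactly this step, and without it the leading behaviour in $x$ carries no sign information about $L$. Finally, the endgame must be made concrete: Lemma~\ref{bruna} gives $L\equiv 0$, hence in particular $b_{10}=0$, which contradicts $h(0,0)=-b_{10}>0$; ``this degenerates $q$ on the leaves and contradicts $\det D(p,q)\neq 0$'' is not yet an argument. Case \ref{4} needs the same two-stage structure, with an auxiliary region $\widetilde{\mathcal{A}}$ inside the hRc to run the $\tau<-1$ step.
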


\subsection{Case \ref{1}}
We consider the polynomial $p(x,y)=y+xy^2+y^4=y(1+xy+y^3)$. It is simple to observe that the following set is a hRc of $\mathscr{F}(p)$:
$$
\mathcal{A}=\left\{(x,y)\in\R^2\ |\ -1\leq y<0 \textnormal{ and } 0\leq x\leq -1/y-y^2 \right\}\cup\left\{(0,x)\ |\ x\geq 0\right\}.
$$ 
\emph{We claim that given a polynomial $h(x,y)=\sum_{i+j\leq k}b_{ij}x^iy^j$ such that $h(x,y)>0$, $\forall (x,y)\in\R^2$, then $\int_{\mathcal{A}}h=\infty$}. Thus by Proposition \ref{integral},  there is not a polynomial $q(x,y)$ such that $\det D(p,q)(x,y)>0$.

To prove the claim, we define
\begin{equation}\label{tau1}
\tau=\min\left\{j-i-1\ |\ 0\leq i+j\leq k \textnormal{ and } b_{ij}\neq 0\right\}.
\end{equation}
Applying the change of variables $(x,y)\mapsto \left(-xy/(1+y^3),-y\right)$ in the interior of $\mathcal{A}$, we obtain 
\begin{align}\label{Integral}
\int_{\mathcal{A}}h&=
\int_{0}^1\int_{0}^{1}\sum_{i+j\leq k}b_{ij}(-1)^jx^iy^{j-i-1}
\left(1-y^3\right)^{i+1}dxdy \notag\\
&=\int_{0}^1\int_{0}^1\big(s(x)y^{\tau}+s_1(x)y^{\tau +1}+\cdots\big)dydx,
\end{align}
where
$$
s(x)=\sum_{\tatop{i+j\leq k}{j-i-1=\tau}}b_{ij}(-1)^{j}x^i, 
$$
and $s_1(x),s_2(x),\ldots$ are suitable polynomials in $x$.

By \eqref{tau1}, the polynomial  $s(x)$ is not identically zero, hence  there exists a $0<c\leq 1$ such that $s(x)\neq 0$, $\forall x\in(0,c)$. Since $h(x,y)>0$, for all $(x,y)\in\R^2$, it follows by Lemma \ref{Calculus} that 
$s(x)>0$ for all $x\in(0,c)$. Moreover, as $b_{00}\neq0$, we have that $\tau<0$ and thus, by applying Lemma \ref{Calculus} once more, it follows that, for each $x\in (0,c)$, 
$$
\int_{0}^{1}\left(s(x)y^{\tau}+s_1(x)y^{\tau+1}+\cdots\right)dy=\infty.
$$
Then $$
\int_0^c\int_0^1\left(s(x)y^{\tau}+s_1(x)y^{\tau+1}+\cdots\right)dydx=\infty,
$$
which by  \eqref{Integral} gives that $\int_{\mathcal{A}}h=\infty$, and the claim is proven.

\subsection{Case \ref{2}}
We consider  the quite analogous case of the polynomial $p(x,y)=y+a_{02}y^2+xy^3$, with $a_{02}=0$ or $1$. We observe that the closure of the following  set is a hRc of $\mathscr{F}(p)$.
$$
\mathcal{A}=\left\{(x,y)\in\R^2\ |\ -1\leq y<0 \textnormal{ and } -{1}/{y^2}-{a_{02}}/{y}\leq x
\leq a_{02}-1\right\}.
$$
\emph{We claim that given a positive polynomial $h(x,y)$, then $\int_{\mathcal{A}}h=\infty$}. As above, this will show there is not a polynomial $q$ such that $\det D(p,q)>0$.

The proof of this claim is similar to the one made above: by defining $\tau=\min\{j-2i-2\ |\ b_{ij}\neq 0\}$ and taking the bounded set $B=\{(x,y)\ |\ -1\leq y<0 \textnormal{ and } a_{02}-1\leq x\leq 0\}$, we get\footnote{Apply the change of variables $(x,y)\mapsto\left(-xy^2/(1+a_{02}y),-y\right)$.}   
\begin{align*}
\int_{\mathcal{A}\cup B}h &=\int_0^1\int_0^1\sum_{i+j\leq k}b_{ij}(-1)^{i+j}x^iy^{j-2i-2}
\left(1-a_{02}y\right)^{i+1}dydx. 
\end{align*}
Then since $\tau<0$, we get as before that $\int_{\mathcal{A}\cup B}h=\infty$, hence $\int_{\mathcal{A}}h=\infty$, proving the claim.

\subsection{Case \ref{3}}\label{33333}
We consider now the polynomial $p(x,y)=y+x^2y^2$. This is quite different from the former ones. We first observe that the following set  is a hRc of $\mathscr{F}(p)$.
\begin{equation*}\label{thirdcase}
\mathcal{A}=\left\{(x,y)\in\R^2\ |\ x\geq 1 \textnormal{ and } -1/x^2\leq y\leq 0\right\}.
\end{equation*}
Then $\int_{\mathcal{A}}1=-\int_1^{\infty}1/x^2dx<\infty$, and thus Proposition \ref{integral} can not be used to prove that there is not a polynomial $q$ such that $\det D(p,q)=1$, for example. 
But we will use this proposition to eliminate candidates to be the polynomial $q$. Then we will use a different argument to show that there is not such a polynomial.

Let us suppose there exist $q(x,y)=\sum_{i+j\leq k}b_{ij}x^i y^j$ such that 
$$
h(x,y)=\det D(p,q)(x,y)=\sum_{i+j\leq k}b_{ij}\left(2(j-i)x^{i+1}y^{j+1}-ix^{i-1}y^j\right)>0,
$$
for all $(x,y)\in\R^2$. 
We define 
$$
\tau=\max\left\{i-2j-3\ |\ 0\leq i+j\leq k \textnormal{ and }b_{ij}\neq 0\right\},
$$
and \emph{we claim that $\tau<-1$}. Indeed, calculating $\int_{\mathcal{A}}h$ by applying the change of variables $(x,y)\mapsto (x,-x^2y)$, we obtain  
\begin{align}\label{793035298760012}
\int_{\mathcal{A}}h &  = \int_0^1\int_1^{\infty} \sum_{i+j\leq k}b_{ij}(-1)^{j+1}\big(2(j-i)y+i\big)y^jx^{i-2j-3}dxdy \\ \notag 
 & =\int_0^1\int_1^{\infty}\left( s(y)x^{\tau}+s_1(y)x^{\tau-1}+\cdots\right)dxdy,
\end{align}
where 
$$
s(y)=\sum_{\tatop{i+j\leq k}{i-2j-3=\tau}}b_{ij}(-1)^{j+1}\left(2(j-i)y+i\right)y^j
$$
and $s_i(y)$ are suitable polynomials in $y$. If $\tau \geq -1$, we have that  $s(y)$ is not identically zero and thus  there exists  $c\leq 1$ such that $s(y)\neq 0$ in $(0,c)$. 
Moreover, since $h(x,y)$ is a positive polynomial, it follows by Lemma \ref{Calculus} that $s(y)>0$ in $(0,c)$. By the same lemma, it follows  that the above integral is infinite. 
But this contradicts Proposition  \ref{integral} (since we are supposing there exists $q$ such that $\det D(p,q)=h$). Hence the claim is proven.

Now since we are supposing $h(x,y)>0$, for all $(x,y)\in\R^2$, considering $y=\theta x^{-2}$ we have that 
$$
\sum_{i+j\leq k}b_{ij}\big(2(j-i)\theta-i\big)\theta^{j}x^{i-2j-1}>0
$$
for all $x,\theta\in\R$, with $x\neq 0$. Since $b_{ij}=0$ for $i-2j-1>0$ (as $\tau <-1$), we have that 
\begin{equation}\label{ldetheta}
L(\theta)=\sum_{\tatop{i+j\leq k}{i-2j-1=0}}b_{ij}\big(2(j-i)\theta-i\big)\theta^{j}\geq 0,
\end{equation}
for all $\theta\in\R$. Then by Lemma \ref{bruna} bellow, it follows that $L(\theta)$ is identically zero, which guarantees in particular that $b_{10}=0$. But this is a contradiction with $-b_{10}=h(0,0)>0$.
Hence we conclude there is not a polynomial $q(x,y)$ such that $\det D(p,q)(x,y)>0$, for all $(x,y)\in\R^2$.

\subsection{Case \ref{4}}
Finally, we consider $p(x,y)=y+a_{02}y^2+y^3+x^2y^2$, with $a_{02}^2<3$. We shall use the notations of Subsection \ref{33333}. We first observe that the closure of
$$
\mathcal{A}=\left\{(x,y)\in\R^2\ |\ -1\leq y<0 \textnormal{ and } \sqrt{2-a_{02}}\leq x\leq \sqrt{-1/y-a_{02}-y}\right\}
$$
is a half-Reeb component of $\mathscr{F}(p)$. 
Given $m<1$, since the inequality $m^2<1+a_{02}y+y^2$ is valid for all $y\in (b,0)$, for some $b\geq-1$, we have that  
\begin{equation}\label{desigualdade}
m\sqrt{-1/y}<\sqrt{-1/y-a_{02}-y},\ \ \forall y\in(b,0).
\end{equation}  
We suppose also that $b\geq -m^2$ and  define
$$
\widetilde{\mathcal{A}}=\left\{(x,y)\in\R^2\ |\ x\geq m/\sqrt{-b}\  \textnormal{ and } -m^2/x^2\leq y< 0\right\}.
$$
Now if  $(x,y)\in \widetilde{\mathcal{A}}$, then  $b\leq y<0$ and $m/\sqrt{-b}\leq x\leq m\sqrt{-1/y}$. 
Hence  by \eqref{desigualdade}, we have that $(x,y)\in \mathcal{A}\cup C$, where $C$ is the bounded set $C=\big\{-1\leq y<0 \textnormal{ and } 1\leq x\leq \sqrt{2-a_{02}}\big\}$, if $1 \leq \sqrt{2-a_{02}}$, or $C=\emptyset$. 
Thus $\int_{\mathcal{A}\cup C}h\geq \int_{\widetilde{\mathcal{A}}}h$, for all positive function $h$ defined in $\R^2$, and hence $\int_{\mathcal{A}}h=\infty$ if $\int_{\widetilde{\mathcal{A}}}h=\infty$. 

As in the preceding subsection, we suppose there is a polynomial $q(x,y)=\sum_{i+j\leq k}b_{ij}x^iy^j$ such that $h(x,y)=\det D(p,q)(x,y)>0$, $\forall (x,y)\in\R^2$. Then we define $\tau=\max\{i-2j-3\ |\ b_{ij}\neq 0\}$ and \emph{we claim that $\tau<-1$}. Indeed calculating $\int_{\widetilde{\mathcal{A}}}h$ by applying the change of variables $(x,y)\mapsto \left(x,-x^2y\right)$, we get that
$$
\int_{\widetilde{\mathcal{A}}}h=\int_{0}^{m^2}\int_{\frac{m}{\sqrt{-b}}}^{\infty}\sum_{i+j\leq k}b_{ij}(-1)^{j+1}\big(\left(2(j-i)y+i\right)x^{i-2j-3}y^j +r_{ij}(x,y)
\big)dxdy,
$$
where $r_{ij}(x,y)=3ix^{i-2j-7}y^{j+2}-2a_{02}ix^{i-2j-5}y^{j+1}$. As in Subsection \ref{33333}, it follows that if $\tau \geq -1$, this integral is infinite, which is a contradiction with Proposition \ref{integral}, and the claim is proven.

By considering again $y=\theta x^{-2}$, we have that 
$$
\sum_{i+j\leq k}b_{ij}\big((2(j-i)\theta-i)\theta^{j}x^{i-2j-1}-2a_{02}\theta^{j+1}x^{i-2j-3}
-3i\theta^{j+2}x^{i-2j-5}\big)>0,
$$
for all $\theta,x\in\R$, with $x\neq 0$. Since $\tau<-1$, it follows in particular that the polynomial $L(\theta)$ defined in \eqref{ldetheta} is such that $L(\theta)\geq 0$ for all $\theta\in\R$. Therefore, by Lemma
\ref{bruna} bellow, we get that $L(\theta)$ is identically zero,  hence $b_{10}=0$, which is a contradiction with $-b_{10}=h(0,0)>0$. 
Thus we conclude that there is not a polynomial $q$ such that $\det D(p,q)>0$.

\begin{lemma}\label{bruna}
Given $N\in\N$ and $b_0, \dots, b_N\in\R$, let $L(\theta)$ be the polynomial 
\begin{equation*}
L(\theta)=\sum_{j=0}^{N}b_j\big(2(j+1)\theta +2j+1\big)\theta^j.
\end{equation*}
Then $L(\theta)$ is the zero polynomial or there exist $\theta_1,\theta_2\in \R$ such that $L(\theta_1)<0<L(\theta_2)$.
\end{lemma}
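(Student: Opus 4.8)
The plan is to reduce everything to the single polynomial $P(\theta)=\sum_{j=0}^{N}b_j\theta^j$ and then recognize $L$, after multiplication by a well-chosen integrating factor, as an exact derivative on a bounded interval. First I would rearrange the given expression. Since $\sum_{j}(2j+1)b_j\theta^j=2\theta P'(\theta)+P(\theta)$ and $\sum_{j}2(j+1)b_j\theta^{\,j+1}=2\theta\big(\theta P'(\theta)+P(\theta)\big)$, a direct computation gives the compact identity
$$
L(\theta)=2\theta(\theta+1)P'(\theta)+(2\theta+1)P(\theta).
$$

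The key observation is that the interval $(-1,0)$, on which $\theta(\theta+1)<0$, is perfectly adapted to this form. Setting $\sigma(\theta)=-\theta(\theta+1)$, so that $\sigma(\theta)>0$ on $(-1,0)$ and $\sigma'(\theta)=-(2\theta+1)$, one checks that
$$
2\sqrt{\sigma(\theta)}\,\frac{d}{d\theta}\Big[\sqrt{\sigma(\theta)}\,P(\theta)\Big]=\sigma'(\theta)P(\theta)+2\sigma(\theta)P'(\theta)=-L(\theta).
$$
Hence the function $\Xi(\theta):=\sqrt{\sigma(\theta)}\,P(\theta)$, which is continuous on $[-1,0]$, satisfies $\Xi'(\theta)=-L(\theta)/\big(2\sqrt{\sigma(\theta)}\big)$ on $(-1,0)$.

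Now I would argue the dichotomy by contradiction. Suppose $L(\theta)\ge 0$ for every $\theta\in\R$, i.e.\ $L$ never takes a negative value. Then $\Xi'\le 0$ on $(-1,0)$, so $\Xi$ is non-increasing there; but $\sqrt{\sigma}\to 0$ at both endpoints forces $\Xi(-1^+)=\Xi(0^-)=0$, and a non-increasing function with equal zero endpoint values must vanish identically on $(-1,0)$. Since $\sqrt{\sigma}>0$ on $(-1,0)$, this gives $P\equiv 0$ on an interval, whence $P$, and therefore $L$, is the zero polynomial. Consequently, if $L$ is not the zero polynomial there must exist $\theta_1\in(-1,0)$ with $L(\theta_1)<0$. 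Replacing every $b_j$ by $-b_j$ turns $P$ into $-P$ and $L$ into $-L$ while preserving the form of the identity, so the same argument applied to $-L$ yields $\theta_2$ with $L(\theta_2)>0$. This produces the required $\theta_1,\theta_2$.

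The only genuine obstacle is discovering the compact identity and the correct multiplier: once $L$ is written as $2\theta(\theta+1)P'+(2\theta+1)P$, the pairing of $\theta(\theta+1)$ with its derivative $2\theta+1$ makes $\sqrt{\pm\,\theta(\theta+1)}$ the natural integrating factor, and the restriction to $(-1,0)$—the unique bounded interval on which this factor is real and vanishes at both ends—is exactly what makes the endpoint argument decisive. Everything else is a routine verification; in particular, no delicate control of the growth of $L$ at infinity is required, so the (also available) monotonicity of $\sqrt{\theta(\theta+1)}\,P$ on $(0,\infty)$ and on $(-\infty,-1)$ can be bypassed entirely.
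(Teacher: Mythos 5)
Your proof is correct, and it takes a genuinely different and considerably shorter route than the paper's. You observe that $L=2\theta(\theta+1)P'+(2\theta+1)P$ with $P(\theta)=\sum_j b_j\theta^j$ (both identities check out), introduce the integrating factor $\sqrt{-\theta(\theta+1)}$ on $(-1,0)$, and note that $\Xi=\sqrt{-\theta(\theta+1)}\,P$ is continuous on $[-1,0]$, vanishes at both endpoints, and is monotone whenever $L$ has constant sign on $(-1,0)$; this forces $\Xi\equiv 0$, hence $P\equiv 0$, hence $L\equiv 0$. The symmetry $b_j\mapsto -b_j$ then yields the full dichotomy. The paper instead argues algebraically: assuming $L\ge 0$, it has even degree and can be written as $g^2+h^2$; matching coefficients gives a recursion expressing each $b_j$ in terms of the coefficients of $g$ and $h$, and the top-degree relation becomes the vanishing of a quadratic form $K(a,c)$ in those coefficients, which is then shown to be positive definite by an intricate computation of Hankel-type leading principal minors (Lemma \ref{l1} and Corollary \ref{c1} of the appendix). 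Your argument buys a substantial saving: it bypasses the sum-of-two-squares decomposition and the entire determinant calculation, needs only $L\ge 0$ on the single interval $(-1,0)$ rather than on all of $\R$, and reduces the lemma to a two-line calculus observation. The paper's route, while much heavier, stays entirely within coefficient algebra. One cosmetic remark: your phrase ``a non-increasing function with equal zero endpoint values must vanish identically'' is exactly right here because $\Xi$ is continuous up to the closed interval $[-1,0]$ and is squeezed between its two endpoint limits; it is worth stating that squeeze explicitly, but there is no gap.
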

See the appendix for the proof of Lemma \ref{bruna}.

\section{Proof of Theorem \ref{main}}\label{fim}
Suppose we are under the hypotheses of Theorem \ref{main}. If all the level sets of $p$ are connected or the degree of $p$ is less than or equal to $3$, then $F$ is injective by the argument given in introduction or by the main result of \cite{BS}, respectively. 
Thus we can suppose that $p$ has least one disconnected level set and that the degree of $p$ is $4$. 

By Theorem \ref{Classification}, there exist $M,N\in\R$, $M\neq 0$, and an affine change of coordinates $T$ such that $\overline{p}=Mp\circ T^{-1} +N$ is one of the polynomials of this theorem. 
If we take $M'=\signal\left(\det DF\right)\det T^{-1}M$ and  $\overline{q}=M'p\circ T^{-1}$, we have that $\det D(\overline{p},\overline{q})=MM'\det DF \det T^{-1}>0$,  a contradiction with Theorem \ref{naoq}.

\appendix

\section{Proof of Lemma \ref{bruna}}

From now on, we consider the following  convention: given $P:\Z\to\R$,  
$$
\prod_{l=m}^n P(l)=1,\ \ \ \ \ \  \  \sum_{l=m}^nP(l)=0,
$$
if $n<m$. We also denote by $\det A$ the determinant of a quadratic matrix $A$.

\begin{lemma}\label{l1} Let $k\geq j\geq 1$ and $i\geq -1$ be integers. Define
\begin{equation*}
\alpha_i^k=(-2)^{i+1}\prod_{l=0}^i\frac{2k-l}{4k-(2l+1)},
\end{equation*}
and the $(j+1)\times(j+1)$ matrix
\begin{equation*}
H_j^k=\left(\begin{matrix}
\alpha_{2j-1}^k & \alpha_{2j-2}^k & \cdots & \alpha_j^k & \alpha_{j-1}^k\\
\alpha_{2j-2}^k & \alpha_{2j-3}^k & \cdots & \alpha_{j-1}^k & \alpha_{j-2}^k\\
\vdots & \vdots & \ddots & \vdots & \vdots\\
\alpha_j^k & \alpha_{j-1}^k & \cdots & \alpha_1^k & \alpha_0^k\\
\alpha_{j-1}^k & \alpha_{j-2}^k & \cdots & \alpha_0^k & 1
\end{matrix}\right).
\end{equation*}
Then the determinant of $H_j^k$ is positive.
\end{lemma}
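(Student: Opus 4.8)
The plan is to recognize $H_j^k$ as the Gram matrix of the monomials $1,x,\dots,x^j$ with respect to a suitable positive measure, so that its positive definiteness — and hence the positivity of its determinant — is automatic. The first step is to put $\alpha_i^k$ in closed form. Using $\prod_{l=0}^{i}(2k-l)=\Gamma(2k+1)/\Gamma(2k-i)$ together with $\prod_{l=0}^{i}(4k-2l-1)=2^{i+1}\Gamma(2k+1/2)/\Gamma(2k-i-1/2)$, the power $2^{i+1}$ cancels against $(-2)^{i+1}$ and one is left with
\[
\alpha_i^k=(-1)^{i+1}\,\frac{\Gamma(2k+1)}{\Gamma(2k+1/2)}\cdot\frac{\Gamma(2k-i-1/2)}{\Gamma(2k-i)}.
\]
In the relevant range $-1\le i\le 2j-1\le 2k-1$ all Gamma arguments are positive, and one checks that this formula also returns $\alpha_{-1}^k=1$, matching the bottom-right entry of $H_j^k$.

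Next I would produce an integral representation. Writing $C_k=\Gamma(2k+1)/\Gamma(2k+1/2)>0$ and applying the Beta integral $\Gamma(a)/\Gamma(a+1/2)=\pi^{-1/2}\int_0^1\tau^{a-1}(1-\tau)^{-1/2}\,d\tau$ with $a=2k-i-1/2$, one gets $\alpha_i^k=(-1)^{i+1}C_k\,\pi^{-1/2}\int_0^1\tau^{2k-i-3/2}(1-\tau)^{-1/2}\,d\tau$. The key structural observation is that $H_j^k$ is a \emph{Hankel} matrix: indexing rows and columns by $r,s\in\{0,\dots,j\}$, its $(r,s)$ entry is $\alpha_{2j-1-r-s}^k$, which depends only on $r+s$. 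Setting $m_t=\alpha_{2j-1-t}^k$ for $t=0,\dots,2j$ and substituting $i=2j-1-t$, the sign $(-1)^{i+1}$ collapses to $(-1)^t$, and after the change of variable $x=-\tau$ the factor $(-1)^t\tau^t$ becomes exactly $x^t$. This exhibits the moments as
\[
m_t=\int_{-1}^0 x^t\,d\mu(x),\qquad d\mu(x)=\frac{C_k}{\sqrt\pi}\,(-x)^{2k-2j-1/2}(1+x)^{-1/2}\,dx,
\]
a positive, absolutely continuous, finite measure supported on $(-1,0)$; convergence at the endpoints holds since $2k-2j-1/2>-1$ (because $k\ge j$) and $-1/2>-1$.

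With this in hand the conclusion is immediate. For all $r,s$ one has $(H_j^k)_{r,s}=m_{r+s}=\int_{-1}^0 x^r x^s\,d\mu=\langle x^r,x^s\rangle_{L^2(\mu)}$, so $H_j^k$ is precisely the Gram matrix of $1,x,\dots,x^j$ in $L^2(\mu)$. Since $\mu$ has a strictly positive density on the interval $(-1,0)$ its support is infinite, so these $j+1$ monomials are linearly independent in $L^2(\mu)$: a nonzero polynomial of degree at most $j$ vanishes at only finitely many points and cannot be null for $\mu$. A Gram matrix of linearly independent vectors is positive definite, and therefore $\det H_j^k>0$.

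I expect the main obstacle to be computational bookkeeping rather than any conceptual difficulty: correctly collapsing the two products into Gamma-ratios and, above all, tracking the alternating sign $(-1)^{i+1}$ through the index reversal $i=2j-1-t$ so that it recombines with the power of $\tau$ into a genuine monomial $x^t$ integrated against a \emph{positive} measure. Once the closed form and the Beta integral are secured, positivity of $\mu$ forces positivity of the determinant and no delicate estimate is required.
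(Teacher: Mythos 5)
Your proof is correct, and it takes a genuinely different route from the paper. The paper's argument is purely elementary linear algebra: it rescales rows and columns to reduce $\det H_j^k$ to the determinant of a matrix whose entries are products of the integers $m_l=2k-l$ and $n_l=4k-(2l+1)$, then triangularizes it by successive row and column subtractions (each step verified by induction) and counts that the diagonal is a product of $j(j+1)$ negative factors. You instead exploit the Hankel structure: collapsing the two products into the Gamma-ratio $\alpha_i^k=(-1)^{i+1}\frac{\Gamma(2k+1)}{\Gamma(2k+1/2)}\frac{\Gamma(2k-i-1/2)}{\Gamma(2k-i)}$ (which indeed returns $\alpha_{-1}^k=1$), using the Beta integral, and tracking the sign through $i=2j-1-t$ correctly exhibits the entries as moments $m_t=\int_{-1}^0 x^t\,d\mu$ of a finite positive measure with strictly positive density on $(-1,0)$ — the integrability conditions $2k-2j-1/2>-1$ and $-1/2>-1$ do hold under $k\ge j$. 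The Gram-matrix conclusion is then immediate. Your approach buys more than the paper's: it establishes positive definiteness of $H_j^k$, not merely $\det H_j^k>0$, so it would also deliver Corollary \ref{c1} (positivity of all leading principal minors) by Sylvester's criterion, without the recursion $\alpha_i^k=\alpha_1^k\alpha_{i-2}^{k-1}$ the paper uses there. The trade-off is that you import special-function identities where the paper stays with bare determinant manipulations; both proofs are complete and valid.
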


\begin{proof}
For each integer  $l=0,\ldots,2j-1$, denote
$$
m_l=2k-l,\ \ \ \ \ \ \ n_l=4k-(2l+1).
$$
Denote also $\left(h_{r,s}\right)_{m\times n}$ the $m\times n$ matrix whose element in the $r$th row and $s$th column is $h_{r,s}$. 
Then $H_j^k=\left(h_{r,s}\right)_{(j+1)\times (j+1)}$, where 
\begin{align*}
h_{r,s} &=\alpha_{2j-r-s+1}^k
=(-2)^{2j-r-s+2}\prod_{l=0}^{2j-r-s+1}\frac{m_l}{n_l}\\
&=(-2)^{j-r+1}(-2)^{j-s+1}\prod_{l=0}^{j-s}\frac{m_l}{n_l}
\prod_{l=j-s+1}^{2j-r-s+1}m_l\prod_{l=j-s+1}^{2j-s}\frac{1}{n_l}\prod_{l=2j-r-s+2}^{2j-s}n_l.
\end{align*}
For each $r$, divide the row $r$ by $(-2)^{j-r+1}$ and for each $s$, divide the column $s$ by the factors above depending only on $s$. After that, we obtain that $\det H_j^k=M\det (a_{r,s})_{(j+1)\times (j+1)}$, where
$$
M=\prod_{r=1}^{j+1}(-2)^{j-r+1} \prod_{s=1}^{j+1}\left( (-2)^{j-s+1}\prod_{l=0}^{j-s}\frac{m_l}{n_l}\prod_{l=j-s+1}^{2j-s}\frac{1}{n_l}\right)
$$
and
$$
a_{r,s}=\prod_{l=j-s+1}^{2j-r-s+1}m_l\prod_{l=2j-r-s+2}^{2j-s}n_l. 
$$
Since $1\leq j\leq k$, we have that $m_l,n_l>0$ and it follows that $M$ is positive. Thus it is enough to prove that $\det (a_{r,s})_{(j+1)\times (j+1)}$ is positive. 

To prove this we will make operations with the rows and the columns of the matrix.

We first operate the rows. 
We denote $a^0_{r,s}=a_{r,s}$ for each $r,s$. 
Then for each $t=1,\dots, j$, we consider $b=t+1,\ldots,j+1$ and replace the row $b$ by this row minus two times the row $b-1$ and denote by $a_{r,s}^t$ the elements of this new matrix. That is, for each $t$,
\begin{equation*}
a_{r,s}^t=\left\{\begin{array}{ll} 
a_{r,s}^{t-1}-2a^{t-1}_{r-1,s}, & r>t\\
a_{r,s}^{t-1}, & r\leq t. 
\end{array}\right.
\end{equation*}
It is not difficult to prove by induction on $t$ that
$$
a_{r,s}^t=\prod_{l=0}^{t-1}(-2l-1)\prod_{l=j-s+1}^{2j-s-r+1}m_l\prod_{l=2j-s-r+(t+2)}^{2j-s}n_l,\ \ \ \ \ r>t.
$$
In the last step $t=j$, we obtain the matrix $\left(a_{r,s}^j\right)_{(j+1)\times(j+1)}$, with
$$
a_{r,s}^j=a_{r,s}^{r-1}=\prod_{l=0}^{r-2}(-2l-1)\prod_{l=j-s+1}^{2j-s-r+1}m_l
$$
since the products of the $n_l$ are equal to $1$ when $t=r-1$.

We now operate the columns of this modified  matrix. Denoting again $a_{r,s}=a_{r,s}^{j}$, for each $t=1,\dots,j$ and for each $b=1,\ldots,j+1-t$, replace the column $b$ by the column $b$ minus the column $b+1$. Denoting by $a_{r,s}^t$ the elements of this new matrix, we have
\begin{equation*}
a_{r,s}^t= \left\{\begin{array}{ll}
a_{r,s}^{t-1}-a_{r,s+1}^{t-1}, & s\leq j+1-t\\
a_{r,s}^{t-1}, & s>j+1-t.
\end{array}\right.
\end{equation*} 
It is not difficult to show by induction on $t$ that
$$
a_{r,s}^t=\prod_{l=1}^{t}(r-j+(l-2))\prod_{l=0}^{r-2}(-2l-1)\prod_{l=j-s+1}^{2j-s-r-(t-1)}m_l,\ \ \ \ \ \ s\leq j+1-t.
$$
In the last step $t=j$, we have the matrix $\left(a_{r,s}^j\right)_{(j+1)\times (j+1)}$, with
$$
a_{r,s}^j=a_{r,s}^{j+1-s}=\prod_{l=1}^{j+1-s}(l-(j-r+1)-1)
\prod_{l=0}^{r-2}(-2l-1)\prod_{l=j+1-s}^{j-r}m_l.
$$
If $r>s$, $l=j-r+2$ will annihilate the product on the first factor of $a_{r,s}^j$ and hence $\left(a_{r,s}^j\right)_{(j+1)\times(j+1)}$ is a triangular matrix. Moreover, if $s=r$, we have 
$$
a_{r,r}^j=\prod_{l=1}^{j+1-r}\left(l-(j+1-r)-1\right) 
\prod_{l=0}^{r-2}(-2l-1),
$$
which is the product of $j$ \emph{negative} factors. Thus $\prod_{r=1}^{j+1}a_{r,r}^j$ is the product of $j(j+1)$ negative factors.
Hence $\det H_j^k$ is positive.
\end{proof}

\begin{corollary}\label{c1}
The matrix of Lemma \ref{l1} has all its leading principal minors positive.
\end{corollary}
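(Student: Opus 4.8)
The plan is to deduce Corollary \ref{c1} from Lemma \ref{l1} by exploiting the Hankel (hence symmetric) structure of $H_j^k$ together with Sylvester's criterion for positive definiteness. The crucial observation I would record first is that the \emph{trailing} principal submatrices of $H_j^k$ again belong to the same family. Indeed, since the $(r,s)$-entry of $H_j^k$ equals $\alpha_{2j-r-s+1}^k$, restricting to the rows and columns $r,s\in\{j+2-m,\dots,j+1\}$ and reindexing by $r'=r-(j+1-m)$ and $s'=s-(j+1-m)$ turns that entry into $\alpha_{2(m-1)-r'-s'+1}^k$. In other words, the bottom-right $m\times m$ principal submatrix of $H_j^k$ is exactly $H_{m-1}^k$, for every $m=1,\dots,j+1$, the degenerate case $m=1$ being the $1\times 1$ block $[\alpha_{-1}^k]=[1]$ of determinant $1$.

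With this identification the trailing principal minors are immediately under control: for $2\le m\le j+1$ we have $1\le m-1\le j\le k$, so Lemma \ref{l1} applies and yields $\det H_{m-1}^k>0$, while the $m=1$ block has determinant $1$. Hence \emph{every} trailing principal minor of $H_j^k$ is positive.

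It then remains to pass from the trailing minors to the leading ones, and here I would use the reversal trick on the real symmetric matrix $H_j^k$. Conjugating by the anti-diagonal permutation matrix $J$ (so that $J=J^{-1}=J^{T}$) produces the symmetric matrix $\widetilde H=J H_j^k J$ whose leading $m\times m$ minor coincides with the trailing $m\times m$ minor of $H_j^k$: the leading $m\times m$ block of $\widetilde H$ is the trailing $m\times m$ block of $H_j^k$ with its rows and columns reversed, and the two reversals contribute equal determinantal signs that cancel. Thus all leading principal minors of $\widetilde H$ are positive, so $\widetilde H$ is positive definite by Sylvester's criterion; since $J$ is orthogonal, $H_j^k=J\widetilde H J$ is positive definite as well, and therefore all of its leading principal minors are positive, which is the claim.

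The only genuine subtlety — and the step I would flag as the potential pitfall — is that one cannot apply Lemma \ref{l1} to the \emph{leading} principal submatrices directly: those begin with the entry $\alpha_{2j-1}^k$ and are \emph{not} of the form $H_{j'}^{k'}$. The whole point of the argument is to recognize the nested family $[1],H_1^k,\dots,H_j^k$ sitting in the bottom-right corner, establish positivity of the trailing minors there, and only then transfer positivity to the leading minors through symmetry and positive definiteness. Everything else is routine bookkeeping with the index $2j-r-s+1$.
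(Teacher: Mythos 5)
Your argument is correct, but it reaches the conclusion by a genuinely different route from the paper. You observe that the \emph{trailing} $m\times m$ principal submatrices of $H_j^k$ are exactly $H_{m-1}^k$ (your index computation $\alpha_{2j-r-s+1}^k\mapsto\alpha_{2(m-1)-r'-s'+1}^k$ checks out, and $m-1\le j\le k$ keeps you inside the hypotheses of Lemma \ref{l1}), then transfer positivity from trailing to leading minors via the anti-diagonal conjugation $\widetilde H=JH_j^kJ$, Sylvester's criterion, and the fact that congruence by the orthogonal matrix $J$ preserves positive definiteness. The paper instead works on the leading minors directly: it uses the multiplicative identity $\alpha_i^k=\alpha_1^k\,\alpha_{i-2}^{k-1}$ to peel a positive scalar factor off every entry of a leading block, turning the leading $m\times m$ minor of $H_j^k$ into a positive multiple of the \emph{full} determinant $\det H_{j'}^{k'}$ with $j'\le k'$, to which Lemma \ref{l1} applies. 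The paper's route is more elementary (only multilinearity of the determinant and an identity among the $\alpha$'s), while yours needs no relation between the $\alpha$'s at all and, as a bonus, proves the stronger statement that $H_j^k$ is positive definite, hence that \emph{all} its principal minors are positive — more than Corollary \ref{c1} asserts, and ample for the Hessian argument in the proof of Lemma \ref{bruna}.
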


\begin{proof}
Given $i\geq 1$ and $k\geq 2$, it is simple to see that 
$$
\alpha_i^k=\alpha_1^k\alpha_{i-2}^{k-1}.
$$
Since $\alpha_1^r>0$, for all $r\geq 1$,  we have that the leading principal minors of $H_j^k$ are positive multiples of  $\det H_{j'}^{k'}$, with $j'\leq k'$, which are positive by Lemma \ref{l1}. 

\end{proof}

\begin{proof}[Proof of Lemma \ref{bruna}]
If $L(\theta)$ is   a positive polynomial, i.e. $L(\theta)\geq 0$, for all $\theta\in\R$, then the degree of $L$ is even, say $N=2k-1$. Besides it is well known that there are polynomials
$$
g(\theta)=\sum_{j=0}^k a_j\theta^j,\ \ \ \ \ \ \  h(\theta)=\sum_{j=0}^kc_j\theta^j
$$
such that
\begin{equation}\label{90000}
L=g^2+h^2.
\end{equation}
That is,
\begin{equation}\label{e1}
b_0+\sum_{j=1}^{2k-1}\Big(2jb_{j-1}+(2j+1)b_j\Big)\theta^j \hspace{-.06cm} + \hspace{-.06cm} 4kb_{2k-1}
\theta^{2k}=\sum_{j=0}^{2k}\left(\sum_{r+s=j} \hspace{-.2cm} a_ra_s \hspace{-.1cm}+\hspace{-.1cm}c_rc_s\hspace{-.1cm}\right)\theta^j .
\end{equation}

It follows from \eqref{e1} that 
\begin{equation}\label{eq3}
b_0=a_0^2+c_0^2, 
\end{equation}
\begin{equation}\label{eq4}
2jb_{j-1} +(2j+1)b_j=\sum_{r+s=j}a_ra_s+c_rc_s,  \ \ \ \ j=1,..,2k-1, 
\end{equation}
and
\begin{equation}\label{eq5}
4kb_{2k-1}=a_k^2+c_k^2.
\end{equation}

\emph{We assert that if we use only \eqref{eq3} and \eqref{eq4}, then  for each $j=0,\dots,2k-1$,
\begin{equation}\label{e2}
b_j=\frac{1}{j+1}\sum_{l=0}^j(-2)^l\prod_{i=0}^l\frac{j-(i-1)}{2(j-i)+1}\sum_{r+s=j-l}a_ra_s+c_rc_s.
\end{equation}}
Indeed, \eqref{eq3} shows \eqref{e2} for $j=0$. We assume \ref{e2} is true for $j$. 
Using \eqref{eq4} to write $b_{j+1}$ in terms of $b_j$, after elementary and long calculations we can prove that \eqref{e2} is true for $j+1$,
and the assertion is proven.

Denoting $a=(a_0,a_1,\ldots, a_k)$ and $c=(c_0,c_1, \ldots, c_k)$, we define $K(a,c)=-4kb_{2k-1}+a_k^2+c_k^2$. 
By \eqref{eq5}, $K(a,c)=0$, whereas by  \eqref{e2},
\begin{equation*}
K(a,c)=\sum_{l=0}^{2k-1}(-2)^{l+1}\prod_{i=0}^l\frac{2k-i}{4k-(2i+1)}\sum_{r+s=2k-1-l}(a_ra_s+c_rc_s)+a_k^2+c_k^2.
\end{equation*}

Now considering $(a,c)$ as variables in $\R^{2(k+1)}$, last equation shows that  $K(a,c)$ is a quadratic form in $\R^{2(k+1)}$. \emph{We assert that $K(a,c)$ is a positive definite quadratic form}. 
This guarantees that $(a,c)=(0,0)$ provided that $K(a,c)=0$. Thus $g$ and $h$ are identically zero, and by \eqref{90000}, $L$ is identically zero. 
Therefore if $L(\theta)\neq 0$, we have shown that there exists $\theta_1$ such that $L(\theta_1)<0$. 
To conclude the proof, we just apply this result to the polynomial 
$-L(\theta)$.

In order to show the assertion, we first observe that it is enough to prove that $K(a,0)$ is positive definite, since $K(a,c)=K(a,0)+K(0,c)$ and $K(a,0)=K(0,a)$. 
Now to verify that $K(a,0)$ is positive definite, we shall show that  $\frac{1}{2}\Hess(K)$, where $\Hess(K)=\left(\partial^2 K/\partial a_i\partial a_j\right)_{(k+1)\times (k+1)}$ is the Hessian matrix of $K$, has all its leading principal minors positive. 
It is simple to show that for each $r,s\in\{0,1,\ldots, k\}$, 
$$
\frac{1}{2}\frac{\partial^2K}{\partial a_r\partial a_s} =
 (-2)^{2k-r-s}\prod_{l=0}^{2k-r-s-1}\frac{2k-l}{4k-(2l+1)}.
$$
If we denote  
$$
\alpha_i^k=\frac{1}{2}\frac{\partial^2K}{\partial a_{r}\partial a_{s}},
$$
with $r+s=2k-i-1$, then $\frac{1}{2}Hess (K)=H_k^k$, where $H_k^k$ is the matrix of Lemma \ref{l1} with $j=k$. 
Thus by Corollary \ref{c1}, all the leading principal minors of $\frac{1}{2}Hess (K)$ are positive.

\end{proof}




\end{document}